\documentclass[a4paper,10pt]{amsart}

\pdfoutput=1

\usepackage[text={420pt,660pt},centering]{geometry}
 \usepackage{cancel}
\usepackage{esint,amssymb} 
\usepackage{graphicx}

\usepackage{MnSymbol}
\usepackage{mathtools}
\usepackage[colorlinks=true, pdfstartview=FitV, linkcolor=blue, citecolor=blue, urlcolor=blue,pagebackref=false]{hyperref}
\usepackage{bm}
\usepackage{mathrsfs}
\usepackage{xcolor}
\usepackage{accents} \usepackage{enumitem}
\setlist{leftmargin=0.5\leftmargin}

\definecolor{darkgreen}{rgb}{0,0.5,0}
\definecolor{darkblue}{rgb}{0,0,0.7}
\definecolor{darkred}{rgb}{0.9,0.1,0.1}

\newtheorem{proposition}{Proposition}
\newtheorem{theorem}[proposition]{Theorem}
\newtheorem{lemma}[proposition]{Lemma}

\theoremstyle{remark}
\newtheorem{remark}[proposition]{Remark}

\theoremstyle{definition}

\numberwithin{equation}{section}
\numberwithin{proposition}{section}
\numberwithin{figure}{section}
\numberwithin{table}{section}

\newcommand{\Z}{\mathbb{Z}}
\newcommand{\N}{\mathbb{N}}

\newcommand{\R}{\mathbb{R}}

\newcommand{\E}{\mathbb{E}}

\newcommand{\eps}{\varepsilon}

\renewcommand{\le}{\leqslant}
\renewcommand{\ge}{\geqslant}
\renewcommand{\leq}{\leqslant}
\renewcommand{\geq}{\geqslant}

\renewcommand{\subset}{\subseteq}

\renewcommand{\hat}{\widehat}

\newcommand{\Ll}{\left}
\newcommand{\Rr}{\right}
\renewcommand{\d}{\mathrm{d}}

\newcommand{\1}{\mathbf{1}}
\newcommand{\rz}{z}
\newcommand{\mcl}{\mathcal}

\DeclareMathOperator{\supp}{supp}

\newcommand{\la}{\left\langle}
\newcommand{\ra}{\right\rangle}

\renewcommand{\S}{\mathsf{S}}

\newcommand{\comple}{{\complement}}

\newcommand{\one}{\mathbf{1}}

\newcommand{\lpcfg}{\mcl L}
\newcommand{\lo}{\ell}

\newcommand{\itr}{\mathrm{int}}
\newcommand{\llbracket}{[\!\![}
\newcommand{\rrbracket}{] \!\! ]}
\newcommand{\pa}{\mathsf{P}}
\newcommand{\rcE}{\phi}
\newcommand{\rcP}{\phi}
  \newcommand{\bc}{\mathbf{c}}
\newcommand{\calL}{\mathcal{L}}
\newcommand{\dst}{d} \newcommand{\crit}{\mathrm{c}}

\begin{document}

\author[H.-B. Chen]{Hong-Bin Chen}
\address[Hong-Bin Chen]{NYU-ECNU Institute of Mathematical Sciences, NYU Shanghai, Shanghai, China}

\author[J. Xia]{Jiaming Xia}
\address[Jiaming Xia]{Shanghai Institute for Mathematics and Interdisciplinary Sciences, Shanghai, China}

\keywords{}
\subjclass[2010]{}
\date{\today}

\title[Multi-point functions of a fuzzy $4$-Potts model]{Multi-point functions of a full-plane two-state\\ fuzzy $4$-Potts model}

\begin{abstract}
We study the full-plane two-state fuzzy $4$-Potts model, obtained by assigning independent balanced $\{\pm1\}$ spins to the open clusters of a critical $q=4$ random-cluster configuration. This model corresponds exactly to the single-spin projection of the isotropic Ashkin--Teller model at its Potts point. We prove that, after proper normalization, all even multi-point spin correlation functions converge to explicit conformally covariant Coulomb-gas type neutral charge sums. As a consequence, we prove convergence in law of the rescaled magnetization field and identify the moments of the limiting field. The proof combines the Baxter--Kelland--Wu coupling, convergence of the six-vertex height function to the Gaussian free field, and a charge-completion mechanism: an enlarged discrete sum over charge assignments with total charge in $4\mathbb Z$ produces a combinatorial cancellation of connection patterns, while only the neutral charge sector survives in the scaling limit.
\end{abstract}

\maketitle

\section{Background and motivation}

An object of primary interest in two-dimensional lattice models and continuum statistical mechanics is the collection of \emph{multi-point correlation functions}. These functions encode the joint behavior of local observables, such as spins or cluster connectivities, at $n$ prescribed locations. Their scaling limits are expected to describe the operator algebraic structure and universal symmetries predicted by conformal field theory (CFT). Even for classical integrable models, obtaining explicit formulas for general multi-point functions is notoriously difficult. For instance, in the planar Ising model, while the two-point function is classical, deriving exact multi-point functions requires sophisticated techniques from discrete complex analysis and Pfaffian identities \cite{Onsager44, WuMcCoyTracyBarouch76, Smirnov10, CHI2015, chelkak2021correlations}. In other paradigms, including Potts models, random cluster models, and percolation, the full multi-point structure is typically known only partially through scaling exponents or asymptotic descriptions from SLE/CLE theory \cite{LSW2004, Beffara08, MillerSheffield16I, MillerSheffield16II, MillerSheffield16III}; truly \emph{explicit formulas} for these models remain exceedingly rare.

Recent breakthroughs illustrate both the depth and the scarcity of such results. For critical percolation on the triangular lattice, it was recently established that multi-point functions of an associated spin field possess conformally covariant scaling limits, identifying logarithmic singularities in the process \cite{camia2024conformal, camia2024conformally, camia2024logarithmic}. These results rely crucially on Smirnov’s proof of conformal invariance for critical site percolation and the CLE$_6$ description of the cluster boundaries. For the FK--Ising model ($q=2$), renormalized multi-point connectivity probabilities have been shown to converge to conformally covariant limits in simply connected domains \cite{camia2025}, complementing classical work on Ising correlations \cite{CHI2015} and their extension to all primary fields \cite{chelkak2021correlations}.

In this paper, we consider a two-state fuzzy $4$-Potts model on $\mathbb{Z}^2$. This model is a specific case of fuzzy Potts models, which belong to the broader class of ``divide-and-color'' models. These models are defined by first sampling a random cluster configuration and then independently assigning colors or spins to each cluster. There has been renewed interest in this area. For example, recent work has determined arm exponents for continuum fuzzy Potts models \cite{kohler2025fuzzy} and identified the exact bulk one-arm exponent for colored CLE \cite{liu2024bulk}. Furthermore, an explicit formula for the three-point connectivity constant was recently obtained in the continuum fuzzy Potts setting \cite{cai2025three}, providing a rare closed-form result for a non-trivial multi-point observable.

The specific model we study here can be described as follows: we sample a critical $q=4$ random cluster configuration and assign an independent spin $\S \in \{\pm 1\}$ to each open cluster. Our analysis relies on the Baxter--Kelland--Wu coupling, which connects random cluster models to the six-vertex model. A central input for our proof is a recent result from \cite{GFF6vertex}, which establishes the convergence of the six-vertex height function to the Gaussian free field (GFF). Specifically, for the isotropic six-vertex model with weights $a=b=1$ and $c \in [\sqrt{3}, 2]$ (corresponding to a spectral parameter $\Delta \in [-1, -1/2]$), the associated height function $h$ converges in the scaling limit to a properly scaled full-plane GFF.

By leveraging this GFF convergence, the Baxter--Kelland--Wu coupling, and the fact that the spin correlations of the fuzzy Potts field reduce to cluster connectivity events, we rigorously identify the scaling limit of these even $n$-point functions: after normalization by $(\delta^{\frac{1}{8}+o(1)})^{-n}$, they converge to an explicit algebraic expression (see Theorem~\ref{t}):
\[
\sum_{\substack{\xi\in\{\pm1\}^n\\ \sum\xi_i=0}} \ \prod_{1\le i<j\le n} |x_i-x_j|^{\xi_i\xi_j/4}.
\]
We then use this result to obtain convergence of the magnetization field (see Theorem~\ref{t.Phi}).

The structure of this formula mirrors the Coulomb gas description of correlation functions in $c=1$ conformal field theories \cite{dotsenko1984conformal}. In this formalism, multi-point functions of vertex operators are decomposed into sums over charge assignments that satisfy a global neutrality condition. Each assignment contributes a product of pairwise factors of the form $|x_i - x_j|^{\alpha_i \alpha_j}$. The expression precisely reflects this structure, with the signs $\xi_i$ acting as discrete ``charges''.

A key point in the proof is a charge-completion and cancellation mechanism (see Lemma~\ref{l.cancel}). Although the limiting Coulomb-gas formula only contains neutral charge assignments, at the discrete level we sum over all assignments with total charge in $4\Z$. This enlarged charge sum cancels all connection patterns with non-constant primal/dual types, leaving exactly the even FK connection patterns that contribute to the fuzzy spin correlations. The non-neutral charge sectors then vanish in the scaling limit, so only the neutral Coulomb-gas terms remain.

The model considered here also corresponds exactly to the Ashkin--Teller model at its isotropic Potts point (see Remark~\ref{r.AT}). Consequently, the fuzzy spin $\S$ can be identified with the projection of the two-component Ashkin--Teller spin $(\sigma, \tau)$ onto a single factor. In the physics literature, the multi-point correlation functions of this individual spin have been actively studied along the continuous Ashkin--Teller critical line, most notably through exact lattice mappings \cite{kadanoff1979correlation} and the conformal bootstrap of the four-point function \cite{zamolodchikov1985two}. Thus, our main result can also be viewed as a rigorous derivation of the multi-point function for this projected Ashkin--Teller spin.

\section{Setting and main results}

Given a finite graph $G=(V,E)$ on $\Z^2$, a percolation configuration $\omega$ on $G$ is an element of $\{0,1\}^E$. An edge $e\in E$ is called \emph{open} if $\omega_e=1$ and \emph{closed} otherwise. A configuration $\omega$ can be seen as a subgraph of $G$ with vertex-set $V$ and edge-set $\{e\in E:\omega_e=1\}$. The \emph{random-cluster measure} on $G$ with edge-weight $p\in(0,1)$, cluster-weight $q>0$, and free boundary condition is defined by
\begin{equation*}
    \rcP^{\mathrm{free}}_{G,p,q}[\omega]
    =\frac{1}{Z^{\mathrm{free}}_{G,p,q}} \left(\frac{p}{1-p}\right)^{|\omega|}q^{k\left(\omega\right)},
\end{equation*}
where $|\omega|=\sum_{e\in E}\omega_e$ is the number of open edges, $k(\omega)$ is the number of connected components of the graph induced by $\omega$, and $Z^{\mathrm{free}}_{G,p,q}$ is the normalizing factor.

Here, we focus on the case $q=4$ and omit it from the notation. Let $\rcP_{\mathbb Z^2,p}$ be the infinite-volume limit of the measures $\rcP^{\mathrm{free}}_{G,p}$ by letting $G$ tend to $\mathbb Z^2$. 
The model undergoes a phase transition at
$ p_\mathrm{c}=\inf\{p\in[0,1]:\rcP_{\mathbb Z^2,p}[0\leftrightarrow \infty]>0\}$.
It was proved in \cite{beffara2012self} that $p_\mathrm{c}=2/3$ and in \cite{DumSidTas13} that the phase transition is continuous. In this paper, we consider the model at its critical point and write $\phi_{\Z^2}=\phi_{\Z^2,p_\mathrm{c}}$ henceforth. For each $\delta>0$, we denote by $\phi_{\delta\Z^2}$ the corresponding measure on the rescaled lattice $\delta\Z^2$.

Given a configuration $\omega$ sampled from $\phi_{\delta\Z^2}$, let $\mathfrak{C}$ be the collection of clusters $\mathcal{C}$ of vertices connected by open edges. Then, sample i.i.d.\ $\{\pm1\}$-valued balanced Rademacher random variables $(\boldsymbol{\sigma}_\mathcal{C})_{\mathcal{C}\in\mathfrak{C}}$. For each $z\in\delta\Z^2$, we define
\begin{align}\label{e.S=}
    \S_z = \boldsymbol{\sigma}_\mathcal{C}\quad\text{if}\quad z\in \mathcal{C}\ \text{ and }\ \mathcal{C}\in \mathfrak{C}.
\end{align}
We denote by $\E_\delta$ the expectation with respect to $\S$ on $\delta\Z^2$.
Notice that this is analogous to the Edwards--Sokal coupling; the difference is that instead of $4$-Potts spins we are assigning Ising spins.

This model $\S$ is a type of fuzzy Potts model, which further belongs to the family of ``divide and color'' models. One can think of $\S$ as mapping two colors in the $4$-Potts model to $+1$ and the other two to $-1$. A similar two-color spin model is also used in~\cite[(1.8)]{camia2024conformal} where the underlying $\omega$ is the critical percolation on the triangular lattice.

\begin{remark}\label{r.AT}
The Ashkin--Teller model on a finite graph $G=(V,E)$ is a spin system where at each vertex $i\in V$, we place a pair of Ising spins $(\sigma_i,\tau_i)\in\{\pm1\}^2$. Its Hamiltonian is given by
\begin{equation*}
H(\sigma,\tau)
=
-\sum_{ij\in E}
\Big(
J_\sigma\,\sigma_i\sigma_j
+
J_\tau\,\tau_i\tau_j
+
J_{\sigma\tau}\,\sigma_i\sigma_j\tau_i\tau_j
\Big),
\end{equation*}
where $J_\sigma,J_\tau,J_{\sigma\tau}$ are coupling constants.
At the Potts point $J_\sigma = J_\tau = J_{\sigma\tau} = \frac{1}{4}\log\frac{1}{1-p}$, the Ashkin--Teller model is equivalent to the $4$-Potts model by mapping $\{\pm1\}^2$ to the four colors.
Since we work with the critical model, we take $p=p_\mathrm{c}=2/3$.
Thus, the spin $\S$ corresponds exactly to projecting the Ashkin--Teller spin $(\sigma,\tau)$ onto one of its components (e.g., $\sigma$).
\end{remark}

For every $z\in \R^2$ and $\delta>0$, let $z_\delta$ be the closest point on $\delta\Z^2$ (if there is more than one nearest point, we choose one according to a fixed order).
Throughout, we denote by $|\,\cdot\,|$ the Euclidean norm on $\R^2$ and we set $B_r(z) = \Ll\{z'\in\R^2:\:|z'-z|\leq r\Rr\}$ for $z\in\R^2$ and $r\geq0$.

If $n$ is odd, then we have $\E_\delta\Ll[ \S_{(x_1)_\delta} \cdots \S_{(x_n)_\delta} \Rr] = 0$, since there must exist a cluster $\mathcal{C}$ containing an odd number of the vertices $(x_1)_\delta, \dots, (x_n)_\delta$. In this case, the average of $\prod_{(x_i)_\delta \in \mathcal{C}} \S_{(x_i)_\delta} = \boldsymbol{\sigma}_{\mathcal{C}}$ vanishes. If $n$ is even, a similar argument shows that $\E_\delta\Ll[ \S_{(x_1)_\delta} \cdots \S_{(x_n)_\delta} \Rr]$ equals the probability that $(x_1)_\delta, \dots, (x_n)_\delta$ are partitioned into distinct clusters, each containing an even number of these vertices. This event is determined solely by the configuration $\omega$, which allows us to apply the relevant tools. 

The limit of the two-point function has been computed in~\cite[Proposition~1.3]{qequal4} and we have the following scaling:
\begin{align}\label{e.varrho=}
    \varrho(\delta):=\frac{1}{2}\E_\delta \Ll[\S_{(0,0)_\delta}\S_{(1,0)_{\delta}}\Rr]=\delta^{\frac{1}{4}+o(\delta)},\qquad\text{as $\delta\to0$.}
\end{align}
The error term $o(\delta)$ is not precise enough to capture the logarithmic correction predicted for the $4$-Potts model in the physics literature. The expected behavior is $\varrho(\delta)^\frac{1}{2}\asymp \delta^{\frac{1}{8}}(-\log \delta)^{-\frac{1}{16}}$, where the leading order was identified in~\cite{den1983extended} and the logarithmic correction in~\cite{cardy1980scaling,salas1997logarithmic}.
Our main result is as follows. 
Throughout, we take $\N=\{1,2,\dots\}$ excluding $0$.

\begin{theorem}\label{t}
For every $n\in 2\N$ and distinct points $(x_1,\cdots,x_n)\in(\R^2)^n$, we have
\begin{equation}\label{e.t}
    \lim_{\delta\rightarrow 0} \varrho(\delta)^{-\frac{n}{2}}\E_\delta\Ll[ \S_{(x_1)_\delta} \cdots \S_{(x_n)_\delta} \Rr] = \sum_{\substack{\xi\in\{-1,+1\}^n\\\sum_{i=1}^n\xi_i=0}}\prod_{1\leq i<j\leq n}|x_i-x_j|^{\frac{\xi_i\xi_j}{4}}.
\end{equation}
Moreover, the convergence is uniform over every compact subset of $\{(x_1,\dots,x_n)\in(\R^2)^n:\: x_i\neq x_j\ \text{for all $i\neq j$}\}$.
\end{theorem}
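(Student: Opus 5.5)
The plan is to express the fuzzy spin correlation as an observable of the six-vertex height function via the Baxter--Kelland--Wu coupling, and then pass to the limit using the GFF convergence of \cite{GFF6vertex}. As noted before the theorem, $\E_\delta[\S_{(x_1)_\delta}\cdots\S_{(x_n)_\delta}]$ equals the $\phi_{\delta\Z^2}$-probability that the points are split into clusters each containing an even number of them. At $q=4$ the BKW coupling orients the loops of the loop representation independently. The loop weight $\sqrt q=2$ is then realized by the local turning weights $e^{\pm i\lambda\cdot\text{turning}}$ with $2\cos(2\pi\lambda)=2$, so that $\lambda=0$ in this degenerate case; correspondingly $c=2$ and $\Delta=-1$, which lies in the admissible range of \cite{GFF6vertex}. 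Each oriented loop surrounding a face $u$ changes the height $h(u)$ by $\pm1$ relative to infinity.

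\textbf{Step 1: discrete identity.} I would introduce a charged vertex observable near each $x_i$: a face $u_i$ adjacent to $(x_i)_\delta$ and the quantity $\E[e^{i\frac{\pi}{2}\sum_i\xi_i h(u_i)}]$. The charge $\pi/2$ is chosen because $e^{i\pi h/2}$ detects loop parity modulo $4$ and because, in the GFF limit, it produces the exponents $\xi_i\xi_j/4$. Averaging over the orientation of each loop turns every loop into a factor $\cos(\tfrac{\pi}{2}\sum_{i\text{ enclosed}}\xi_i)$.

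The key combinatorial point is the charge-completion lemma (Lemma~\ref{l.cancel}). Summing over all $\xi\in\{\pm1\}^n$ with $\sum_i\xi_i\in 4\Z$, possibly with sign weights, should cancel every connection pattern except those in which each cluster contains an even number of marked points with a consistent primal/dual type. What remains is exactly the even-cluster event, up to an explicit constant depending on $n$. To verify this, I would expand $\prod\cos$ over the nested loop structure separating the $x_i$, organize the configurations by the partition the loops induce, and check the cancellation by a character-sum computation over $\Z/4$. Small loops around a single $x_i$ contribute a common factor: each loop around one point gives $\cos(\pi/2)=0$ unless it is paired with others. This will need care, and it is handled by choosing $u_i$ on the primal or dual side of $x_i$.

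\textbf{Step 2: normalization and limit.} Neutral charge vectors ($\sum\xi_i=0$) are handled by GFF convergence. With the normalization of \cite{GFF6vertex}, I expect
\[
\E\bigl[e^{i\frac{\pi}{2}\sum_i\xi_i h(u_i)}\bigr]\approx \prod_{i<j}|x_i-x_j|^{\xi_i\xi_j/4}\,\varrho(\delta)^{n/2}.
\]
Here the mesoscopic diagonal terms (self-interaction from scale $\delta$ up to scale $1$) are absorbed into $\varrho(\delta)^{n/2}$. This is made precise by comparing with the $n=2$ case, which is exactly \eqref{e.varrho=} and fixes the constant. To make it rigorous, I would use a multiscale decomposition: condition on the height at a mesoscopic scale $r$ around each point, use the GFF limit at macroscopic scales, and use quasi-multiplicativity/mixing of the FK-4 measure (RSW at $q=4$ from \cite{DumSidTas13}) to factorize the local contributions into identical per-point factors. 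Each such factor equals $\varrho(\delta)^{1/2}$ times a function of $r$ that cancels.

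Non-neutral charges ($\sum\xi_i=\pm4,\dots$) must be shown to be $o(\varrho(\delta)^{n/2})$. Heuristically, a net charge produces a factor $\delta^{c(\sum\xi)^2}$ from the infrared divergence of the full-plane GFF. Rigorously, I would bound these terms using the variance of $h$ at scale $R$ growing like $\log(R/\delta)$, together with the same multiscale factorization.

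\textbf{Main obstacle.} The hardest step is the factorization in Step 2. We must show that the local contributions near each $x_i$ decouple uniformly and produce exactly $\varrho(\delta)^{1/2}$ each, given that only $\delta^{1/4+o(1)}$ is known and log corrections are present. This needs ratio-type estimates (mixing) rather than exponents. Uniformity over compacts then follows from the same estimates together with continuity of the limit.
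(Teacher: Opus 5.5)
There is a genuine gap in how Steps~1 and~2 fit together. With point charges at lattice faces, the charge completion does no work, and the per-$\xi$ asymptotics you expect in Step~2 are false.

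To see this, take every $u_i$ to be the face of the primal vertex $(x_i)_\delta$. After averaging orientations, the product $\prod_{\ell}\cos\bigl(\tfrac{\pi}{2}\sum_{i\in I_\ell}\xi_i\bigr)$ is nonzero iff every loop encloses an even number of the points. This holds iff every primal cluster contains an even number of them: a loop encloses a union of whole clusters, and the outer boundary of a cluster encloses its own points together with those in its holes. On this event all groups are of primal type, so any two groups are separated by an even number of loops. The parity argument in the proof of Lemma~\ref{l.e.monday} then gives sign $+1$ whenever $\sum_i\xi_i\in4\Z$. Hence
\[
\E\bigl[e^{\mathrm{i}\frac{\pi}{2}\sum_i\xi_i h(u_i)}\bigr]=\E_\delta\bigl[\S_{(x_1)_\delta}\cdots\S_{(x_n)_\delta}\bigr]\qquad\text{for every $\xi$ with $\textstyle\sum_i\xi_i\in4\Z$.}
\]
If you instead give the faces non-constant primal/dual types, every surviving pattern is mixed, and the completed sum of Lemma~\ref{l.cancel} vanishes identically.

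So the point observable does not depend on $\xi$. It cannot behave like $\varrho(\delta)^{n/2}\prod_{i<j}|x_i-x_j|^{\xi_i\xi_j/4}$ for each neutral $\xi$ and also be $o(\varrho(\delta)^{n/2})$ for $\sum_i\xi_i=\pm4$. By the theorem, its limit is the \emph{whole} sum over $\Xi_0$. Your multiscale factorization fails because the local event forced by a point charge (a primal arm from $(x_i)_\delta$ to the mesoscopic scale) is not a $\xi$-independent factor. It biases the macroscopic picture toward pure primal patterns. By contrast, the smeared observable whose limit is the single-$\xi$ Coulomb factor is a signed sum over all primal/dual patterns. Note also that \cite{GFF6vertex}, through Theorem~\ref{thm:M formula}, only controls smeared observables $\phi_{\delta\Z^2}[\mathcal A_F(\lpcfg)]$ with $F\in\mcl H^s_0$, not lattice-point vertex operators.

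The missing idea is to perform the charge completion at the mesoscopic scale, where it is nontrivial. For the smeared charges $F_{x,\xi,\eps}$, each ball $B_\eps(x_i)$ may be primal- or dual-connected. Up to an error controlled by arm events, $\phi_{\delta\Z^2}[\mathcal A_{F_{x,\xi,\eps}}(\lpcfg)]$ equals $\bc_\delta(\eps)^n\sum_{\pa\in\mathfrak{P}}a(\xi,\pa)\,\phi_{\delta\Z^2}[E_\eps(\pa)]$, where $\bc_\delta(\eps)$ absorbs the loops meeting the balls. This sum contains mixed patterns with $\xi$-dependent signs. Summing over $\tau\in4\Z$ removes exactly those (Lemma~\ref{l.cancel}), while on the GFF side only $\Xi_0$ survives. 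The paper shows that the non-neutral smeared terms vanish by squaring and translating them into a neutral test function, whose limit decays like $R^{-\tau^2/4}$.

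You then still need the following further inputs:
\begin{itemize}
\item the comparison $\phi[E_0(\pa)]\approx g_\eps(\delta)^n\phi[E_\eps(\pa)]$ for pure patterns, obtained by mixing conditioned on one-arm events;
\item the primal/dual symmetry $\sum_{\mathfrak{P}_{+,1}}=\sum_{\mathfrak{P}_{+,0}}$, which supplies the factor $\tfrac12$;
\item a diagonal choice of $\eps(\delta)$;
\item a clustering argument showing that the normalizations agree across $n$, before the constant is fixed using $n=2$.
\end{itemize}
That last step is the ratio-type estimate you correctly anticipated as the main obstacle.
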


The right-hand side of \eqref{e.t} defines a conformally covariant function corresponding to the correlation of fields with scaling dimension equal to $1/8$. For $n=2$, it is equal to $2|x_1-x_2|^{-1/4}$ (already proven in~\cite[Proposition~1.3]{qequal4}), and for $n=4$, it is equal to
\[
2\frac{|x_1-x_2|^{1/2}|x_3-x_4|^{1/2} + |x_1-x_3|^{1/2}|x_2-x_4|^{1/2} + |x_1-x_4|^{1/2}|x_2-x_3|^{1/2}}{|x_1-x_2|^{1/4}|x_1-x_3|^{1/4}|x_1-x_4|^{1/4}|x_2-x_3|^{1/4}|x_2-x_4|^{1/4}|x_3-x_4|^{1/4}}.
\]

It is then natural to consider the rescaled spin field 
\begin{align}\label{e.Phi_delta=}
   \Phi_\delta := \delta^2 \varrho(\delta)^{-\frac{1}{2}}\sum_{z\in\delta\Z^2} \S_z \boldsymbol{\delta}_z,  
\end{align}
where $\boldsymbol{\delta}_z$ denotes the Dirac mass at $z$. 
We view $\Phi_\delta$ as a random distribution on $\R^2$.
For $s>0$, we denote by $\mcl H^{-s}(\R^2)$ and $\mcl H^s_0(\R^2)$ the
$L^2$-based Sobolev spaces described in Section~\ref{s.sobolev}. For every
$n\in\N$ and every $h\in\mcl H^s_0(\R^2)$ for some $s>1$, we define
\begin{align}
    M_n(h):=
    \begin{cases}
    \displaystyle \int_{(\R^2)^n}
    h(x_1)\cdots h(x_n)\la x_1,\ldots,x_n\ra
    \,dx_1\cdots dx_n, & n\in2\N,\\
    0, & n\notin2\N,
    \end{cases}
    \label{e.M_n(h)=}
\end{align}
where $\la x_1,\ldots,x_n\ra$ denotes the right-hand side of~\eqref{e.t}.
Since $s>1$, such $h$ is bounded and compactly supported. Moreover, the
singularities of $\la x_1,\ldots,x_n\ra$ along the diagonals are locally
integrable, and hence $M_n(h)$ is well-defined.

\begin{theorem}\label{t.Phi}
For any $s>1$, the magnetization field $\Phi_\delta$ converges in law to a
random distribution $\Phi_0$ in $\mathcal H^{-s}(\mathbb R^2)$ as $\delta\to0$.
Moreover, the law of $\Phi_0$ is uniquely characterized by the
moment identities $\mathbb E\left[\left(\int \Phi_0 h\right)^n\right] = M_n(h)$ for every $h\in\mathcal H^s_0(\mathbb R^2)$
and every $n\in\N$.
In particular, for each such $h$, the law of $\int\Phi_0h$ is determined by $(M_n(h))_{n\in\N}$.
\end{theorem}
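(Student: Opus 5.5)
We will prove Theorem~\ref{t.Phi} by the method of moments combined with tightness in $\mcl H^{-s}(\R^2)$, taking Theorem~\ref{t} as the main input.

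\textbf{Step 1: uniform moment bounds.} For $h\in\mcl H^s_0(\R^2)$ (bounded, compactly supported) we have
\[
\E_\delta\Big[\Big(\int\Phi_\delta h\Big)^n\Big]=\delta^{2n}\varrho(\delta)^{-n/2}\sum_{z_1,\dots,z_n}h(z_1)\cdots h(z_n)\,\E_\delta[\S_{z_1}\cdots\S_{z_n}].
\]
To pass to the limit we need a domination of $\varrho(\delta)^{-n/2}\E_\delta[\S_{z_1}\cdots\S_{z_n}]$ that is integrable near the diagonals and uniform in $\delta$. We would bound the event ``all clusters contain an even number of the $z_i$'' by the event that each $z_i$ is connected to distance comparable to $\min_{j\ne i}|z_i-z_j|$. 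Using quasi-multiplicativity and arm-event estimates for critical FK$(4)$ (RSW from \cite{DumSidTas13}, one-arm asymptotics consistent with \eqref{e.varrho=}), we expect a bound of the form
\[
C_n\prod_i \min_{j\neq i}|z_i-z_j|^{-1/8-\eps}
\]
after normalization, with the $\delta^{o(1)}$ factors absorbed via $\varrho(\delta)\ge\delta^{1/4+\eps}$. This bound is locally integrable in $(\R^2)^n$.

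\textbf{Step 2: convergence of moments.} With this domination, uniform convergence on compacts away from diagonals (Theorem~\ref{t}) and dominated convergence give $\E_\delta[(\int\Phi_\delta h)^n]\to M_n(h)$. Odd moments vanish identically.

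\textbf{Step 3: tightness.} The $n=2$ bound, applied with Fourier modes $e_k$ weighted by a cutoff, gives
\[
\E\|\chi\Phi_\delta\|^2_{\mcl H^{-s}}\le \int (1+|k|^2)^{-s}\,\E\Big|\int\Phi_\delta\,\chi e_k\Big|^2 dk\le C\int(1+|k|^2)^{-s}\,dk<\infty
\]
for $s>1$. Here $\chi$ is a compact cutoff, and we use local control with a weighted/localized version of the space as defined in Section~\ref{s.sobolev}. Compact embedding then yields tightness by Rellich. Any subsequential limit $\Phi_0$ has $\E[(\int\Phi_0h)^n]=M_n(h)$, since the uniform bound on $2n$-th moments gives uniform integrability.

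\textbf{Step 4: uniqueness.} We would show the moment problem is determinate via Carleman's condition. This requires $|M_n(h)|\le C^n (n!)^{1}$ or similar. Bound $\la x_1,\dots,x_n\ra$ by $\binom{n}{n/2}$ times the worst term; alternatively use Gaussian-free-field representation: $\la\cdots\ra$ is the neutral part of $\E\prod e^{i\xi_j\varphi(x_j)/2}$ for a GFF $\varphi$ (imaginary charges), so $|M_n(h)|\le 2^n\E|\int h e^{i\varphi/2}|^n$ suitably regularized, giving $C^n$ growth up to combinatorics. Determinacy of each one-dimensional marginal plus Cramér--Wold (via linearity in $h$) identifies the law of $\Phi_0$ and hence full convergence.

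The main obstacle is Step 1: a $\delta$-uniform, diagonal-integrable bound on multi-point connection probabilities for critical FK$(4)$ despite only knowing $\varrho(\delta)=\delta^{1/4+o(1)}$. The $o(1)$ in the exponent must be handled through quasi-multiplicativity rather than sharp asymptotics. Step 4's growth bound is the secondary difficulty.
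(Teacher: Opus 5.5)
\textbf{Steps 1--3.} These follow the paper's route closely. Positivity of correlations gives a uniform second-moment bound, hence tightness in $\mcl H^{-s}$ because $\sum(j^2+k^2)^{-s}<\infty$ for $s>1$. Moments converge by Theorem~\ref{t} away from the diagonals plus a near-diagonal estimate. Uniform integrability transfers the moments to subsequential limits, and Cram\'er--Wold finishes. Your pointwise domination by products of one-arm ratios is a more explicit form of the near-diagonal bound the paper imports from \cite{camia2015planar}.

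One correction in Step 1: the $\delta^{o(1)}$ factors cannot be absorbed using $\varrho(\delta)\ge\delta^{1/4+\eps}$, since this leaves a divergent factor $\delta^{-O(\eps)}$. What works is $\pi_1(r/\delta)/\pi_1(1/\delta)\asymp\pi_1(r/\delta,1/\delta)^{-1}\lesssim r^{-a}$ uniformly in $\delta$. Any $a<1$ suffices for integrability, so the sharp exponent $1/8$ is not needed.

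\textbf{The gap: Step 4.} Carleman's condition needs an explicit bound such as $|M_{2k}(h)|\le A_h^{2k}(2k)!$, and you do not supply one. The factor $\binom{n}{n/2}$ is harmless (the paper uses it too). The problem is the integral of a single neutral term $\prod_{i<j}|x_i-x_j|^{\xi_i\xi_j/4}$ against $\prod_\ell|h(x_\ell)|$, for which there is no cheap $\xi$-uniform bound.

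Suppose you bound the like-charge factors by $1$, which is harmless after rescaling $\supp h$ to diameter at most $1$. You are left with $\prod_{i\in P,\,j\in N}|x_i-x_j|^{-1/4}$, which has $k^2$ singular factors. Collapsing all $2k$ points shows it is not even locally integrable once $k^2/4\ge 2(2k-1)$. So the screening by like charges must genuinely be used.

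The GFF heuristic does not supply this either. The renormalized ${:}e^{\mathrm i\varphi/2}{:}$ is not bounded, so there is no free $C^n$ bound on its moments. Moreover, the known moment bounds for imaginary chaos themselves rest on an Onsager-type electrostatic inequality, which is exactly the missing estimate.

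\textbf{How the paper closes it.} The paper uses the Cauchy double alternant identity. Set $P=\{i:\xi_i=1\}$ and $N=\{i:\xi_i=-1\}$, and identify $\R^2$ with $\mathbb C$. Then $\prod_{i<j}|x_i-x_j|^{\xi_i\xi_j}=|\det(1/(x_i-x_j))_{i\in P,\,j\in N}|$. Expanding the determinant and using subadditivity of $u\mapsto u^{1/4}$ gives $\prod_{i<j}|x_i-x_j|^{\xi_i\xi_j/4}\le\sum_{\pi:P\to N}\prod_{i\in P}|x_i-x_{\pi(i)}|^{-1/4}$. The integral of this against $\prod_\ell|h(x_\ell)|$ is $k!\,B_h^k$, where $B_h=\iint|h(x)||h(y)||x-y|^{-1/4}\,dx\,dy$. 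Hence $|M_{2k}(h)|\le\binom{2k}{k}k!\,B_h^k$.

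\textbf{Alternative: your own Step 1.} Your Step 1 also closes the gap if you track constants.
\begin{enumerate}
\item Condition successively on the configuration outside the disjoint balls $B_{r_i/4}(z_i)$ and compare with wired boundary conditions. This gives the domination with constant $C^n$, not merely $C_n$.
\item In the limit, $\la x_1,\dots,x_n\ra\le C^n\prod_i\min_{j\neq i}|x_i-x_j|^{-a}$.
\item Write this product as $\prod_i|x_i-x_{\nu(i)}|^{-a}$, where $\nu$ is the nearest-neighbour map; all cycles of $\nu$ have length $2$.
\item Sum over the at most $(n-1)^n$ possible such maps. For each, integrate out leaves first and then the $2$-cycles, using $2a<2$. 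This bounds the integral over $(\supp h)^n$ by $(n-1)^nB^n$.
\end{enumerate}
This again yields $|M_n(h)|\le A^n\,n!$, which satisfies Carleman's condition.
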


By Theorem~\ref{t}, we can show that $\lim_{\delta \to 0} \E_\delta\Ll[\Ll(\int \Phi_\delta h\Rr)^n\Rr] = M_n(h)$
by adapting the arguments in~\cite{camia2015planar} for the magnetization field of the Ising model. In~\cite{camia2015planar}, the limiting magnetization field is identified through the convergence of characteristic functions, after establishing uniform bounds on the exponential moments of the magnetization field using the GHS inequality. In the present setting, however, this inequality is not available. Consequently, we cannot obtain the corresponding uniform exponential-moment bounds, and we instead identify the limit through its moments by invoking Carleman's condition.

Theorem \ref{t.Phi} formally connects the scaling limit of the magnetization field $\Phi_\delta$ to the imaginary Gaussian multiplicative chaos (GMC). In the physics literature, such continuous limit fields are anticipated to behave as exponentiated imaginary fields of the form $\cos(\beta X)$, where $X$ is the full-plane GFF with covariance $-\log|x-y|$. 
Here, $\Phi_0$ is expected to be proportional to the renormalized imaginary-chaos field $\mathopen{:}\cos(X/2)\mathclose{:}$. 

Rigorous continuous constructions of the imaginary GMC have been achieved in recent years, notably in~\cite{iGMC,aru2022density}. However, these works restrict the construction to bounded domains or require integration against test functions with strict compact support. Extending the imaginary chaos to the full plane $\mathbb{R}^2$ has remained a mathematical hurdle. The primary obstacle is that the whole-plane GFF is only defined up to a random additive constant (the zero mode). 
Here, by establishing the scaling limit of the lattice magnetization field directly, the zero-mode integration is bypassed.

Throughout, we use the following notation. If $g$ and $g'$ are positive functions on a common domain $\mathcal X$, then $g \lesssim g'$ means that there exists a constant $C>0$ such that $g(x) \le C g'(x)$ for all $x \in \mathcal X$.
We write $g \asymp g'$ when both $g \lesssim g'$ and $g' \lesssim g$ hold. The domain $\mathcal X$ will be left implicit whenever it is clear from context.

\section{Proof of Theorem~\ref{t} modulo some results}

It suffices to prove the following.

\begin{proposition}\label{p}
There is a factor $f(\delta)=\delta^{\frac{1}{8}+o(1)}$ such that, for every $n\in 2\N$ and distinct points $(x_1,\cdots,x_n)\in(\R^2)^n$, we have
\begin{equation}\label{e.p}
    \lim_{\delta\rightarrow 0} f(\delta)^{-n}\E_\delta\Ll[ \S_{(x_1)_\delta} \cdots \S_{(x_n)_\delta} \Rr] = 2^{-n}e^{c_0n} \sum_{\substack{\xi\in\{-1,+1\}^n\\\sum_{i=1}^n\xi_i=0}}\prod_{1\leq i<j\leq n}|x_i-x_j|^{\frac{\xi_i\xi_j}{4}}
\end{equation}
where $c_0  = \frac{1}{8\pi^2}\iint_{B_1(0)\times B_1(0)}\log|\rz-\rz'|\d \rz\d \rz'$.\footnote{In fact, one can compute explicitly that $c_0=-\frac{1}{32}$, but this value is inconsequential.} Moreover, the convergence is uniform over every compact subset of $\{(x_1,\dots,x_n):\: x_i\neq x_j\ \text{for all $i\neq j$}\}$.
\end{proposition}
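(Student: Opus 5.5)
We plan to derive Proposition~\ref{p} from the Baxter--Kelland--Wu (BKW) coupling between the critical $q=4$ random-cluster measure and the six-vertex model with $a=b=1$, $c=2$ ($\Delta=-1$), together with the GFF convergence of the height function from \cite{GFF6vertex}.

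\textbf{Step 1: spin correlations as connection probabilities.} As explained before~\eqref{e.varrho=}, the correlation $\E_\delta[\S_{(x_1)_\delta}\cdots\S_{(x_n)_\delta}]$ equals the $\phi_{\delta\Z^2}$-probability that the points are grouped into clusters each containing an even number of them. Write this as a sum over \emph{even} partitions $P$ of $\{1,\dots,n\}$ of the probability of the connection pattern $P$.

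\textbf{Step 2: BKW and the charge sum.} In the BKW coupling, loops of the medial loop representation receive independent orientations with complex weights $e^{\pm i\lambda\cdot(\text{winding})}$, where $2\cos(\lambda\cdot 2\pi)=\sqrt q=2$. The height function $h$ changes by $\pm1$ across each oriented loop. We take an observable of the form $\exp(\tfrac{i\pi}{2}\sum_j \xi_j h(x_j^\ast))$, with $x_j^\ast$ a face adjacent to $x_j$ and charges $\xi\in\{\pm1\}^n$. Averaging over loop orientations, each loop surrounding a set $A$ of points contributes a factor depending on the partial charge $\sum_{j\in A}\xi_j$ and on whether the points lie on its primal or dual side. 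Summing over all $\xi$ with $\sum\xi_i\in 4\Z$ with suitable signs, we expect a combinatorial identity (the charge-completion lemma, Lemma~\ref{l.cancel}): the total equals a constant multiple of the probability of even primal connection patterns, up to factors that only involve the loops surrounding the individual points $x_i$ at microscopic scales. Connection patterns with mixed primal/dual types should cancel. Proving this identity will be a finite but delicate bookkeeping of nested loop configurations, organised by the tree of loops separating the points.

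\textbf{Step 3: scaling limit.} For each fixed $\xi$, we mollify by averaging $h$ over discs $B_{r}(x_j)$. Combined with the GFF convergence of $h$ (including exponential moments, which we assume are available from \cite{GFF6vertex} or obtainable from its proof), this gives
\[
\E\Big[e^{\frac{i\pi}{2}\sum_j\xi_j h_r(x_j)}\Big]\approx \exp\Big(-\tfrac{\pi^2}{8}\mathrm{Var}\Big(\sum_j \xi_j h_r(x_j)\Big)\Big),
\]
where the GFF normalisation for $\Delta=-1$ makes the cross terms equal to $\prod_{i<j}|x_i-x_j|^{\xi_i\xi_j/4}$. The diagonal terms give $r^{(\sum\xi)^2/8}$-type factors. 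The constant $c_0$ appears from the disc average of $\log|z-z'|$. For $\sum\xi\neq0$, the full-plane zero mode forces these terms to vanish, leaving only neutral assignments. The factor $f(\delta)$ is obtained by matching scales $r\to\delta$: the passage from mesoscopic radius $r$ to lattice scale is a multiplicative factor per point, defined as $f(\delta)$ and identical for each point. Its independence from configuration follows from quasi-multiplicativity/mixing of the FK measure (RSW for $q=4$ from \cite{DumSidTas13}). Comparison with $n=2$ and~\eqref{e.varrho=} then gives $f(\delta)=\delta^{1/8+o(1)}$.

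\textbf{Main obstacle.} The hardest step is controlling the interchange of the two limits $\delta\to0$ and $r\to0$ uniformly. This requires showing that the microscopic loops around each $x_i$ decouple, so that the renormalisation factor $f(\delta)$ depends only on the local configuration. It also requires showing that the combinatorial error terms from Lemma~\ref{l.cancel} vanish. We would first try to get this using a separation-of-scales/mixing argument together with arm-exponent bounds at $q=4$, which control loops surrounding two points. Uniformity on compacts will then follow from the fact that these estimates hold uniformly in the pairwise distances.
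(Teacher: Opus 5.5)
Your architecture matches the paper's: the BKW coupling, the GFF limit of \cite{GFF6vertex}, a charge sum over $\sum_i\xi_i\in4\Z$ in which only neutral charges survive, and a per-point renormalization. The gap is that the step carrying the actual content is missing, and the charge completion is placed at the wrong level.

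With point evaluations at the primal vertices $x_j$, nothing cancels. For \emph{every} $\xi$ with $\sum_i\xi_i\in4\Z$, the product $\prod_\ell\cos\bigl(\tfrac{\pi}{2}\sum_{j\in A_\ell}\xi_j\bigr)$ (with $A_\ell$ the points surrounded by $\ell$) is exactly the indicator that every open cluster contains an even number of the $x_j$. A loop around an odd number of points gives $0$, and the parity argument with $\sum_j b_j=\tau/2\in2\Z$ makes the remaining signs multiply to $1$. So your point observable equals $\E_\delta[\S_{(x_1)_\delta}\cdots\S_{(x_n)_\delta}]$, independently of $\xi$. The mollified limits, however, do depend on $\xi$, and they vanish when $\sum_i\xi_i\neq0$. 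Hence no $\xi$-independent per-point factor can convert scale $r$ to scale $\delta$ for each fixed $\xi$.

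Mixed primal/dual patterns, and with them the need for charge completion, arise only at the mesoscopic level, because an $\eps$-ball can be reached from outside by either a primal or a dual arm. What you need is the expansion of the mollified observable itself:
\[
\phi_{\delta\Z^2}\bigl[\mathcal{A}_{F_{x,\xi,\eps}}(\mathcal L)\bigr]=\bc_\delta(\eps)^n\sum_{\pa\in\mathfrak{P}}a(\xi,\pa)\,\phi_{\delta\Z^2}[E_\eps(\pa)]+O\bigl(\pi_1(\tfrac{\eps}{\delta},\tfrac{d}{\delta})^n(\tfrac{\eps}{d})^c\bigr).
\]
This expansion and its use require three things.
\begin{enumerate}
\item Loops hitting the balls contribute cosines of non-quantized angles. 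These must factor out into a deterministic $\bc_\delta(\eps)^n$ bounded away from $0$, which is a genuine input from \cite{qequal4} (Proposition~\ref{prop:forget small loops} and Lemma~\ref{l.kappa}).
\item You need a $\pi_1^{n-1}\pi_2$ bound for loops that touch one ball and surround another.
\item The unsigned sum over $\sum_i\xi_i\in4\Z$ then kills the non-constant patterns, and self-duality turns $\sum_{\mathfrak{P}_+}$ into twice the primal sum.
\end{enumerate}
Finally, passing from $E_\eps$ to the lattice events $E_0$ needs the ratio estimate $\phi[E_0(\pa)]=g_\eps(\delta)^n\phi[E_\eps(\pa)](1+O(\eps^{c'}))$. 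This comes from a coupling conditioned on one-arm events, as in \cite{garban2013pivotal}. Quasi-multiplicativity and RSW only give comparability up to constants, which cannot produce an exact limit.

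Separately, your claim that ``the zero mode forces the non-neutral terms to vanish'' is a heuristic, not a proof. The terms with $\sum_i\xi_i\in4\Z\setminus\{0\}$ are genuinely present in the completed sum, and their discrete observables are well defined. But the GFF convergence and Theorem~\ref{thm:M formula} apply only to mean-zero test functions, so these terms are not covered. The paper shows they vanish by squaring:
\begin{enumerate}
\item Mixing, together with the absence of loops linking two far-apart regions (both uniform in $\delta$), gives $\phi[\mathcal{A}_F(\mathcal L)]^2\approx\phi[\mathcal{A}_{F_R}(\mathcal L)]$, where $F_R=F(\cdot-(R,0))-F(\cdot+(R,0))$ has mean zero.
\item The Coulomb-gas formula then gives $\phi[\mathcal{A}_{F_R}(\mathcal L)]=O(R^{-\tau^2/4})$ in the limit $\delta\to0$.
\item Uniformity in $\delta$ lets you interchange the limits $\delta\to0$ and $R\to\infty$.
\end{enumerate}
Without these ingredients, your Step~3 yields the neutral Coulomb-gas sum for the mollified observables but never connects it to the spin correlations.
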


\begin{proof}[Proof of Theorem~\ref{t}]
Recall $\varrho(\delta)$ from~\eqref{e.varrho=}. Then, using~\eqref{e.p} with $n=2$, $x_1=(0,0)$, and $x_2=(1,0)$, we get $\lim_{\delta\to0}f(\delta)^{-2}(2\varrho(\delta))=2^{-1}e^{2c_0}$ and thus $\lim_{\delta\to0}\frac{2e^{-c_0}\varrho(\delta)^\frac{1}{2}}{f(\delta)}=1$. From this relation, we can obtain~\eqref{e.t} from~\eqref{e.p}.
\end{proof}

Henceforth, we focus on proving Proposition~\ref{p}.
One important ingredient of the proof is Theorem~\ref{thm:M formula}, which combines the Baxter--Kelland--Wu coupling \cite{baxter1976equivalence} with the convergence of the six-vertex model to the Gaussian free field~\cite{GFF6vertex}.
A collection \(L\) of non-self-crossing loops is said to be \textit{locally finite} if, for every compact \(D\subset \R^2\), there are only finitely many loops in \(L\) that intersect \(D\) nontrivially. For every integrable, compactly supported function \(F:\R^2\to\R\) satisfying \(\int_{\R^2}F(z)\,\d z\in 2\pi\Z\), and every locally finite \(L\), we define
\begin{equation}\label{e.A_F(L)=}
\mathcal A_F(L):=\prod_{\ell\in L}\cos\Ll(\int_{\itr(\ell)} F(z)\,\d z\Rr)
\end{equation}
where $\itr(\lo)$ is the complement of the unbounded component of $\R^2\setminus\ell$.
By local finiteness, only finitely many loops can separate different parts of
$\supp F$; all sufficiently large loops surrounding $\supp F$ contribute
$\cos(\int_{\R^2}F)=1$. Thus $\mathcal A_F(L)$ is well-defined.
Throughout, given a percolation configuration $\omega$, we denote by
\begin{align}\label{e.cL=}
    \lpcfg = \lpcfg(\omega) :=\Ll\{\text{interface loops of $\omega$}\Rr\}
\end{align}
the associated loop configuration of $\omega$.

\begin{theorem}[{\cite[Theorem 3.1]{qequal4}}]\label{thm:M formula}
    For every mean-zero $F\in \mcl H^s_0(\R^2)$ for some~$s\in(0,1)$, we have
    \begin{equation*}\lim_{\delta\rightarrow 0} \rcE_{\delta \Z^2,p_\crit,4}\Ll[\mathcal A_F(\lpcfg)\Rr]=\exp\Ll( \frac{1}{2\pi^2} \iint_{\R^2\times\R^2} F(z)F(z')\log|z-z'| \d z\d z' \Rr).
    \end{equation*}
\end{theorem}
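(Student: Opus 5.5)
The plan is to realize $\mathcal A_F(\lpcfg)$ as the conditional characteristic functional of a height function, and then pass to the limit using the convergence of the six-vertex height function to the GFF from \cite{GFF6vertex}.

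\textbf{Step 1: orientation coupling.} We use the Baxter--Kelland--Wu coupling \cite{baxter1976equivalence}. Given $\omega\sim\phi_{\delta\Z^2}$, orient each interface loop $\lo\in\lpcfg(\omega)$ independently, clockwise or counterclockwise. At $q=4$ we have $\sqrt q=2=e^{i\lambda}+e^{-i\lambda}$ with $\lambda=0$, so the turning weights are trivial and the two orientations are equally likely. The resulting oriented-loop configuration is the isotropic six-vertex model with $a=b=1$ and $c=2$, that is, $\Delta=-1$, which is the endpoint of the range covered by \cite{GFF6vertex}.

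Define the height function $h_\delta$ on faces, normalized to $0$ far away (or at a fixed face). It jumps by $\pm\pi$ across each oriented loop, with the sign given by the orientation. We choose the normalization $\pi$ so that the limit has the variance required in Step 3. Then
\[
h_\delta(z)=\pi\sum_{\lo\ni z}\epsilon_\lo ,
\]
where the sum runs over loops $\lo$ with $z\in\itr(\lo)$ and the $\epsilon_\lo$ are i.i.d.\ symmetric signs. Since $F$ has mean zero, replacing the base point changes $\int F h_\delta$ by a constant times $\int F=0$, so the zero-mode ambiguity disappears.

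Averaging over the orientations gives
\[
\E\Big[e^{\frac{i}{\pi}\int F h_\delta}\,\Big|\,\omega\Big]=\prod_{\lo}\E\big[e^{i\epsilon_\lo\int_{\itr(\lo)}F}\big]=\mathcal A_F(\lpcfg).
\]
Local finiteness, together with $\int F=0$, ensures that only finitely many factors differ from $1$. Taking expectations,
\[
\rcE\big[\mathcal A_F(\lpcfg)\big]=\E\big[\exp\big(\tfrac{i}{\pi}\textstyle\int F h_\delta\big)\big],
\]
where the right-hand side is taken under the full-plane six-vertex measure. Identifying this measure with the push-forward of the infinite-volume FK measure uses uniqueness of the critical $q=4$ infinite-volume measure, so that limits from free boundary conditions agree.

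\textbf{Step 2: the continuum limit.} We invoke \cite{GFF6vertex}: at $\Delta=-1$ (so $c=2$), $h_\delta$ converges in law, as a distribution tested against mean-zero test functions, to $\sqrt{2}\,X$, where $X$ is the full-plane GFF with covariance $-\log|z-z'|$ modulo constants. The constant $\sqrt2$ is fixed by the known stiffness $g$ at $\Delta=-1$ in our normalization. Then $\frac1\pi\int F h_\delta$ converges to a centered Gaussian with variance
\[
-\frac{2}{\pi^2}\iint F(z)F(z')\log|z-z'|\,\d z\,\d z'.
\]
Since $x\mapsto e^{ix}$ is bounded and continuous, its characteristic function at $1$ gives exactly
\[
\exp\Big(\frac1{2\pi^2}\iint FF\log|z-z'|\Big).
\]

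\textbf{Step 3: the main obstacle.} The hard part is upgrading the convergence of \cite{GFF6vertex}, which is stated for smooth compactly supported test functions, to a general mean-zero $F\in\mcl H^s_0$ with $s\in(0,1)$. The discrete pairing against the piecewise-constant $h_\delta$ must also be handled.

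I would first approximate $F$ in $\mcl H^{s}$ by smooth mean-zero $F_k$. I would then use a uniform second-moment bound
\[
\E\Big[\big(\textstyle\int (F-F_k)h_\delta\big)^2\Big]\lesssim \|F-F_k\|_{\mcl H^{-1}\text{-type}}^2 .
\]
This follows from logarithmic variance bounds for the height function (two-point covariance $\lesssim 1+\log_+(1/|z-z'|)$ uniformly in $\delta$), which \cite{GFF6vertex} provides or which follow from RSW-type estimates at $q=4$. Such a logarithmic kernel is bounded on $\mcl H^{s}_0$ for any $s>0$.

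Combining this bound with $|e^{ia}-e^{ib}|\le|a-b|$, the error from replacing $F$ by $F_k$ is uniformly small in $\delta$. Finally, letting $k\to\infty$ on the right-hand side, by continuity of the quadratic form in $F$, completes the argument.
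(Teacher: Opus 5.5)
\textbf{There is a gap in Step 2, at the constant.} Your strategy is the one behind the statement, and the paper sketches it in the proof of Lemma~\ref{lem:uniform_gff_convergence}: Baxter--Kelland--Wu with uniformly random loop orientations at $q=4$, $\Delta=-1$; the identity $\phi_{\delta\Z^2}[\mathcal A_F(\mathcal L)]=\E[\exp(i\sum_\ell\epsilon_\ell\int_{\itr(\ell)}F)]$; and the GFF limit from \cite{GFF6vertex}. The constant is the whole quantitative content of the theorem, and you get it wrong in two places.

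First, the input is wrong. For your height with jumps $\pm\pi$, the limit at $\Delta=-1$ is $X$ itself (covariance $-\log|z-z'|$), not $\sqrt2\,X$. Equivalently, the height with jumps $\pm1$ converges to a GFF with covariance $-\frac{1}{\pi^2}\log|z-z'|$. This is the normalization the paper uses when it identifies the right-hand side with $\E[e^{i\int Fh}]$.

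Second, your computation does not follow from your own input. With the variance $\sigma^2=-\frac{2}{\pi^2}\iint F(z)F(z')\log|z-z'|\,\d z\,\d z'$ that you state, the characteristic function at $1$ is $e^{-\sigma^2/2}=\exp(\frac{1}{\pi^2}\iint FF\log|z-z'|)$. That is not $\exp(\frac{1}{2\pi^2}\iint FF\log|z-z'|)$. The spurious $\sqrt2$ and the dropped factor $\frac12$ cancel each other.

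This matters. With covariance $-2\log$, the charges $\pm\frac{\pi}{2}$ of \eqref{e.F_xi,eps=} would give two-point decay $|x_1-x_2|^{-1/2}$, i.e.\ a one-arm exponent $\frac14$ instead of $\frac18$. Take the normalization from \cite{GFF6vertex} rather than ``fixing $\sqrt2$ by the stiffness''. Then $\sigma^2=-\frac{1}{\pi^2}\iint F(z)F(z')\log|z-z'|\,\d z\,\d z'$, and $e^{-\sigma^2/2}$ is exactly the claimed formula.

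\textbf{Step 3 is unnecessary, and its justification is incorrect.} In the proof of Lemma~\ref{lem:uniform_gff_convergence}, the paper uses \cite[Theorem~2.8 and Definition~2.7(iii)]{GFF6vertex}, which give convergence in law in $\mathcal H^{-s}([-L,L]^2)$ for small $s>0$. For $F\in\mathcal H^s_0$ with $\supp F\subset(-L,L)^2$, use $\mathcal H^s_0\subset\mathcal H^{s'}_0$ for $s'<s$. The pairing $u\mapsto\int Fu$ is then continuous, so convergence of characteristic functions is immediate and no smooth approximation is needed.

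If you approximate anyway, the bound you invoke does not hold. A covariance bound $\lesssim 1+\log_+(1/|z-z'|)$, uniform in $\delta$, is false for the discrete full-plane height. Indeed $\E[(h_\delta(z)-h_\delta(z'))^2]\asymp\log(|z-z'|/\delta)$ diverges as $\delta\to0$. RSW gives only this two-sided comparability. It does not give the sharp $c\log(|z-z'|/\delta)+O(1)$ asymptotics that would let the $\log(1/\delta)$ term cancel against $\int G=0$.

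A valid uniform estimate comes from conditioning on $\omega$: $\E[(\int G h_\delta)^2]=\pi^2\E[\sum_\ell(\int_{\itr(\ell)}G)^2]$. Bound this loop by loop. For small loops, use $(\int_{\itr(\ell)}G)^2\le|\itr(\ell)|\int_{\itr(\ell)}G^2$ together with RSW control of the number of loops per scale. For large loops, use decay of arm events.
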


We refer to Section~\ref{s.sobolev} for details of the Sobolev space $\mcl H^s_0(\R^2)$.
In the proof, we also need to consider one-arm events as follows.
For $0\leq r<R<\infty$, we define
\begin{align*}\pi_1(r,R):= \phi_{\Z^2}\Ll[[-r,r]^2\longleftrightarrow (\R^2\setminus(-R,R)^2)\Rr]\qquad\text{and}\qquad\pi_1(R):=\pi_1(0,R).
\end{align*}

We introduce some notation needed for the proof.
Fix $n\in 2\N$ and fix points $x_1,x_2,\dots,x_n$ in $\R^2$. 
Let $\pa = (P_j,p_j)_{1\leq j\leq J}$ denote a connection pattern on percolation configurations which is specified by an integer $J\in\N$, a partition $(P_j)_{1\leq j\leq J}$ of the index set $\{1,2,\dots,n\}$ satisfying that each $|P_j|$ is even, and a sequence of numbers $(p_j)_{1\leq j\leq J}$ where $p_j=0$ (resp.\ $p_j=1$) indicates $\{x_i\}_{i\in P_j}$ are dual-connected (resp.\ primal-connected). We denote by $\mathfrak{P}$ the collection of such $\pa$.
We also need the subcollections
\begin{align}\label{e.frakP_+=}
    \mathfrak{P}_+:=\Ll\{\pa\in \mathfrak{P}:\:\text{$(p_j)$ of $\pa$ are all equal}\Rr\}\quad\text{and}\quad \mathfrak{P}_{+,s}:=\Ll\{\pa\in \mathfrak{P}_+:\:p_j=s,\,\forall j\Rr\} \text{ for $s\in\{0,1\}$}.
\end{align}
We clearly have $\mathfrak{P}_+=\mathfrak{P}_{+,1}\sqcup \mathfrak{P}_{+,0}$. 

For $s\in\{0,1\}$, an $s$-connection means a primal connection if $s=1$
and a dual connection if $s=0$. 
For each $\eps>0$, we consider the event $E_\eps(\pa)$ for $\omega$ that
\begin{itemize}
    \item for every $j$, $(B_\eps(x_i))_{i\in P_j}$ are mutually $p_j$-connected in $\omega$;
    \item for distinct $j$ and $j'$, there is no primal or dual connection in $\omega$ between $(B_\eps(x_i))_{i\in P_j}$ and $(B_\eps(x_i))_{i\in P_{j'}}$.
\end{itemize}
When $\eps=0$, we define $E_0(\pa)$ similarly, but with $p_j$-connections understood as among $\Ll\{(x_i)_\delta\Rr\}_{i\in P_j}$ if $p_j=1$ and among $\Ll\{(x_i)_\delta+\frac{\delta}{2}\Ll(1,1\Rr)\Rr\}_{i\in P_j}$ if $p_j=0$.

Notice that $\mathfrak{P}_{+,1}$ describes an even partition of the points $x_1,\dots,x_n$ into open clusters. The definition of $\S$ in~\eqref{e.S=} together with discussion above~\eqref{e.varrho=} implies the first equality in
\begin{align}\label{e.<S..S>=sum_P}
    \E_\delta\Ll[ \S_{(x_1)_\delta}\cdots \S_{(x_n)_\delta}\Rr] = \sum_{\pa\in\mathfrak{P}_{+,1}}\phi_{\delta\Z^2}\Ll[E_0(\pa)\Rr] = \frac{1}{2}\sum_{\pa\in\mathfrak{P}_+}\phi_{\delta\Z^2}\Ll[E_0(\pa)\Rr], 
\end{align}
where the second equality follows from $\sum_{\pa\in\mathfrak{P}_{+,1}}\phi_{\delta\Z^2}\Ll[E_0(\pa)\Rr]=\sum_{\pa\in\mathfrak{P}_{+,0}}\phi_{\delta\Z^2}\Ll[E_0(\pa)\Rr]$.

Let $\xi=(\xi_1,\cdots ,\xi_n)\in\{\pm1\}^n$ denote the assignment of weights $\xi_i\pi/2$ on $\eps$-balls centered at $x_i$ for every $i\in\llbracket1,n\rrbracket$. 
For $\eps>0$ and $\xi$, writing $x=(x_i)_{i\in\llbracket1,n\rrbracket}$, we consider the test function
\begin{align}\label{e.F_xi,eps=}
    F_{x,\xi,\eps} := \sum_{i=1}^n \frac{\xi_i}{2\eps^2}\one_{B_\eps(x_i)}.
\end{align}
For $\tau\in 2\Z$, we consider the collection
\begin{align}\label{e.Xi_tau=}
    \Xi_\tau :=\Ll\{\xi\in\{\pm1\}^n:\:\sum_{i=1}^n\xi_i =\tau\Rr\}.
\end{align}
Since \(B_\eps(x_i)\) has Lipschitz boundary, $F_{x,\xi,\eps}$ belongs to $\mcl H^s_0(\R^2)$ for every $s<\frac{1}{2}$ (see~\cite[Lemma~4]{sickel2020regularity}). Hence, whenever $\tau=0$, Theorem~\ref{thm:M formula} applies.

Throughout, we write $\llbracket r,s\rrbracket=[r,s]\cap\Z$. Given $\xi\in\Xi_\tau$ for some $\tau$ and $\pa\in \mathfrak{P}$, we define
\begin{align}
    b_j  = b_j(\xi,\pa)  := \frac{1}{2}\sum_{i\in P_j}\xi_i,\quad\forall j \in \llbracket1,J\rrbracket \qquad \text{and} \qquad a(\xi,\pa) := (-1)^{\sum_{j=1}^Jb_jp_j}.  \label{e.b_j=a(xi,pi)=}
\end{align}

\begin{proof}[Proof of Proposition~\ref{p} modulo~\eqref{e.monday}, \eqref{e.compute_a=}, \eqref{e.GFF_compute}, and~\eqref{e.f_n,d_ratio}]

Fix any $n\in 2\N$ and any $\dst \in (0,1)$. Set
\begin{align}\label{e.X(n,d)=}
    X(n,\dst):=\Ll\{x\in(\R^2)^n:\:\max_i|x_i|\leq \frac{1}{d},\, \min_{i\neq j}|x_i-x_j|\geq d\Rr\}.
\end{align}
In Lemma~\ref{l.e.monday}, we will expand $\mathcal{A}_{F_{x,\xi,\eps}}(\mcl L)$ over connection patterns and get
\begin{align}\label{e.monday}
    \phi_{\delta\Z^2}\Ll[\mathcal{A}_{F_{x,\xi,\eps}}(\mcl L)\Rr]=\bc_\delta(\eps)^n\sum_{\pa\in\mathfrak{P}}a(\xi,\pa)\phi_{\delta\Z^2}\Ll[E_\eps(\pa)\Rr]  +O\Ll(\eps^{\frac{n}{8}+c+o_{\delta/\eps}(1)}\Rr), \tag{A}
\end{align}
uniformly in $x\in X(n,\dst)$, for each $\xi\in\Xi_\tau$ with $\tau\in4\Z$, where $c>0$ is a constant, $\bc_\delta(\eps)$ is a deterministic factor given in~\eqref{e.bc^delta=} which, by Lemma~\ref{l.kappa}, is bounded from above and below away from zero as long as $\delta/\eps$ is small. Let us clarify the error term. 
There is some constant $C$ such that $|O\Ll(\eps^{\frac{n}{8}+c+o_{\delta/\eps}(1)}\Rr)|\leq C \eps^{\frac{n}{8}+c+o_{\delta/\eps}(1)}$ uniformly in $x\in X(n,\dst)$, for every $0<\delta<\eps<\dst/2$, where $o_{\delta/\eps}(1)$ only depends on $\delta/\eps$ and vanishes as $\delta/\eps$ tends to $0$.

A key combinatorial fact recorded in Lemma~\ref{l.cancel} is that
\begin{align}\label{e.compute_a=}
\sum_{\tau\in4\Z\cap[-n,n]}\sum_{\xi\in\Xi_\tau} a(\xi,\pa)=
\begin{cases}
    2^{n-1},\qquad&\forall \pa \in \mathfrak{P}_+,  
    \\
    0,\qquad&\forall \pa\notin\mathfrak{P}_+.
\end{cases} \tag{B}
\end{align}
Now, summing~\eqref{e.monday} over $\xi\in\Xi_\tau$ and $\tau\in 4\Z\cap[-n,n]$ and using~\eqref{e.compute_a=}, we get
\begin{align}\label{e.sum_P=sum_phi}
    \bc_\delta(\eps)^n2^{n-1}\sum_{\pa\in\mathfrak{P}_+}\phi_{\delta\Z^2}\Ll[E_\eps(\pa)\Rr] = \sum_{\tau\in4\Z\cap[-n,n]}\sum_{\xi\in\Xi_\tau} \phi_{\delta\Z^2}\Ll[\mathcal{A}_{F_{x,\xi,\eps}}(\mcl L)\Rr] +O\Ll(\eps^{\frac{n}{8}+c+o_{\delta/\eps}(1)}\Rr).
\end{align}

The left-hand side of~\eqref{e.sum_P=sum_phi} can be related to~\eqref{e.<S..S>=sum_P} via
\begin{align}\label{e.E_0(P)-E_eps(P)}
    \Ll|\phi_{\delta\Z^2}\Ll[E_0(\pa)\Rr] - g_\eps(\delta)^n \phi_{\delta\Z^2}\Ll[E_\eps(\pa)\Rr]\Rr|\leq C'\eps^{c'} \phi_{\delta\Z^2}\Ll[E_0(\pa)\Rr]
\end{align}
uniformly over $(x_1,\dots,x_n)\in(\R^2)^n$ satisfying $\max_i|x_i|\leq 1/\dst$, for some constants $C',c'>0$,
where
\begin{align}\label{e.g_eps(delta)_control}
    g_\eps(\delta):= \phi_{\delta\Z^2}[0\longleftrightarrow \infty\,|\,B_\eps(0)\longleftrightarrow\infty] \asymp \pi_1(\eps \delta^{-1})=(\eps\delta^{-1})^{-1/8+o_{\delta/\eps}(1)}\qquad\text{as }\delta/\eps\to0. 
\end{align}
The estimate~\eqref{e.E_0(P)-E_eps(P)} follows from the mixing property conditioned on one-arms, which can be adapted from the coupling argument in~\cite[Section~3]{garban2013pivotal}\footnote{A similar form of~\eqref{e.E_0(P)-E_eps(P)} was used in \cite[Proof of Corollary~1.4]{duminil2020rotational} that also adapts the said argument from~\cite{garban2013pivotal}. Also, see~\cite[(4.9)]{qequal4} for a similar version used to compute the two-point function.}.
The bound on $g_\eps(\delta)$ in~\eqref{e.g_eps(delta)_control} follows from the one-arm exponent rigorously computed in~\cite[Theorem~1.2]{qequal4}.

The right-hand side in~\eqref{e.sum_P=sum_phi} will be computed explicitly in Lemmas~\ref{l.GFF_test_function} and~\ref{l.e.GFF_compute} using Theorem~\ref{thm:M formula}:
\begin{equation}\label{e.GFF_compute}
\begin{alignedat}{2}
\lim_{\delta\to0}\eps^{-n/8}\phi_{\delta\Z^2}\Ll[\mathcal{A}_{F_{x,\xi,\eps}}(\mcl L)\Rr]
&= e^{nc_0}(1+o_\eps(1))\prod_{1\le i<j\le n}|x_i-x_j|^{\frac{\xi_i\xi_j}{4}},
\qquad &\forall \xi\in\Xi_0,
\\
\lim_{\delta\to0}\eps^{-n/8}\phi_{\delta\Z^2}\Ll[\mathcal{A}_{F_{x,\xi,\eps}}(\mcl L)\Rr]
&= 0, \qquad &\forall \xi\in\Xi_\tau,\ \forall \tau\in 4\Z\setminus\{0\},
\end{alignedat} \tag{C}
\end{equation}
for some absolute constant $c_0$, where the convergence is uniform in $x\in X(n,\dst)$.

We are ready to conclude. Let $\rho(t)\to0$ (as $t\to0$) dominate all occurrences of
$o_{\delta/\eps}(1)$ in the estimates above. We may
choose $\eps(\delta)>0$ decaying slowly (see Lemma~\ref{lem:choose_epsilon} for the detail) so that
\begin{align}
    \eps(\delta)\to0,\qquad \tfrac{\delta}{\eps(\delta)}\to0,\qquad
    \eps(\delta)^{o_{\delta/\eps(\delta)}(1)}\to1, \label{e.eps(delta)_properties}
\end{align}
and such that \eqref{e.GFF_compute} holds with $\eps$ replaced by $\eps(\delta)$, uniformly in
$x\in X(n,d)$. Notice that~\eqref{e.monday} and thus~\eqref{e.sum_P=sum_phi} also hold with
$\eps=\eps(\delta)$.

Inserting~\eqref{e.E_0(P)-E_eps(P)} and~\eqref{e.GFF_compute} (with $\eps(\delta)$) into~\eqref{e.sum_P=sum_phi} and recalling~\eqref{e.<S..S>=sum_P}, we get
\begin{align}\label{e.cvg_f_n,d}
    \lim_{\delta\to0}f_{n,\dst}(\delta)^{-n} \E_\delta\Ll[ \S_{(x_1)_\delta}\cdots \S_{(x_{n})_\delta}\Rr] = 2^{-n}e^{nc_0}\sum_{\xi\in\Xi_0}\prod_{1\leq i<j\leq n}|x_i-x_j|^{\frac{\xi_i\xi_j}{4}}
\end{align}
uniformly in $x\in X(n,\dst)$,
where we have set $f_{n,\dst}(\delta) = \frac{\eps(\delta)^{1/8}g_{\eps(\delta)}(\delta)}{\bc_\delta(\eps(\delta))}$. 
Using~\eqref{e.g_eps(delta)_control}, $\bc_\delta(\eps)\asymp 1$ for $\delta/\eps$ sufficiently small, and~\eqref{e.eps(delta)_properties}, we have $f_{n,\dst}(\delta)=\delta^{1/8+o_\delta(1)}$.
Lastly, we will show in Lemma~\ref{l.f_n,d} that
\begin{align}\label{e.f_n,d_ratio}
    \lim_{\delta\to0} \frac{f_{n,\dst}(\delta)}{f_{2,\frac{1}{2}}(\delta)}=1 \qquad\text{for every $n\in 2\N$ and $d\in(0,1)$}. \tag{D}
\end{align}
Hence, by setting $f(\delta)=f_{2,\frac{1}{2}}(\delta)$, the desired result in~\eqref{e.p}, together with the local uniformity of the convergence, follows from~\eqref{e.cvg_f_n,d} and~\eqref{e.f_n,d_ratio}.
This completes the proof of Proposition~\ref{p} modulo \eqref{e.monday}, \eqref{e.compute_a=}, \eqref{e.GFF_compute}, and~\eqref{e.f_n,d_ratio}, which will be proved in Section~\ref{s.rem_proofs}.
\end{proof}

\section{Remainder of the proof of Proposition~\ref{p}}\label{s.rem_proofs}

\subsection{Derivation of~(\ref{e.monday}): loop observables and connection patterns}

Write $x=(x_i)_{i\in\llbracket1,n\rrbracket}$ and, slightly abusing the notation, we also view $x$ as the set of these points.
Recall $\mcl L$ from~\eqref{e.cL=}.
Given $x$ and $\eps>0$, we set
\begin{align*}\calL_{x,\eps} &= \Ll\{\text{loops in $\mathcal L$ that do not intersect any of the balls $B_\eps(x_i)$ for $1\le i\le n$}\Rr\} . \end{align*}
For every $0<\delta<\eps$, we consider the {\em normalization factor}
\begin{equation}\label{e.bc^delta=}
    \bc_\delta(\eps):=\phi_{\delta\Z^2}\Ll[\mathcal A_{\frac1{2\eps^2}\one_{B_\eps(0)}}(\mathcal L)\ \Big|\ 0\notin\itr(\ell),\,\forall \ell \in \calL\Rr].
\end{equation}
Although the test function in~\eqref{e.bc^delta=} does not have total mass in
$2\pi\Z$, under the conditioning no loop surrounds $B_\eps(0)$, and the product
is interpreted in the same finite sense.

We need the following inputs from~\cite{qequal4}.

\begin{lemma}[{\cite[Lemma~3.3]{qequal4}}]\label{l.kappa}

There exists $c>0$ such that $c\le \bc_\delta(\eps)\le 1$ for every $0<\delta<c\eps$.
\end{lemma}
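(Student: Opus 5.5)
The upper bound will be immediate and the lower bound will come from a second-moment estimate on the angles. Write $F=\frac1{2\eps^2}\one_{B_\eps(0)}$ and, for a loop $\ell$, set $\theta_\ell:=\int_{\itr(\ell)}F=\frac{|\itr(\ell)\cap B_\eps(0)|}{2\eps^2}\in[0,\pi/2]$. Under the conditioning in~\eqref{e.bc^delta=} no loop surrounds $0$. Hence every factor $\cos\theta_\ell$ lies in $[0,1]$, and only loops meeting $B_\eps(0)$ contribute a factor different from $1$. This gives $\bc_\delta(\eps)\le 1$ at once.

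For the lower bound I will exhibit an event of conditional probability at least $1/2$ on which $\prod_\ell\cos\theta_\ell\ge c$. Fix a small $\eta>0$. On the event $G_\eta$ that $\theta_\ell\le \pi/2-\eta$ for every loop, we have $\log\cos\theta_\ell\ge -C_\eta\theta_\ell^2$. Therefore
\[
\prod_\ell\cos\theta_\ell\ge \exp\Bigl(-C_\eta\sum_{\ell}\theta_\ell^2\Bigr)\quad\text{on }G_\eta .
\]
The proof then reduces to two estimates, both uniform in $\delta<c\eps$.

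\textbf{Step 1.} I will show $\E\bigl[\sum_\ell\theta_\ell^2\,\big|\,\cdot\bigr]\le C$. I split the loops meeting $B_\eps(0)$ according to the dyadic scale $r=2^{-k}\eps$ of their diameter, with $\delta\le r\le 2\eps$. A loop of scale $r$ has $\theta_\ell\lesssim (r/\eps)^2$. The expected number of such loops meeting $B_\eps(0)$ is $\lesssim(\eps/r)^2$: this follows from a covering by boxes of size $r$ together with RSW, which bounds the expected number of loops of scale $r$ crossing an $r$-annulus, and from quasi-multiplicativity to pass to the conditioned measure. The total contribution is then $\sum_k 2^{2k}\cdot 2^{-4k}<\infty$.

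\textbf{Step 2.} I will show $\phi[G_\eta^\comple\mid\cdot]\le 1/4$ for $\eta$ small. A loop with $\theta_\ell>\pi/2-\eta$ must enclose all of $B_\eps(0)$ except a region of area at most $2\eta\eps^2$ while not surrounding $0$. It therefore passes within distance $O(\sqrt\eta\,\eps)$ of $0$ and also travels macroscopically around $B_\eps(0)$. This forces an arm-type event from $B_{C\sqrt\eta\eps}(0)$ to distance $\eps$ in the conditioned measure: the loop near $0$ together with its non-surrounding constraint gives at least a two-arm event, on top of the conditioning. Its probability is $\lesssim\eta^{c}$ by RSW and the polynomial decay of arm events.

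Markov's inequality applied to Step 1 then yields $\phi[\sum\theta_\ell^2\le 4C\mid\cdot]\ge 3/4$. Combined with Step 2 and the display above, this gives $\bc_\delta(\eps)\ge \tfrac12 e^{-4CC_\eta}$.

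The main obstacle is making the conditioning ``no loop surrounds $0$'' rigorous on the infinite lattice, together with the arm bounds in Step 2. Almost surely infinitely many loops surround $0$, so the conditioning must be understood as in~\cite{qequal4}: either as a limit over finite boxes, or equivalently by removing the loops around $0$ via conditioning on a one-arm-type event at $0$. I would first try to use the mixing and quasi-multiplicativity under one-arm conditioning (the same input behind~\eqref{e.E_0(P)-E_eps(P)}) to transfer the unconditioned RSW estimates of Steps 1--2 to this conditioned law, with constants uniform once $\delta/\eps$ is small.
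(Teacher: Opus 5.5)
The paper gives no proof of this lemma; it quotes it from \cite[Lemma~3.3]{qequal4}. Your proposal is therefore a self-contained substitute rather than a variant of an in-text argument, and as such it is sound.

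The upper bound is immediate. For the lower bound, the key choice is to expand to second order, $\log\cos\theta\approx-\theta^2/2$. At scale $r$ there are $\asymp(\eps/r)^2$ loops meeting $B_\eps(0)$, each with angle $\asymp(r/\eps)^2$. Hence $\sum_\ell\theta_\ell$ grows like $\log(\eps/\delta)$, so a first-order bound such as $\cos\theta\ge 1-2\theta/\pi$ would fail. By contrast, $\sum_\ell\theta_\ell^2$ is summable over scales. The remaining danger is an angle near $\pi/2$. You correctly trace it to a loop passing within $O(\sqrt\eta\,\eps)$ of $0$; this is forced, since $B_{d(0,\ell)}(0)\cap\itr(\ell)=\varnothing$ whenever $0\notin\itr(\ell)$. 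Such a loop must also reach distance of order $\eps$.

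Citing buys the authors brevity and the precise set-up of the conditioning. Your route makes explicit that only RSW, mixing, quasi-multiplicativity, and the gap between mixed two-arm and one-arm exponents are used.

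Three points need to be made precise when you write it out.

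(i) The event $\{0\notin\itr(\ell)\ \forall\ell\}$ is exactly $\{0\leftrightarrow\infty\}$, which is null at $p_\crit$. Conditioning on $\{0\leftrightarrow\partial\Lambda_N\}$ does not by itself remove loops surrounding $0$. The infinitely many loops surrounding $\Lambda_N$ remain, and each contributes $\cos(\pi/2)=0$. So your ``equivalently'' requires restricting the product, e.g.\ to loops contained in $\Lambda_N$, before letting $N\to\infty$, with all bounds uniform in $N$.

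(ii) In Step~1, for boxes within distance $O(r)$ of $0$, the transfer to the conditioned measure is not a bare quasi-multiplicativity statement. The loop count is not monotone and sits next to the arm. Split loops of diameter at least $r$ according to whether they enter $B_{r/4}(0)$ or not.
\begin{itemize}
\item If they enter, count crossings of an annulus centred at $0$.
\item If not, count crossings of small annuli at distance $\asymp r$ from $0$.
\end{itemize}
In both cases use mixing against an inner arm stopped at a smaller scale and an outer arm started at a larger one, then reglue by quasi-multiplicativity. Loops of diameter larger than $2\eps$, which your dyadic range omits, contribute $O(1)$ by the same crossing count.

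(iii) In Step~2, the primal side of the offending loop may be the cluster of $0$ itself, namely when the loop bounds a large hole of that cluster. Then, beyond the conditioning, you gain only one dual arm from $B_{C\sqrt\eta\eps}(0)$ to distance $c\eps$. The factor $\eta^c$ comes from the mixed primal--dual estimate $\pi_2(r,R)\le (r/R)^c\pi_1(r,R)$, not from ``two arms on top of the conditioning''. This estimate is standard under RSW but should be invoked explicitly.

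With these in place, fix $\eta$ first and then $c\le c_\eta$. Your Markov-plus-union argument then yields $\bc_\delta(\eps)\ge\frac12 e^{-4CC_\eta}$.
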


\begin{proposition}[{\cite[Proposition~3.5]{qequal4}}]\label{prop:forget small loops}
There exists $c>0$ such that the following holds. For every $n\in\N$ and $\dst>0$, there is a constant $C>0$ such that for every $0<\delta\le \eps<\dst/2$, every set of $n$ points $x\in X(n,\dst)$ (see~\eqref{e.X(n,d)=}), and every $\xi\in\Xi_\tau$ (see~\eqref{e.Xi_tau=}) for some $\tau\in4\Z$, we have
\begin{equation*}\Big|\bc_\delta(\eps)^n\phi_{\delta\Z^2}\Ll[\mathcal A_{F_{x,\xi,\eps}}\Ll(\mathcal L_{x,\eps}\Rr)\Rr]-\phi_{\delta\Z^2}[\mathcal A_{F_{x,\xi,\eps}}(\mathcal L)]\Big|\le C\pi_1\Ll(\tfrac{\eps}{\delta},\tfrac{\dst}{\delta}\Rr)^n(\tfrac\eps\dst)^c.\end{equation*}

\end{proposition}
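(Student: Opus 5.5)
The statement to prove is Proposition~\ref{prop:forget small loops}. I would prove it by splitting the loops of $\mathcal L$ into three classes: large loops avoiding all balls (those in $\mathcal L_{x,\eps}$), small loops located near a single ball $B_\eps(x_i)$, and ``bad'' loops that are macroscopic and meet some $B_\eps(x_i)$. Loops not surrounding any part of $\supp F_{x,\xi,\eps}$ contribute the factor $1$, so only the following matter:
\begin{itemize}
\item loops surrounding points of some ball $B_\eps(x_i)$ while staying inside $B_{\dst/2}(x_i)$;
\item loops in $\mathcal L_{x,\eps}$;
\item loops that intersect some $B_\eps(x_i)$ and reach distance $\dst/2$ from it.
\end{itemize}
The goal is to show that, up to the stated error, the local loops near each $x_i$ factorize and each contributes $\bc_\delta(\eps)$.

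First I would isolate the bad event $G$: for some $i$, a loop of $\mathcal L$ crosses the annulus $B_{\dst/2}(x_i)\setminus B_\eps(x_i)$ while intersecting $B_\eps(x_i)$. On the complement, for each $i$ the innermost loop surrounding $B_\eps(x_i)$ but lying in $B_{\dst/2}(x_i)$ exists or not. Crossing of such a loop means a primal or dual arm between scales $\eps$ and $\dst/2$ around $x_i$. The key estimate is
\[
\phi[\,G\cap\{\text{arms near all other } x_j\}\,]\lesssim \pi_1(\eps/\delta,\dst/\delta)^n(\eps/\dst)^c.
\]
This follows by quasi-multiplicativity and mixing (RSW for $q=4$), together with the fact that a ``two-arm-type'' configuration (a loop touching the small ball and exiting) costs an extra polynomial factor $(\eps/\dst)^c$ relative to a one-arm event. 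However, both sides of the target are bounded by $1$, so I must argue that the integrand $\mathcal A$ is effectively small unless arms exist near each $x_i$. The point is this: if no macroscopic loop separates $B_\eps(x_i)$ from far away, i.e.\ $B_\eps(x_i)$ is surrounded by a loop of diameter $\le\dst/2$ containing nothing else, then the local cosine average contributes a factor bounded by something like $\pi_1(\eps/\delta,\dst/\delta)$. I would obtain this using the weight $\xi_i\pi/2$ and the symmetry that averaging over the local configuration inside the surrounding loop kills the mean unless an arm reaches the scale $\dst$, as in \cite{qequal4}.

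Second, on $G^c$ I would condition on the outer configuration (loops outside $\bigcup_i B_{\dst/2}(x_i)$, together with the outermost loops inside each annulus surrounding $B_\eps(x_i)$). By the domain Markov property, the configurations inside the disjoint regions around each $x_i$ become conditionally independent, and the product $\mathcal A$ factorizes into $\mathcal A_{F}(\mathcal L_{x,\eps})$ times local factors. Each local factor is the cosine product over loops surrounding parts of $B_\eps(x_i)$ with no loop surrounding the whole ball, which is exactly the situation defining $\bc_\delta(\eps)$, except that the conditional law is in a random domain rather than in the full plane. I would then replace this conditional expectation by $\bc_\delta(\eps)$, with error $(\eps/\dst)^c$, using mixing (ratio-mixing for FK measures at scale separation $\eps\ll\dst$). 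The constraint $\tau\in 4\Z$ ensures that loops surrounding several balls contribute $\cos(\pi\tau'/2)\in\{0,\pm1\}$ consistently, so the factorization sign is correct.

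The main obstacle is the first step: proving that the error is small \emph{relative to} $\pi_1^n$ rather than in absolute terms, which requires tying the nonvanishing of local cosine averages to the existence of arms and combining the separation-of-arms and mixing estimates uniformly over $x\in X(n,\dst)$.
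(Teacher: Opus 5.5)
Your treatment of the bad event $G$ is fine; it is the $E_{\mathrm{touch}}$ estimate in the proof of Lemma~\ref{l.e.monday}. The localisation to arm events is also easier than you suggest, since no averaging or symmetry is involved. A loop of $\calL_{x,\eps}$ lying in $B_{\dst/2}(x_i)$ and surrounding $B_\eps(x_i)$ contributes exactly $\cos(\xi_i\pi/2)=0$ in \eqref{e.A_F(L)=}. So both $\mathcal A_{F_{x,\xi,\eps}}(\calL_{x,\eps})$ and $\mathcal A_{F_{x,\xi,\eps}}(\calL)$ vanish identically off $\bigcap_i\mathrm{Arm}_i$, where $\mathrm{Arm}_i$ is the event that a primal or a dual path crosses $B_{\dst/2}(x_i)\setminus B_\eps(x_i)$. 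Mixing then gives $\phi_{\delta\Z^2}[\bigcap_i\mathrm{Arm}_i]\lesssim\pi_1(\eps/\delta,\dst/\delta)^n$ uniformly on $X(n,\dst)$. The whole content of the proposition is therefore your second step: on $\bigcap_i\mathrm{Arm}_i$, the product of the factors from loops touching the balls must be replaced by the constant $\bc_\delta(\eps)^n$ at \emph{relative} cost $(\eps/\dst)^c$. That step is where the gap is.

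\textbf{The decoupling you describe is not available.} On $\mathrm{Arm}_i$ there is no loop in the annulus around $x_i$ surrounding $B_\eps(x_i)$, so there are no ``outermost loops'' to condition on. Moreover, for the random-cluster measure, conditioning on the configuration outside disjoint regions does not in general make the regions conditionally independent. The weight $q^{k(\omega)}$ couples regions whose boundaries are linked through the outside, unless each region is cut off by a primal or dual circuit.

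\textbf{Ratio-mixing does not give the comparison with $\bc_\delta(\eps)$.} The law you must compare with the one defining $\bc_\delta(\eps)$ is the law near $B_\eps(x_i)$ \emph{conditioned on} an arm event. That event has probability of order $\pi_1(\eps/\delta,\dst/\delta)$ and involves every scale between $\eps$ and $\dst$. The ratio-mixing property \cite[Proposition~2.9]{duminil2022planar} only decorrelates events measurable inside $B_r$ and outside $B_R$, and says nothing about such a conditioning.

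\textbf{The missing ingredient is mixing conditioned on one-arm events.} Uniformly in the configuration outside $B_{\dst/2}(x_i)$, the law of $\omega$ in $B_{\sqrt{\eps\dst}}(x_i)$ given a primal (resp.\ dual) arm from $B_\eps(x_i)$ to $\partial B_{\dst/2}(x_i)$ should be within $C(\eps/\dst)^c$ in total variation of its law given a primal (resp.\ dual) arm to infinity. This is proved by a Garban--Pete--Schramm-type coupling \cite[Section~3]{garban2013pivotal}: arm separation, plus coupling through circuits of the arm's type in intermediate annuli. The paper quotes this proposition from \cite{qequal4} without proof, and this is the same input it invokes for \eqref{e.E_0(P)-E_eps(P)}.

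To use that coupling you must also do two further things:
\begin{enumerate}
\item Shrink $G$ to loops touching $B_\eps(x_i)$ and reaching distance $\sqrt{\eps\dst}$, so that the local factor is measurable in $B_{\sqrt{\eps\dst}}(x_i)$.
\item Invoke primal--dual symmetry of the local factor. The relative weights of primal and dual arms depend on the outside configuration and are correlated with the sign of $\mathcal A_{F_{x,\xi,\eps}}(\calL_{x,\eps})$ (cf.\ \eqref{e.AL=a}). By contrast, $\bc_\delta(\eps)$ in \eqref{e.bc^delta=} is a single type-blind constant.
\end{enumerate}
With these, replacing the local factors by $\bc_\delta(\eps)$ one point at a time yields the claimed bound. ``Ratio-mixing at scale separation'' alone does not.
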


The relation in~\eqref{e.monday}, together with the uniformity condition stated below the display, follows from the lemma below.
We need two simple facts derived from~\eqref{e.b_j=a(xi,pi)=} and the definition of $\mathfrak{P}_+$ in~\eqref{e.frakP_+=}:
\begin{align}
    \tau\in 4\Z,\quad  \xi \in \Xi_\tau\qquad &\Longrightarrow\qquad \sum_{j=1}^J b_j =\tau/2\in2\Z\label{e.sumb_j=0mod2} ;
    \\
    \label{e.p_jequal=>a=1}
    \tau\in 4\Z,\quad \xi \in \Xi_\tau,\quad \pa\in\mathfrak{P}_+ \qquad &\Longrightarrow \qquad a(\xi,\pa)=1.
\end{align}

\begin{lemma}\label{l.e.monday}
There exists $c>0$ such that the following holds. For every $n\in 2\N$ and $\dst>0$, there is a constant $C>0$ such that for every $0<\delta\le \eps<\dst/2$, every set of $n$ points $x\in X(n,\dst)$, and every $\xi\in\Xi_\tau$ for some $\tau\in4\Z$, we have
\begin{equation*}
\Ll|\bc_\delta(\eps)^n \sum_{\pa\in\mathfrak{P}}a(\xi,\pa)\phi_{\delta\Z^2}\Ll[E_\eps(\pa)\Rr]-\phi_{\delta\Z^2}[\mathcal A_{F_{x,\xi,\eps}}(\mathcal L)]\Rr|\le C\pi_1(\tfrac{\eps}{\delta},\tfrac{\dst}{\delta})^n(\tfrac\eps\dst)^c.\end{equation*}
\end{lemma}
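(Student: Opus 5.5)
By Proposition~\ref{prop:forget small loops}, it suffices to show that
\[
\phi_{\delta\Z^2}\Ll[\mathcal A_{F_{x,\xi,\eps}}(\mathcal L_{x,\eps})\Rr]=\sum_{\pa\in\mathfrak{P}}a(\xi,\pa)\,\phi_{\delta\Z^2}\Ll[E_\eps(\pa)\Rr]
\]
up to an error of order $\pi_1(\tfrac\eps\delta,\tfrac\dst\delta)^n(\tfrac\eps\dst)^c$. The factor $\bc_\delta(\eps)^n\le 1$ from Lemma~\ref{l.kappa} only multiplies this error by a bounded amount.

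The main step is a deterministic evaluation of $\mathcal A_{F_{x,\xi,\eps}}(\mathcal L_{x,\eps})$. Each loop $\ell\in\mathcal L_{x,\eps}$ avoids all the balls, so $\int_{\itr(\ell)}F_{x,\xi,\eps}=\frac{\pi}{2}\sum_{i:\,x_i\in\itr(\ell)}\xi_i$. Since $\tau\in4\Z$ and the total is $\tau\pi/2$, its cosine is well defined. Its value lies in $\{0,\pm1\}$: it vanishes exactly when an odd number of the $x_i$ lie inside $\ell$, and otherwise equals $(-1)^{\frac12\sum_{x_i\in\itr(\ell)}\xi_i}$.

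Next I would use the planar structure. The loops of $\mathcal L_{x,\eps}$ separate the plane into regions, each of which is either a primal cluster region or a dual cluster region. Two balls lie in the same region iff they are connected in $\omega$ by a primal or a dual path avoiding the surrounding loops. This defines a partition $(P_j)$ of the balls together with types $(p_j)$. On the event that every block is even, which is precisely $\bigcup_{\pa}E_\eps(\pa)$, I will check that the product over loops equals $a(\xi,\pa)$. The product over the loops surrounding blocks telescopes, using nesting parity: a loop bounding a region alternates primal and dual type, so crossing it toggles $p$. I would argue by induction on the nesting tree of regions that
\[
\sum_{\ell}\tfrac12\sum_{x_i\in\itr(\ell)}\xi_i\equiv\sum_j b_jp_j \pmod 2 .
\]
Each block contributes $b_j$ once for every loop enclosing it. The loops enclosing a primal region and those enclosing a dual region differ in parity, up to a global adjustment. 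That adjustment is absorbed by $\sum_j b_j=\tau/2\in2\Z$ from~\eqref{e.sumb_j=0mod2}; this is why $\tau\in4\Z$ is needed, and it makes the outermost convention (primal versus dual infinite region) irrelevant.

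If some block is odd, then some loop encloses an odd number of points. I would pick the innermost such loop and get $\mathcal A=0$, so these configurations contribute nothing.

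Finally, I need to account for configurations where the regions do not correspond to clean $p$-connections between the $\eps$-balls. Examples are a ball intersected by a loop that separates it internally, and the case where the notion ``$p_j$-connected'' for balls is ambiguous because both primal and dual paths exit a ball. The identity above already holds pointwise when each ball meets a unique region, and I would show this is always true, since loops in $\mathcal L_{x,\eps}$ miss the balls. The remaining discrepancy comes from the fact that $E_\eps(\pa)$ requires no primal or dual connection between distinct blocks, while the region decomposition might allow balls of the same region to be connected only by paths passing near another ball.

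The error terms are where I expect the main obstacle to lie, namely bounding the measure of such degenerate events. A degenerate event forces, near some $x_i$, extra arms of both types from scale $\eps$ to scale $\dst$, on top of the $n$ one-arm events. I would bound it by quasi-multiplicativity and the mixing property in the form $C\pi_1(\eps/\delta,\dst/\delta)^n(\eps/\dst)^c$, using that the polychromatic multi-arm exponent strictly exceeds the one-arm exponent; this is the same input used in the proof of Proposition~\ref{prop:forget small loops} in~\cite{qequal4}. Combining the two steps gives the lemma.
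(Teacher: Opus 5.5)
Your plan is the paper's proof. You reduce to $\mathcal L_{x,\eps}$ via Proposition~\ref{prop:forget small loops} and $|\bc_\delta(\eps)|\le 1$, and show $\mathcal A_{F_{x,\xi,\eps}}(\mathcal L_{x,\eps})=a(\xi,\pa)$ on $E_\eps(\pa)$ from the parity of separating loops together with $\sum_j b_j=\tau/2\in2\Z$. You note that $\mathcal A$ vanishes when some loop of $\mathcal L_{x,\eps}$ encloses an odd number of balls, and you control the leftover configurations by one polychromatic two-arm event and $n-1$ one-arm events via mixing.

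Two things the paper makes explicit that your sketch does not. Your ``degenerate'' configurations are stated loosely; the paper defines them as the event $E_{\mathrm{touch}}$ that some loop meets one ball $B_\eps(x_i)$ while its interior meets another ball $B_\eps(x_j)$. Also, your count of ``$n$ one-arm events'' is unjustified as written: the paper obtains them at the remaining points from $\mathcal A_{F_{x,\xi,\eps}}(\mathcal L_{x,\eps})\neq0$, since that forbids a loop surrounding $B_\eps(x_j)$ inside $B_{\dst/4}(x_j)$.
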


\begin{proof}
Fix $\pa\in\mathfrak{P}$. For $\ell\in\mcl L_{x,\eps}$, set
\begin{align*}
    I_\ell:=\Ll\{i\in\llbracket1,n\rrbracket:B_\eps(x_i)\subset\itr(\ell)\Rr\}
    \qquad\text{and}\qquad
    J_\ell:=\Ll\{j\in\llbracket1,J\rrbracket:P_j\subset I_\ell\Rr\}.
\end{align*}
On $E_\eps(\pa)$, each $I_\ell$ is a union of blocks $P_j$, since an interface loop cannot be crossed by a primal or dual connection.  Define
\begin{align*}
    N_j:=\#\Ll\{\ell\in\mcl L_{x,\eps}: j\in J_\ell,
    \ J_\ell\neq \llbracket1,J\rrbracket\Rr\},\qquad \forall j\in\llbracket1,J\rrbracket,
\end{align*}
so loops surrounding all of the $\eps$-balls are not counted. Due to
\begin{align*}
    \int_{\itr(\ell)}F_{x,\xi,\eps}=\frac{\pi}{2}\sum_{i\in I_\ell}\xi_i
    \stackrel{\eqref{e.b_j=a(xi,pi)=}}{=}\pi\sum_{j\in J_\ell}b_j,
\end{align*}
and~\eqref{e.sumb_j=0mod2}, loops with $J_\ell=\varnothing$ or $J_\ell=\llbracket1,J\rrbracket$ contribute $1$ to $\mathcal A_{F_{x,\xi,\eps}}(\mcl L_{x,\eps})$. Hence, we have
\begin{align*}
    \mathcal A_{F_{x,\xi,\eps}}(\mcl L_{x,\eps})
    =\prod_{\substack{\ell\in\mcl L_{x,\eps}:\; J_\ell\neq\varnothing,\\
    J_\ell\neq\llbracket1,J\rrbracket}}
    (-1)^{\sum_{j\in J_\ell}b_j}
    =(-1)^{\sum_{j=1}^J b_jN_j},\qquad \text{on $E_\eps(\pa)$}.
\end{align*}
Moreover, crossing an interface loop changes the primal/dual type, and therefore
\begin{align*}
    N_j-N_{j'}=p_j-p_{j'}\pmod{2},
    \qquad \forall j,j'\in\llbracket1,J\rrbracket.
\end{align*}
It follows that
\begin{align*}
    \sum_{j=1}^Jb_jN_j
    =\sum_{j=1}^Jb_j(p_j-p_1+N_1)
    \stackrel{\eqref{e.sumb_j=0mod2}}{=}
    \sum_{j=1}^Jb_jp_j\pmod{2},
\end{align*}
and thus (recall $a(\xi,\pa)$ from~\eqref{e.b_j=a(xi,pi)=})
\begin{align}\label{e.AL=a}
    \mathcal A_{F_{x,\xi,\eps}}(\mcl L_{x,\eps})
    =a(\xi,\pa),\qquad \text{on $E_\eps(\pa)$}.
\end{align}

On the other hand, let $E_\mathrm{touch}$ be the event that there is a loop $\lo$ satisfying $\lo\cap B_\eps(x_i)\neq \emptyset$ and $\itr(\lo)\cap B_\eps(x_j)\neq \emptyset$ for some $i\neq j$.
If $\omega\notin\Ll(\bigsqcup_{\pa\in\mathfrak{P}}E_\eps(\pa)\Rr)\cup E_\mathrm{touch}$, then, by the same planar loop/cluster correspondence, some loop $\ell\in\mcl L_{x,\eps}$ surrounds an odd number of the balls $B_\eps(x_i)$.
For such a loop, we have $\cos\Ll(\int_{\itr(\ell)}F_{x,\xi,\eps}\Rr)=0$ and thus $\mathcal A_{F_{x,\xi,\eps}}(\mcl L_{x,\eps})=0$ on the complement of $\Ll(\bigsqcup_{\pa\in\mathfrak{P}}E_\eps(\pa)\Rr)\cup E_\mathrm{touch}$. Therefore, we have
\begin{align}
    \Ll|\phi_{\delta\Z^2}\Ll[\mathcal A_{F_{x,\xi,\eps}}(\mcl L_{x,\eps})\Rr] - \sum_{\pa\in\mathfrak{P}}\phi_{\delta\Z^2}\Ll[\mathcal A_{F_{x,\xi,\eps}}(\mcl L_{x,\eps})\one_{E_\eps(\pa)}\Rr] \Rr| &\leq \phi_{\delta\Z^2}\Ll[\Ll|\mathcal A_{F_{x,\xi,\eps}}(\mcl L_{x,\eps})\Rr|\one_{E_\mathrm{touch}}\Rr] \notag
    \\
    & \leq \phi_{\delta\Z^2}\Ll[E_\mathrm{touch}\cap\Ll\{\mathcal A_{F_{x,\xi,\eps}}(\mcl L_{x,\eps})\neq 0\Rr\}\Rr] \label{e.|A-A|<}.
\end{align}
In the last inequality, we used the fact $|\mathcal A_{F_{x,\xi,\eps}}(\mcl L_{x,\eps})|\leq 1$, which is evident from~\eqref{e.A_F(L)=}.
On $E_\mathrm{touch}$, since there must be a loop touching some $B_\eps(x_i)$ and having diameter larger than $\dst/4$, there is a two-arm event from scale $\eps/\delta$ to $\dst/4\delta$ centered at $x_i$. To ensure $\mathcal A_{F_{x,\xi,\eps}}(\mcl L_{x,\eps})\neq 0$, for each $j\neq i$, there cannot be a loop surrounding $B_\eps(x_j)$ inside $B_{\dst/4}(x_j)$; hence, there must be a one-arm event from scale $\eps/\delta$ to $\dst/4\delta$ centered at $x_j$. These arm events are well-separated, which allows us to use the mixing property (see~\cite[Proposition~2.9]{duminil2022planar}) to deduce that
\begin{align*}
    \phi_{\delta\Z^2}\Ll[E_\mathrm{touch}\cap\Ll\{\mathcal A_{F_{x,\xi,\eps}}(\mcl L_{x,\eps})\neq 0\Rr\}\Rr] \lesssim \pi_1(\tfrac{\eps}{\delta},\tfrac{\dst}{\delta})^{n-1}\pi_2(\tfrac{\eps}{\delta},\tfrac{\dst}{\delta})\lesssim \pi_1(\tfrac{\eps}{\delta},\tfrac{\dst}{\delta})^n(\tfrac\eps\dst)^c
\end{align*}
where $\pi_2$ denotes the two-arm probability and $c>0$ is an absolute constant.
This along with~\eqref{e.|A-A|<} and~\eqref{e.AL=a} implies
\begin{align*}
    \Ll|\phi_{\delta\Z^2}\Ll[\mathcal A_{F_{x,\xi,\eps}}(\mcl L_{x,\eps})\Rr]
    -\sum_{\pa\in\mathfrak{P}}a(\xi,\pa)\phi_{\delta\Z^2}\Ll[E_\eps(\pa)\Rr]\Rr|\lesssim \pi_1(\tfrac{\eps}{\delta},\tfrac{\dst}{\delta})^n(\tfrac\eps\dst)^c.
\end{align*}
Inserting this to Proposition~\ref{prop:forget small loops} gives the desired result.  
\end{proof}

\subsection{Derivation of (\ref{e.compute_a=}): charge completion and combinatorial cancellation}

If $\pa\in\mathfrak{P}_+$, by \eqref{e.p_jequal=>a=1} and $|\Xi_\tau|=\binom{n}{\frac{n-\tau}{2}}$, the left-hand side of \eqref{e.compute_a=} becomes $ \sum_{\tau\in 4\Z\cap[-n,n]} \binom{n}{\frac{n-\tau}{2}}$, which is equal to $\sum_{m\in 2\Z\cap [0,n]}\binom{n}{m}= 2^{n-1}$ if $n=0\pmod 4$ and equal to $\sum_{m\in (2\Z+1)\cap [0,n]}\binom{n}{m}= 2^{n-1}$ if $n=2\pmod 4$. The other case is proved in the lemma below.

\begin{lemma}[Charge completion and combinatorial cancellation]\label{l.cancel}
Let $n\in 2\N$. If $\pa\notin\mathfrak{P}_+$ (see~\eqref{e.frakP_+=}), then
\begin{align}\label{e.sumsuma=0}
    \sum_{\tau\in 4\Z\cap[-n,n]}\ \sum_{\xi\in\Xi_\tau}a(\xi,\pa)=0.
\end{align}
\end{lemma}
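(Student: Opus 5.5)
Since $a(\xi,\pa)$ depends on $\xi$ only through the parities of the block charges $b_j$ on blocks with $p_j=1$, I would encode the constraint $\sum_i\xi_i\in4\Z$ by a roots-of-unity filter. Because $n$ is even, $\sum_i\xi_i\in 2\Z$ automatically, so $\sum_i\xi_i\in4\Z$ exactly when $\tfrac12\sum_i\xi_i$ is even, i.e.
\[
\one_{\{\sum_i\xi_i\in4\Z\}}=\tfrac12\Big(1+(-1)^{\frac12\sum_i\xi_i}\Big)=\tfrac12\Big(1+\prod_{j=1}^J(-1)^{b_j}\Big).
\]
Hence the left-hand side of~\eqref{e.sumsuma=0} equals
\[
\frac12\sum_{\xi\in\{\pm1\}^n}\prod_{j=1}^J(-1)^{b_jp_j}+\frac12\sum_{\xi\in\{\pm1\}^n}\prod_{j=1}^J(-1)^{b_j(p_j+1)}.
\]
Since the $P_j$ partition $\llbracket1,n\rrbracket$ and $b_j$ depends only on $(\xi_i)_{i\in P_j}$, each sum factorizes over blocks:
\[
\sum_{\xi}\prod_j(-1)^{b_jq_j}=\prod_{j=1}^J S(|P_j|,q_j),\qquad S(m,q):=\sum_{\eta\in\{\pm1\}^m}(-1)^{q\cdot\frac12\sum_k\eta_k}.
\]

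Next I evaluate $S(m,q)$ for even $m\ge2$. Trivially $S(m,0)=2^m$. For $q\equiv 1\pmod 2$, write $(-1)^{\frac12\sum\eta_k}=\prod_k i^{\eta_k}$, which is legitimate because $\sum_k\eta_k$ is even, so $(-1)^{\frac12\sum\eta_k}=i^{\sum\eta_k}$. Then
\[
S(m,1)=\prod_{k=1}^m\sum_{\eta_k=\pm1}i^{\eta_k}=(i+i^{-1})^m=0.
\]
Thus a product $\prod_j S(|P_j|,q_j)$ is nonzero only if every $q_j$ is even. In the first sum this requires all $p_j=0$; in the second it requires all $p_j+1$ to be even, i.e.\ all $p_j=1$.

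If $\pa\notin\mathfrak{P}_+$, some $p_j=0$ and some $p_{j'}=1$. Then the first product contains the vanishing factor $S(|P_{j'}|,1)$ and the second contains $S(|P_j|,1)=0$, so both sums vanish, proving~\eqref{e.sumsuma=0}. As a consistency check, when $\pa\in\mathfrak{P}_+$ exactly one term survives and equals $\tfrac12\cdot2^n=2^{n-1}$, matching~\eqref{e.compute_a=}.

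The only delicate point is the identity $(-1)^{\frac12\sum\eta}=i^{\sum\eta}$, which requires each block size $|P_j|$ to be even. This holds by the definition of $\mathfrak{P}$, and it also guarantees that each $b_j$ is an integer, so all the signs are well defined.
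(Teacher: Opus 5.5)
Your proof is correct and is essentially the paper's argument. Your two full sums $\sum_\xi\prod_j(-1)^{b_jp_j}$ and $\sum_\xi\prod_j(-1)^{b_j(p_j+1)}=\sum_\xi(-1)^{\sum_{j:p_j=0}b_j}$ are, up to a global sign, the two vanishing binomial identities the paper gets from $(1\pm1)^{|P_j|}$ with the roles of primal and dual blocks swapped. Your filter $\tfrac12\bigl(1+(-1)^{\frac12\sum_i\xi_i}\bigr)$ is exactly the paper's step of adding or subtracting these identities according to the parity of $n/2$. Your character-and-factorization packaging is slightly cleaner: it absorbs that case split automatically and gives the value $2^{n-1}$ for $\pa\in\mathfrak{P}_+$ in the same computation.
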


\begin{proof}
Fix any $\pa$ as given by a partition $(P_j)_{1\leq j\leq J}$ and types of connections $(p_j)_{1\leq j\leq J}$ for some $J\in\N$.
We start with some notation. We define
\begin{align}\label{e.J_1=,J_0=}
    \mathfrak{J}_1 := \Ll\{j:\:p_j=1\Rr\}\qquad\text{and}\qquad \mathfrak{J}_0 := \Ll\{j:\:p_j=0\Rr\},
\end{align}
which contain the indices of partitions of points that are primal- and dual-connected, respectively. 
Since $p_j$'s are not all equal, both $\mathfrak{J}_1$ and $\mathfrak{J}_0$ are nonempty.
Given this $\pa$, for each $\tau\in 2\Z$ with $-n\leq\tau\leq n$, we define
\begin{align}\label{e.X(tau)=}
    X(\tau) := \Ll\{(\chi_j)_{j\in \llbracket 1,J\rrbracket}:\quad \chi_j \in \llbracket0,|P_j|\rrbracket,\,\forall j;\quad \sum_{j=1}^J\chi_j = \frac{n-\tau}{2}\Rr\}.
\end{align}
The interpretation of $X(\tau)$ is that, for each $j$, $\chi_j$ is a possible number of $-1$'s that $(\xi_i)_{i\in P_j}$ can take for $\xi\in\Xi_\tau$. 
A useful observation from~\eqref{e.X(tau)=} and $\sum_{j=1}^J|P_j|=n$ is that
\begin{align}\label{e.chi=bigcupX(tau)}
    \Ll\{(\chi_j)_{j\in \llbracket 1,J\rrbracket}:\quad \chi_j \in \llbracket0,|P_j|\rrbracket,\,\forall j\Rr\} = \bigcup_{\tau\in 2\Z\cap[-n,n]}X(\tau).
\end{align}
Accordingly, given each $\chi\in X(\tau)$, we define
\begin{align*}
    \Xi_\tau(\chi):= \Ll\{\xi\in \Xi_\tau:\quad\sum_{i\in P_j}\one_{\{\xi_i=-1\}}=\chi_j,\, \forall j\Rr\},
\end{align*}
which is the subcollection of $\Xi_\tau$ consisting of those $\xi$ such that $(\xi_i)_{i\in P_j}$ takes exactly $\chi_j$-many $-1$'s for each $j$.
So, we clearly have
\begin{align}\label{e.Xi_tau(chi)=...}
    \Xi_\tau = \bigsqcup_{\chi\in X(\tau)}\Xi_\tau(\chi).
\end{align}
Recall the definition of $b_j$ in~\eqref{e.b_j=a(xi,pi)=}. Then, for every $\chi\in X(\tau)$ and $\xi\in\Xi_\tau(\chi)$, we have $b_j = \frac{1}{2}(|P_j|-2\chi_j)$ and thus
\begin{align*}
    \sum_{j\in\mathfrak{J}_1}b_jp_j \stackrel{\eqref{e.J_1=,J_0=}}{=} \sum_{j\in \mathfrak{J}_1}b_j = \frac{1}{2}\sum_{j\in\mathfrak{J}_1}|P_j|- \sum_{j\in\mathfrak{J}_1}\chi_j.
\end{align*}
Using this, \eqref{e.Xi_tau(chi)=...}, and~\eqref{e.b_j=a(xi,pi)=}, we have that, for every $\tau\in 4\Z$,
\begin{align*}
    \sum_{\xi\in\Xi_\tau}a(\xi,\pa)&= \sum_{\chi\in X(\tau)}\sum_{\xi\in\Xi_\tau(\chi)}(-1)^{\frac{1}{2}\sum_{j\in\mathfrak{J}_1}|P_j|}(-1)^{\sum_{j\in\mathfrak{J}_1}\chi_j}\nonumber
    \\
    &= (-1)^{\frac{1}{2}\sum_{j\in\mathfrak{J}_1}|P_j|} \sum_{\chi\in X(\tau)} (-1)^{\sum_{j\in\mathfrak{J}_1}\chi_j}\prod_{j=1}^J \genfrac(){0pt}{}{|P_j|}{\chi_j},\end{align*}
where in the last equality we used $|\Xi_\tau(\chi)|=\prod_{j=1}^J \binom{|P_j|}{\chi_j}$. In view of this, to show~\eqref{e.sumsuma=0}, it suffices to show
\begin{align}\label{e.sumsum(-1)...=0}
     \sum_{\tau\in 4\Z\cap[-n,n]}\ \sum_{\chi\in X(\tau)} (-1)^{\sum_{j\in\mathfrak{J}_1}\chi_j}\prod_{j=1}^J \binom{|P_j|}{\chi_j} =0.
\end{align}

To show~\eqref{e.sumsum(-1)...=0}, we use some simple binomial formulas. Multiplying the following terms
\begin{align*}
\begin{cases}
    2^{|P_j|}= (1+1)^{|P_j|} = \sum_{\chi_j=0}^{|P_j|} \binom{|P_j|}{\chi_j},\qquad &\forall j \in \mathfrak{J}_0,
    \\
    0= (1-1)^{|P_j|} = \sum_{\chi_j=0}^{|P_j|} \genfrac(){0pt}{}{|P_j|}{\chi_j}(-1)^{\chi_j},\qquad  &\forall j \in \mathfrak{J}_1,
\end{cases}
\end{align*}
for all $j\in\llbracket 1, J\rrbracket$,
we get
\begin{align}\label{e.0=..J_1}
    0 = \sum_{\chi:\:\chi_j\in\llbracket 0,|P_j|\rrbracket,\,\forall j}(-1)^{\sum_{j\in\mathfrak{J}_1}\chi_j}\prod_{j=1}^J \binom{|P_j|}{\chi_j} \stackrel{\eqref{e.chi=bigcupX(tau)}}{=} \sum_{\tau\in 2\Z\cap[-n,n]}\ \sum_{\chi\in X(\tau)}\cdots
\end{align}
where we omitted the same summand in $\cdots$. 
On the other hand, multiplying
\begin{align*}
\begin{cases}
    0= (1-1)^{|P_j|} = \sum_{\chi_j=0}^{|P_j|} \binom{|P_j|}{\chi_j}(-1)^{\chi_j},\qquad &\forall j \in \mathfrak{J}_0,
    \\
    2^{|P_j|}= (1+1)^{|P_j|} = \sum_{\chi_j=0}^{|P_j|} \binom{|P_j|}{\chi_j},\qquad  &\forall j \in \mathfrak{J}_1,
\end{cases}
\end{align*}
for all $j\in\llbracket 1, J\rrbracket$, we obtain
\begin{align}\label{e.0=..J_0}
    0 = \sum_{\chi:\:\chi_j\in\llbracket 0,|P_j|\rrbracket,\,\forall j}(-1)^{\sum_{j\in\mathfrak{J}_0}\chi_j}\prod_{j=1}^J \binom{|P_j|}{\chi_j} \stackrel{\eqref{e.chi=bigcupX(tau)}}{=} \sum_{\tau\in 2\Z\cap[-n,n]}\ \sum_{\chi\in X(\tau)}\cdots.
\end{align}
We emphasize that it is crucial to have $\mathfrak{J}_0\neq \emptyset $ and $\mathfrak{J}_1\neq \emptyset$ to get~\eqref{e.0=..J_1} and~\eqref{e.0=..J_0} here.

To proceed, we analyze the parities of $\sum_{j\in\mathfrak{J}_0}\chi_j$ and $\sum_{j\in\mathfrak{J}_1}\chi_j$. The definition of $X(\tau)$ in~\eqref{e.X(tau)=} implies 
\begin{align*}
    \chi \in X(\tau)\qquad \Longrightarrow\qquad \sum_{j\in\mathfrak{J}_0}\chi_j + \sum_{j\in\mathfrak{J}_1}\chi_j = \frac{n-\tau}{2} .
\end{align*}
Hence, when $\frac{n}{2}$ is odd,  if $\tau\in 2 + 4\Z$, we have $(-1)^{\sum_{j\in\mathfrak{J}_0}\chi_j} = (-1)^{\sum_{j\in\mathfrak{J}_1}\chi_j}$ and if $\tau\in 4\Z$, then $(-1)^{\sum_{j\in\mathfrak{J}_0}\chi_j}=- (-1)^{\sum_{j\in\mathfrak{J}_1}\chi_j}$. In this case, subtracting~\eqref{e.0=..J_0} from~\eqref{e.0=..J_1}, we obtain~\eqref{e.sumsum(-1)...=0}. When $\frac{n}{2}$ is even, we have $(-1)^{\sum_{j\in\mathfrak{J}_0}\chi_j} = -(-1)^{\sum_{j\in\mathfrak{J}_1}\chi_j}$ if $\tau\in 2 + 4\Z$ and $(-1)^{\sum_{j\in\mathfrak{J}_0}\chi_j}= (-1)^{\sum_{j\in\mathfrak{J}_1}\chi_j}$ if $\tau\in 4\Z$. In this case, adding~\eqref{e.0=..J_0} to~\eqref{e.0=..J_1}, we obtain~\eqref{e.sumsum(-1)...=0}.
\end{proof}

\subsection{Derivation of (\ref{e.GFF_compute}): the GFF computation}

\begin{lemma}\label{l.GFF_test_function}
For every $n\in 2\N$, $\xi\in\Xi_0$, distinct points $x_1,\dots,x_n\in\R^2$, and $0<\eps\leq \min_{i\neq j}\{|x_i-x_j|\}/2$, let $F_{x,\xi,\eps}$ be given as in~\eqref{e.F_xi,eps=} and we have
\begin{align*}
    \lim_{\delta\to0}\phi_{\delta\Z^2}\Ll[\mathcal{A}_{F_{x,\xi,\eps}}(\mcl L)\Rr] & =\exp\left( \frac{1}{2\pi^2}\iint F_{x,\xi,\eps}(\rz)F_{x,\xi,\eps}(\rz')\log|\rz-\rz'|\d\rz\d\rz'\right) \\
    &= \eps^\frac{n}{8}e^{nc_0}\prod_{1\leq i<j\leq n}\Ll(|x_i-x_j|^{\frac{\xi_i\xi_j}{4}}e^{2\xi_i\xi_jc_\eps(x_i-x_j)}\Rr)
\end{align*}
where the convergence is uniform over $(x_1,\dots,x_n)\in(\R^2)^n$ satisfying $\max_i|x_i|\leq 1/\dst$ for any $\dst\in(0,1)$; the constant $c_0$ is given as in Proposition~\ref{p}; and we have set
\begin{align*}
    c_\eps(v)  := \frac{1}{8\pi^2}\iint_{B_1(0)\times B_1(0)}\log\frac{|v+\eps(\rz-\rz')|}{|v|}\d\rz\d\rz',\qquad\text{for every $v\in\R^2\setminus\{0\}$ and $\eps>0$.}
\end{align*}
\end{lemma}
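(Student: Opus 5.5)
The first equality follows directly from Theorem~\ref{thm:M formula}, once its hypotheses are checked for $F=F_{x,\xi,\eps}$. Since $\xi\in\Xi_0$, we have $\int F=\sum_i \frac{\xi_i}{2\eps^2}\pi\eps^2=\frac{\pi}{2}\sum_i\xi_i=0$. So $F$ is mean-zero, and $\int F\in2\pi\Z$ holds trivially, which makes $\mathcal A_F$ well-defined. As noted after~\eqref{e.F_xi,eps=}, $F\in\mcl H^s_0(\R^2)$ for any $s\in(0,\frac12)$, so the theorem gives the limit as the stated exponential.

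For uniformity in $x$ with $\max_i|x_i|\le1/\dst$, I would use translation invariance of $\phi_{\delta\Z^2}$. This reduces the claim to uniformity over the relative positions $x_i-x_1$, which range over a compact set. I then need continuity of the limit in $x$ together with an equicontinuity statement at the discrete level. For the latter, perturbing $x$ by $\eta$ changes $\mathcal A_F(\mcl L)$ only when some loop separates $B_\eps(x_i)$ from its shifted copy. This is controlled by arm-event estimates that are uniform in $\delta$. This uniformity step is the main obstacle. Alternatively, if Theorem~\ref{thm:M formula} holds uniformly over compact families of test functions in $\mcl H^s_0$, continuity of $x\mapsto F_{x,\xi,\eps}$ in $\mcl H^s$ for $s<\frac12$ gives uniformity directly. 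I would try this route first, checking whether \cite{qequal4} supplies it, and otherwise use a compactness/subsequence argument.

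The second equality is an explicit computation. Write
\[
\frac{1}{2\pi^2}\iint FF'\log|z-z'|=\sum_{i,j}\frac{\xi_i\xi_j}{8\pi^2\eps^4}\iint_{B_\eps(x_i)\times B_\eps(x_j)}\log|z-z'|\,\d z\,\d z'.
\]
Substitute $z=x_i+\eps u$ and $z'=x_j+\eps u'$, so that $\d z\,\d z'=\eps^4\,\d u\,\d u'$.

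For the diagonal terms $i=j$, with $\xi_i^2=1$, each term equals
\[
\frac{1}{8\pi^2}\iint_{B_1\times B_1}\bigl(\log\eps+\log|u-u'|\bigr)=\frac{\pi^2}{8\pi^2}\log\eps+c_0=\tfrac18\log\eps+c_0.
\]
Summed over $i$, this gives the factor $\eps^{n/8}e^{nc_0}$.

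For an off-diagonal pair $i\ne j$, set $v=x_i-x_j$. The term is
\[
\frac{\xi_i\xi_j}{8\pi^2}\iint\log|v+\eps(u-u')|=\xi_i\xi_j\Bigl(\tfrac{1}{8}\log|v|+c_\eps(v)\Bigr).
\]
The ordered pairs $(i,j)$ and $(j,i)$ contribute equally, because $c_\eps(-v)=c_\eps(v)$ under the swap $u\leftrightarrow u'$. Counting both orders therefore gives $\xi_i\xi_j\bigl(\tfrac14\log|x_i-x_j|+2c_\eps(x_i-x_j)\bigr)$ for each unordered pair. Exponentiating yields the stated product.

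Finally, the condition $\eps\le\min|x_i-x_j|/2$ ensures $|v+\eps(u-u')|>0$ off a null set, so $c_\eps$ is finite.
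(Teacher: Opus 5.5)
Your check of the hypotheses of Theorem~\ref{thm:M formula} and your evaluation of the double integral are correct and agree with the paper. After rescaling each pair of balls to $B_1(0)\times B_1(0)$, the diagonal terms give $\tfrac18\log\eps+c_0$ and the off-diagonal terms give $\xi_i\xi_j(\tfrac18\log|x_i-x_j|+c_\eps(x_i-x_j))$. The symmetry $c_\eps(-v)=c_\eps(v)$ accounts for the factor $2$.

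The gap is the uniformity in $x$. It is part of the statement, it is what Lemma~\ref{lem:choose_epsilon} uses later, and you do not prove it. Your preferred alternative has the right skeleton: $(x,\xi,\eps)\mapsto F_{x,\xi,\eps}$ is continuous into $\mcl H^s_0$, so it suffices that Theorem~\ref{thm:M formula} converges uniformly over compact families of test functions. But that uniformity is the whole content. A ``compactness/subsequence argument'' cannot extract it from pointwise convergence alone, because along $x^{(\delta)}\to x$ you still need $\phi_{\delta\Z^2}[\mathcal A_{F_{x^{(\delta)},\xi,\eps}}(\mcl L)]-\phi_{\delta\Z^2}[\mathcal A_{F_{x,\xi,\eps}}(\mcl L)]\to0$. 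That is equicontinuity in the test function, uniformly in $\delta$.

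The missing idea is the Baxter--Kelland--Wu identity $\phi_{\delta\Z^2}[\mathcal A_F(\mcl L)]=\E[e^{\mathrm{i}\int F h_\delta}]$, where $h_\delta$ is the six-vertex height function. By \cite{GFF6vertex}, $h_\delta$ converges in law in $\mcl H^{-s}([-L,L]^2)$, so it is tight. Hence for each $\theta>0$ there is $M$ with $|\varphi_\delta(f)-\varphi_\delta(g)|\le M\|f-g\|_{\mcl H^s_0([-L,L]^2)}+2\theta$ for all $\delta$. Arzel\`a--Ascoli on the compact set $\{F_{x,\xi,\eps}\}$ then upgrades pointwise convergence to uniform convergence. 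This is Lemma~\ref{lem:uniform_gff_convergence}.

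Your other suggestion, a direct discrete equicontinuity estimate, is not correct as described. Shifting $x_i$ to $x_i+\eta$ (with $\eta\in\R^2$ small) does not affect $\mathcal A_F(\mcl L)$ only through loops separating $B_\eps(x_i)$ from its translate. Let $F'$ be the shifted test function. Every loop whose interior meets $B_\eps(x_i)\triangle B_\eps(x_i+\eta)$ without containing both balls can have its factor $\cos(\int_{\itr(\ell)}F)$ change, and on $\delta\Z^2$ such loops occur at every scale between $\delta$ and $\eps$. The crude bound $\sum_\ell|\int_{\itr(\ell)}(F-F')|$ is therefore only of order $\frac{|\eta|}{\eps}\log\frac{\eps}{\delta}$, since a typical point is surrounded by $\asymp\log(\eps/\delta)$ loops. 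So arm-event bounds alone do not give uniformity in $\delta$; you would also need second-order control of the small loops, in the spirit of Proposition~\ref{prop:forget small loops}.

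The preliminary reduction via translation invariance is also not exact, since $\phi_{\delta\Z^2}$ is only invariant under $\delta\Z^2$-shifts. It is unnecessary anyway, because the relevant set of $x$ is already compact.
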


It is easy to see that
\begin{align}\label{e.|c_eps(v)|<}
    |c_\eps(v)| \leq \tfrac{\eps}{4(|v|-2\eps)},\qquad\text{whenever $2\eps < |v|$.}
\end{align}
This along with Lemma~\ref{l.GFF_test_function} gives the first line in~\eqref{e.GFF_compute}.

\begin{proof}
The first equality is a direct consequence of Theorem~\ref{thm:M formula}. It remains to compute the ensuing integral.
We write $F_{x,\xi,\eps} =\sum_{i=1}^n f_i$ for $f_i=\frac{\xi_i}{2\eps^2}\one_{B_\eps(x_i)}$ and $\la f,g\ra = \frac{1}{2\pi^2}\iint f(\rz)g(\rz')\log|z-z'|\d \rz\d \rz'$ for functions $f,g$. 
We have $\la F_{x,\xi,\eps},F_{x,\xi,\eps}\ra=\sum_{i,j=1}^n\la f_i,f_j \ra$. 
We can easily compute 
\begin{align*}
    \la f_i,f_i \ra=\frac{1}{8}\log\eps+c_0 \quad \text{and}\quad \la f_i,f_j\ra = \xi_i\xi_j\left(\frac{1}{8}\log|x_i-x_j|+c_\eps(x_i-x_j)\right) \quad \text{when $i\neq j$}.
\end{align*}
Inserting these into $\exp(\la F_{x,\xi,\eps},F_{x,\xi,\eps}\ra)$ gives the desired result. The uniformity in $(x_i)_{i\in\llbracket 1,n\rrbracket}$ is ensured by Lemma~\ref{lem:uniform_gff_convergence}.
\end{proof}

The second case in~\eqref{e.GFF_compute} follows from the lemma below.

\begin{lemma}\label{l.e.GFF_compute}
For every $n\in 2\N$, $\tau\in 4\Z\setminus\{0\}$, $\xi\in\Xi_\tau$, distinct points $x_1,\dots,x_n\in\R^2$, and $0<\eps\leq \min_{i\neq j}\{|x_i-x_j|\}/2$, we have $\lim_{\delta\rightarrow 0} \phi_{\delta\Z^2}\Ll[\mathcal{A}_{F_{x,\xi,\eps}}(\mcl L)\Rr] = 0$, where the convergence is uniform over $(x_1,\dots,x_n)\in(\R^2)^n$ satisfying $\max_i|x_i|\leq 1/\dst$ for any $\dst\in(0,1)$.
\end{lemma}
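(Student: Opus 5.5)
Since $\tau\neq0$, the test function $F_{x,\xi,\eps}$ has total mass $\int F_{x,\xi,\eps}=\frac{\pi}{2}\tau\in 2\pi\Z$, because $\tau\in4\Z$. So $\mathcal A_F$ is well defined, but $F$ is not mean-zero and Theorem~\ref{thm:M formula} does not apply directly. The plan is to reduce to the mean-zero case by adding a compensating charge far away and letting it escape to infinity.

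\textbf{Compensating charge.} Fix $R\gg 1/\dst$ and a point $y_R$ with $|y_R|=R$. Set
\[
G_R:=F_{x,\xi,\eps}-\frac{\tau}{2\eps^2}\one_{B_\eps(y_R)} .
\]
Then $G_R$ is mean zero and lies in $\mcl H^s_0$ for $s<1/2$, so Theorem~\ref{thm:M formula} gives
\[
\lim_{\delta\to0}\phi_{\delta\Z^2}[\mathcal A_{G_R}(\mcl L)]=\exp\big(\la G_R,G_R\ra\big),
\]
with $\la\cdot,\cdot\ra$ as in the proof of Lemma~\ref{l.GFF_test_function}. Expanding the exponent, the cross terms between the added ball and the balls $B_\eps(x_i)$ contribute
\[
-\tfrac{\tau}{8}\sum_i\xi_i\log R+O(1)=-\tfrac{\tau^2}{8}\log R+O(1),
\]
while the self term of the added ball contributes $\frac{\tau^2}{8}\log\eps+O(1)$. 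All other terms are bounded uniformly in $R$ for fixed $\eps$. Hence the limit is at most $C_\eps R^{-\tau^2/8}$, which tends to $0$ as $R\to\infty$, uniformly in $x$ with $\max|x_i|\le1/\dst$.

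\textbf{Comparing the two observables.} It remains to compare $\mathcal A_F(\mcl L)$ with $\mathcal A_{G_R}(\mcl L)$ at the discrete level, uniformly in $\delta$. The two products differ only on loops that separate $B_\eps(y_R)$ from some $B_\eps(x_i)$, or that surround $B_\eps(y_R)$ alone.
\begin{itemize}
\item For loops surrounding the $x$-cluster of balls but not $y_R$, the $F$-factor is $\cos(\pi\tau/2)=1$ while the $G_R$-factor is also $1$, so they agree.
\item For loops surrounding $y_R$ and some of the $x_i$ (separating them), or surrounding $y_R$ alone, the factors can differ. This requires a loop of diameter comparable to $R$ passing between $\{x_i\}$ and $y_R$, or a loop surrounding the small ball $B_\eps(y_R)$.
\end{itemize}
The second scenario is where the care is needed. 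The $G_R$-factor for a loop around $y_R$ alone is $\cos(\pi\tau/2)=1$ as well, so only loops that intersect $B_\eps(y_R)$, or separate the groups, matter. Using the loop-expansion of Lemma~\ref{l.e.monday}, I would group configurations according to the connection pattern of the $n+1$ balls. The added ball carries charge $\tau$ with $\tau/2$ even, so the sign bookkeeping is unchanged. The difference of expectations is then bounded by the probability that the $x$-balls are connected to the ball at distance $R$, which vanishes as $R\to\infty$ by RSW and one-arm decay (roughly $\pi_1(\dst/\delta,R/\delta)$ times quantities of order one).

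\textbf{Main obstacle: uniformity in $\delta$.} The delicate point is making the comparison error small uniformly in $\delta$ while the main term also depends on $\eps$ and $R$. A cleaner route I would try first avoids discrete comparison near $y_R$. Observe that
\[
\big|\phi[\mathcal A_F]-\phi[\mathcal A_{G_R}]\big|\le 2\,\phi\big[\exists\,\ell\in\mcl L \text{ separating } B_{1/\dst}(0)\text{ from } B_\eps(y_R),\ \text{or touching } B_\eps(y_R)\big]\cdot(\cdots),
\]
and rather bound it as follows. Condition on the loops, use the $\cos$ structure to write
\[
\mathcal A_F=\mathcal A_F(\text{loops inside }B_{\sqrt R}),
\]
and similarly for $G_R$. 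The two then differ only through loops crossing the annulus $B_{\sqrt R}\setminus B_{1/\dst}$, or loops near $y_R$ whose contributions factorize by the mixing property. The loops near $y_R$ yield a factor $\bc_\delta(\eps)$-like quantity bounded below. Combined with the decay $R^{-\tau^2/8}$, this gives $\limsup_\delta|\phi[\mathcal A_F]|\lesssim R^{-c}$. Letting $R\to\infty$ gives the claim, with uniformity in $x$ inherited from Lemma~\ref{lem:uniform_gff_convergence}.
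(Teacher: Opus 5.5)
\textbf{There is a genuine gap: the compensating ball contributes a factor that tends to zero, so it cannot be divided out.} Write $F=F_{x,\xi,\eps}$. After decoupling by mixing, and discarding loops that meet both far-apart regions, what you actually get is
\[
\phi_{\delta\Z^2}[\mathcal A_{G_R}(\mcl L)]=\phi_{\delta\Z^2}[\mathcal A_{F}(\mcl L)]\,\kappa_\delta+o_R(1),\qquad \kappa_\delta:=\phi_{\delta\Z^2}[\mathcal A_{g}(\mcl L)],\quad g:=\tfrac{\tau}{2\eps^2}\one_{B_\eps(0)},
\]
with $o_R(1)$ uniform in $\delta$. To conclude, you need $\kappa_\delta$ bounded below. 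It is not.

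Since $\int g=\frac{\pi\tau}{2}\in 2\pi\Z\setminus\{0\}$, $\kappa_\delta$ is itself a non-neutral observable, of exactly the kind the lemma says vanishes. Concretely, evenness of $\cos$ and mixing give $\kappa_\delta^2=\phi_{\delta\Z^2}[\mathcal A_{g(\cdot-u_R)-g(\cdot+u_R)}(\mcl L)]+o_R(1)$. Theorem~\ref{thm:M formula} sends the right-hand side to $e^{2\tau^2c_0}(\eps/2R)^{\tau^2/4}(1+o_R(1))$. Hence $\limsup_{\delta\to0}\kappa_\delta^2\lesssim(\eps/R)^{\tau^2/4}+o_R(1)$ for every $R$, i.e.\ $\kappa_\delta\to0$.

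The analogy with $\bc_\delta(\eps)$ is misleading. $\bc_\delta(\eps)$ is defined under the conditioning that no loop surrounds the centre. That conditioning is forced for a ball of charge $\pm1$, because a loop around such a ball alone contributes $\cos(\pi/2)=0$. For a ball of charge $\tau\in4\Z$, surrounding loops contribute $1$ and no arm is forced, and the loops cutting the ball average to zero. (Heuristically, the zero mode of the height function modulo its period is uniform.) So your bound $|\phi[\mathcal A_F]|\lesssim(|\phi[\mathcal A_{G_R}]|+o_R(1))/\kappa_\delta$ is vacuous as $\delta\to0$.

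The same issue breaks your first comparison. The many small loops cutting $B_\eps(y_R)$ change $\mathcal A_{G_R}$ but not $\mathcal A_F$, so $|\phi[\mathcal A_F]-\phi[\mathcal A_{G_R}]|$ is not bounded by a connection probability between the two groups. In fact any far-away compensator $H$ with $\int H=-\int F\neq0$ fails in the same way: both factors are non-neutral and both vanish.

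\textbf{The missing idea is to use the observable itself as the compensator.} The paper takes $F_R:=F(\cdot-u_R)-F(\cdot+u_R)$ with $u_R=(R,0)$.
\begin{itemize}
\item Since $\cos$ is even, $\mathcal A_{-F(\cdot+u_R)}=\mathcal A_{F(\cdot+u_R)}$, which by translation invariance has the same expectation as $\mathcal A_F$.
\item Mixing, together with the rarity of loops meeting both regions, then gives $\phi_{\delta\Z^2}[\mathcal A_{F_R}(\mcl L)]\to\phi_{\delta\Z^2}[\mathcal A_F(\mcl L)]^2$ as $R\to\infty$, uniformly in $\delta$.
\item $F_R$ is mean zero, so the GFF formula gives $\lim_{\delta\to0}\phi_{\delta\Z^2}[\mathcal A_{F_R}(\mcl L)]=O(R^{-(k_+-k_-)/4})=O(R^{-\tau^2/4})$.
\item Interchanging the two limits yields $\lim_{\delta\to0}\phi_{\delta\Z^2}[\mathcal A_F(\mcl L)]^2=0$, with no lower bound needed on any auxiliary factor.
\end{itemize}
A minor, harmless slip: your cross term should be $-\frac{\tau^2}{4}\log R$, not $-\frac{\tau^2}{8}\log R$, since the bilinear form counts each cross pair twice.
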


\begin{proof}
We write $F=F_{x,\xi,\eps}$ for brevity.
Let $R>0$ and consider $\calL_{\pm R}$ consisting of loops contained in $B_{\sqrt{R}}(\pm R,0)$. Set $u_R:=(R,0)$, define $F_R := F(\,\cdot\,-u_R)- F(\,\cdot\,+u_R)$, and we apply Lemma~\ref{l.GFF_test_function} to $F_R$ to see that, for $R$ sufficiently large,
\begin{align}
    \lim_{\delta\to0}\phi_{\delta\Z^2}\Ll[\mathcal{A}_{F_R}(\mcl L)\Rr]  &=\eps^\frac{2n}{8}
    e^{2nc_0}\prod_{1\leq i<j\leq n}|x_i-x_j|^{\frac{\xi_i\xi_j}{2}}e^{4\xi_i\xi_jc_\eps(x_i-x_j)} \notag
    \\
    &\cdot\prod_{i,j=1}^n|x_i-x_j-2u_R|^{-\frac{\xi_i\xi_j}{4}}e^{-2\xi_i\xi_jc_\eps(x_i-x_j-2u_R)} = O\Big(\prod_{i,j=1}^nR^{-\frac{\xi_i\xi_j}{4}}\Big) = O\Big(R^{-\frac{k_+-k_-}{4}}\Big) \label{e.limphi=0}
\end{align}
where $k_\pm = \#\{(i,j):\:\xi_i\xi_j=\pm1\}$. Setting $m_\pm=\#\Ll\{i:\:\xi_i=\pm1\Rr\}$, we have $m_\pm = \frac{n\pm\tau}{2}$. Since $k_+ = m_+^2+m_-^2$ and $k_-=2m_+m_-$, we have $k_+-k_-=(m_+-m_-)^2=\tau^2>0$. Therefore, the above vanishes when $R\to\infty$.

On the other hand, using the mixing property (see~\cite[Proposition~2.9]{duminil2022planar}) and the fact that the probability that a loop intersects both~$B_{\sqrt R}(-R,0)$ and~$B_{\sqrt R}(R,0)$ tends to~$0$ as~$R \to \infty$, we can deduce
\begin{align*}
	\phi_{\delta }[\mathcal A_{F}(\calL )]^2
	=\lim_{R\to \infty}\phi_\delta[\mathcal A_{ F(\cdot-(R,0))}(\calL )\mathcal A_{ F(\cdot+(R,0))}(\calL )]
	=\lim_{R\to \infty}\phi_\delta[\mathcal A_{ F_R}(\calL)].
\end{align*}
Furthermore, the convergences above are uniform in~$\delta$, since both the mixing property and the bound on the existence of large loops are uniform in~$\delta$. This allows us to interchange the limits and get
\begin{align*}
	\lim_{\delta\to0}\phi_{\delta }[\mathcal A_{ F}(\calL )]^2
	=\lim_{\delta\to0}\lim_{R\to \infty} \phi_\delta[\mathcal A_{ F_R}(\calL)] 
	=\lim_{R\to \infty}\lim_{\delta\to0} \phi_\delta[\mathcal A_{ F_R}(\calL)] 
	\stackrel{\eqref{e.limphi=0}}{=} 0
\end{align*}
as desired.
The uniformity in $(x_i)_{i\in\llbracket 1,n\rrbracket}$ is ensured by Lemma~\ref{lem:uniform_gff_convergence}.
\end{proof}

\subsection{Derivation of (\ref{e.f_n,d_ratio}): comparison of normalizations}
It is ensured by the lemma below.

\begin{lemma}\label{l.f_n,d}
For $n\in2\N$ and $\dst\in(0,1)$, let $f_{n,\dst}$ be given as in~\eqref{e.cvg_f_n,d}. Then, $\lim_{\delta\to0} \frac{f_{n,\dst}(\delta)}{f_{2,\frac{1}{2}}(\delta)}=1$.
\end{lemma}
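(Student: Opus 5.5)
The plan is to compare the $(n,d)$ convergence statement~\eqref{e.cvg_f_n,d} with the $(2,\frac12)$ one on a common configuration. If both hold with the same limiting right-hand side up to a power of the normalization, the ratio of normalizations must tend to one. The difficulty is that $f_{n,\dst}$ is defined through a choice $\eps(\delta)$ that may depend on $(n,\dst)$, so a priori different choices give different factors. Writing $\eps=\eps_{n,d}(\delta)$ and $\eps'=\eps_{2,1/2}(\delta)$, we have
\begin{align*}
\frac{f_{n,\dst}(\delta)}{f_{2,\frac12}(\delta)}=\Big(\frac{\eps}{\eps'}\Big)^{1/8}\frac{g_{\eps}(\delta)}{g_{\eps'}(\delta)}\cdot\frac{\bc_\delta(\eps')}{\bc_\delta(\eps)},
\end{align*}
and there is no direct reason for this to tend to one. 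So I would avoid comparing the factors directly and instead compare two-point functions.

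\textbf{Step 1.} I would first run the argument of~\eqref{e.cvg_f_n,d} for the two-point function, but with the parameters of $(n,\dst)$. The derivation of~\eqref{e.cvg_f_n,d} for $n=2$ uses only~\eqref{e.monday}, \eqref{e.compute_a=}, \eqref{e.GFF_compute} and~\eqref{e.E_0(P)-E_eps(P)}, and these hold for any admissible $\eps(\delta)$ satisfying~\eqref{e.eps(delta)_properties} and the uniform version of~\eqref{e.GFF_compute}. I would choose $\eps(\delta)$ (via Lemma~\ref{lem:choose_epsilon}) to be a single function admissible simultaneously for $n$ and for $2$. This is possible because the conditions are of the type ``decay slowly enough'', and the minimum of finitely many admissible choices remains admissible.

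\textbf{Step 2.} For $n=2$ and $d'=\min(d,\frac12)$, the set $X(2,d')$ contains both $X(2,\frac12)$ and the pair $(0,0),(1,0)$. The two-point limit yields
\begin{align*}
f_{2,d'}(\delta)^{-2}\,2\varrho(\delta)\to 2^{-1}e^{2c_0}\cdot 2,
\end{align*}
and the same holds with $f_{2,\frac12}$. Hence $f_{2,d'}/f_{2,\frac12}\to1$, since both squares are asymptotic to the same quantity $\varrho(\delta)$ times a constant.

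\textbf{Step 3.} To pass from $2$ to $n$ at the same $d'$: with the common $\eps(\delta)$ from Step 1, the formula $f=\eps^{1/8}g_\eps(\delta)/\bc_\delta(\eps)$ does not involve $n$. Therefore $f_{n,d'}=f_{2,d'}$ identically. Monotonicity in $d$ (using $X(n,d)\subset X(n,d')$) then lets me take $f_{n,d}=f_{n,d'}$.

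The main obstacle is justifying that $f_{n,d}$ is well defined independently of the choice of $\eps(\delta)$, or equivalently that the choice can be made uniform in $(n,d)$. If the definition in~\eqref{e.cvg_f_n,d} fixes an $(n,d)$-dependent $\eps$, I would instead argue by uniqueness of limits. The $n=2$ case of~\eqref{e.cvg_f_n,d}, run with the $(n,d)$-choice of $\eps$, still holds, because all inputs hold for any slower-decaying $\eps$. Evaluating it at $(0,0),(1,0)$ gives $f_{n,d}(\delta)^2\sim 2e^{-2c_0}\varrho(\delta)\sim f_{2,\frac12}(\delta)^2$, and positivity then gives ratio $\to1$.
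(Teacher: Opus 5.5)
\textbf{There is a genuine gap, and it sits exactly at the point you flag.} Neither of your two resolutions closes it.

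In the paper, $f_{n,\dst}$ is built from the $\eps(\delta)$ produced by Lemma~\ref{lem:choose_epsilon} for the pair $(n,\dst)$. That construction controls only the $n$-point errors $\Delta_\xi(\delta,\eps_k)$ with $\xi\in\{\pm1\}^n$ and $x\in X(n,\dst)$.

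Your fallback asserts that the two-point case of \eqref{e.cvg_f_n,d} still holds along this $\eps(\delta)$ ``because all inputs hold for any slower-decaying $\eps$''. This fails for two reasons:
\begin{itemize}
\item The $(n,\dst)$-choice is not slower-decaying than the $(2,\frac12)$-choice in any sense; the two are constructed independently.
\item The only nontrivial input, the two-point version of \eqref{e.GFF_compute} along $\eps(\delta)$, is not monotone in $\eps$. Theorem~\ref{thm:M formula} gives convergence for each fixed $\eps$ with no rate, so the two-point error at $\eps_k$ may still be large for $\delta\in[\delta_{k+1},\delta_k)$.
\end{itemize}
So nothing shows that $f_{n,\dst}(\delta)^2$ is asymptotic to the same fixed multiple of $\varrho(\delta)$ as $f_{2,\frac12}(\delta)^2$, and that is the whole content of the lemma. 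Your uniqueness-of-limits idea does compare two normalizations for the \emph{same} $n$ (e.g.\ $f_{n,\dst}$ and $f_{n,\dst'}$, evaluated at a common configuration). The difficulty is purely across different $n$.

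The same problem undermines the justification of Step 1. Conditions of ``decay slowly enough'' type are not inherited by the minimum of two choices, which is the faster one. Nor are they inherited by the maximum, since at each $\delta$ the max coincides with one of the two choices, for which only one family of GFF conditions was arranged.

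\textbf{How far your route can be repaired.} Step 1 can be fixed by rerunning the diagonal construction of Lemma~\ref{lem:choose_epsilon} with the finitely many $(n,\dst)$- and $(2,\frac12)$-conditions imposed simultaneously. With that joint choice, the two-point evaluation at $(0,0),(1,0)$ gives a valid and much shorter route to Proposition~\ref{p}. However, this proves the statement only for a re-chosen $\eps_{n,\dst}$, not for $f_{n,\dst}$ as already defined.

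\textbf{What the paper does instead.} It avoids any dependence on the choice by a clustering argument:
\begin{itemize}
\item It places $x$ and a translate $y_R$ of $y$ at distance $R$.
\item It shows the explicit limit for $x\cup y_R$ factorizes as $R\to\infty$, because sectors with $\sum_i\xi^x_i\neq0$ decay like a negative power of $R$.
\item It shows the lattice spin correlations factorize uniformly in $\delta$, via mixing and one-arm bounds.
\item It deduces $f_{m+n}^{m+n}\sim f_m^m f_n^n$ and concludes by induction.
\end{itemize}
This cross-$n$ comparison through genuine spin correlations is the missing ingredient if you want the lemma for arbitrary admissible choices of $\eps(\delta)$.
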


\begin{proof}
Fix $n\in2\N$ and choose $d_n>0$ such that $X(n,d_n)$ is nonempty (see~\eqref{e.X(n,d)=}). Set
$f_n(\delta):=f_{n,d_n}(\delta)$. If $X(n,d)$ is nonempty, then choosing
$x\in X(n,d)\cap X(n,d_n)$ and applying~\eqref{e.cvg_f_n,d} with both
$f_{n,d}$ and $f_{n,d_n}$ gives
\begin{align}\label{e.limf/f=1}
    \lim_{\delta\to0}\frac{f_{n,d}(\delta)}{f_n(\delta)}=1.
\end{align}
It remains to show
\begin{align}\label{e.f_n_ratio}
    \lim_{\delta\to0} \frac{f_{n}(\delta)}{f_{2}(\delta)}=1 \qquad\text{for every $n\in 2\N$}.
\end{align}

Let $m,n\in 2\N$. Fix any distinct points $x=(x_i)_{i\in\llbracket1,m\rrbracket}$ and $y=(y_i)_{i\in\llbracket1,n\rrbracket}$. For $R>0$, we set $y_R= (y_i+(R,0))_{i\in\llbracket1,n\rrbracket}$. Slightly abusing the notation, we also denote by $x$ and $y_R$ the set of these points, respectively. For a finite subset $Z$ of $\R^2$, we introduce the notation $\S_{(Z)_\delta} = \prod_{z\in Z} \S_{z_\delta}$. Hence, we write $\S_{(x)_\delta} = \S_{(x_1)_\delta}\cdots \S_{(x_m)_\delta}$ and similarly for $\S_{(y_R)_\delta}$ and $\S_{(x\cup y_R)_\delta}$. We want to show
\begin{align}
    \lim_{R\to\infty}\lim_{\delta\to0}\frac{f_{m+n}^{-m-n}(\delta)\E_\delta\Ll[ \S_{(x\cup y_R)_\delta} \Rr]}{f_{m}^{-m}(\delta)f_{n}^{-n}(\delta)\E_\delta\Ll[ \S_{(x)_\delta} \Rr]\E_\delta\Ll[ \S_{(y_R)_\delta} \Rr]} = 1, \label{e.lim_Rlim_deltaf<S>/ff<S><S>=1}
    \\
    \lim_{R\to\infty}\limsup_{\delta\to0}\Ll|\frac{\E_\delta\Ll[ \S_{(x\cup y_R)_\delta} \Rr]}{\E_\delta\Ll[ \S_{(x)_\delta} \Rr]\E_\delta\Ll[ \S_{(y_R)_\delta} \Rr]} - 1\Rr| = 0. \label{e.lim_Rlim_delta<S>/<S><S>=1}
\end{align}
Let us postpone their verifications and use them to deduce the result. Combining the above two displays, we can get
\begin{align*}
    \lim_{\delta\to0}\frac{f_{m+n}^{-m-n}(\delta)}{f_{m}^{-m}(\delta)f_{n}^{-n}(\delta)} = 1,\qquad\forall m,n\in2\N.
\end{align*}
Then, \eqref{e.f_n_ratio} follows by a simple induction argument. This completes the proof, modulo the verification of~\eqref{e.lim_Rlim_deltaf<S>/ff<S><S>=1} and~\eqref{e.lim_Rlim_delta<S>/<S><S>=1}, which we carry out below.
\end{proof}

\begin{proof}[Proof of \eqref{e.lim_Rlim_deltaf<S>/ff<S><S>=1}]
For each fixed $R$, we first apply~\eqref{e.cvg_f_n,d} with some $d_R>0$
such that $x\cup y_R\in X(m+n,d_R)$, and then replace $f_{m+n,d_R}$ by
$f_{m+n}$ using~\eqref{e.limf/f=1}.
We write $\xi^x\in\{-1,1\}^m$,  $\xi^y\in\{-1,1\}^n$, and $\xi=(\xi^x,\xi^y)\in\{-1,1\}^{m+n}$. 
The right-hand side in~\eqref{e.cvg_f_n,d} to $\lim_{\delta\rightarrow 0}f_{m+n}^{-m-n}(\delta)\E_\delta\Ll[ \S_{(x\cup y_R)_\delta}\Rr]$ is the sum over $\xi=(\xi^x,\xi^y)\in\{-1,1\}^{m+n}$ with $\sum_i\xi_i=0$ of
\begin{equation}\label{e.contri_uneq}
    2^{-m-n}e^{c_0(m+n)}\prod_{1\leq i<j\leq m+n}|z_i-z_j|^{\frac{\xi_i\xi_j}{4}}
\end{equation}
where we enumerate $x\cup y_R$ as $z_1,\dots,z_m,z_{m+1},\dots,z_{m+n}$ with $z_i=x_i$ for $i \in\llbracket 1,m\rrbracket$ and $z_{m+j} = y_j + (R,0)$ for $j \in\llbracket 1,n\rrbracket$.
The product in \eqref{e.contri_uneq} can be further decomposed as
\begin{equation*}\prod_{1\leq i<j\leq m+n}|z_i-z_j|^{\frac{\xi_i\xi_j}{4}}=\prod_{\substack{1\leq i\leq m\\ 1\leq j\leq n}}|x_i-y_j-(R,0)|^{\frac{\xi^x_i\xi^y_j}{4}}\bigg(\prod_{1\leq i<j\leq m}|x_i-x_j|^{\frac{\xi^x_i\xi^x_j}{4}}\prod_{1\leq i<j\leq n}|y_i-y_j|^{\frac{\xi^y_i\xi^y_j}{4}}\bigg).
\end{equation*}
It is easy to see that the products inside the parentheses are independent of $R$. Only the product outside the parentheses depends on $R$, and we now analyze this term.

Define $n_{xy}:=\sum_{i=1}^m\xi^x_i$ and we have $-n_{xy}=\sum_{i=1}^n\xi^y_i$ since the sum of entries in $\xi$ is zero.
Due to $\frac{|x_i-y_j-(R,0)|}{R}=1+o(1)$, we get
\begin{equation*}
    \prod_{\substack{1\leq i\leq m\\ 1\leq j\leq n}}|x_i-y_j-(R,0)|^{\frac{\xi^x_i\xi^y_j}{4}} = R^{-\frac{n_{xy}^2}{4}}(1+o(1)).
\end{equation*}
When $n_{xy}\neq 0$, we can see that the term in~\eqref{e.contri_uneq} vanishes as $R\to\infty$.
When $n_{xy}=0$, we have $\sum_{i=1}^m \xi_i^x=0$ and $\sum_{i=1}^n\xi^y_i=0$, and the term in~\eqref{e.contri_uneq} can be written as
\begin{equation*}
    (1+o_R(1))\bigg(2^{-m}e^{c_0m}\prod_{1\leq i<j\leq m}|x_i-x_j|^{\frac{\xi^x_i\xi^x_j}{4}}\bigg)\bigg(2^{-n}e^{c_0n}\prod_{1\leq i<j\leq n}|y_i-y_j|^{\frac{\xi^y_i\xi^y_j}{4}}\bigg),
\end{equation*}
which is exactly the product of the contributions as in~\eqref{e.cvg_f_n,d} of $\xi^x$ and $\xi^y$ to $f_{m}^{-m}(\delta)\E_\delta\Ll[ \S_{(x)_\delta}\Rr]$ and $f_{n}^{-n}(\delta)\E_\delta\Ll[ \S_{(y)_\delta}\Rr]$, respectively.
Therefore, we can deduce
\begin{align*}
\lim_{R\rightarrow \infty}\lim_{\delta\rightarrow 0}f_{m+n}^{-m-n}(\delta)\E_\delta\Ll[ \S_{(x\cup y_R)_\delta}\Rr]
&= \lim_{\delta\rightarrow 0}f_{m}^{-m}(\delta)\E_\delta\Ll[ \S_{(x)_\delta}\Rr] f_{n}^{-n}(\delta)\E_\delta\Ll[ \S_{(y)_\delta}\Rr]
\\
&= \lim_{R\rightarrow \infty}\lim_{\delta\rightarrow 0}f_{m}^{-m}(\delta)\E_\delta\Ll[ \S_{(x)_\delta}\Rr] f_{n}^{-n}(\delta)\E_\delta\Ll[ \S_{(y_R)_\delta}\Rr],
\end{align*}
where the last equality follows from the fact that $y_R$ is a translation of $y$.
Since the above limits are nonzero, we can conclude~\eqref{e.lim_Rlim_deltaf<S>/ff<S><S>=1}.
\end{proof}

\begin{proof}[Proof of~\eqref{e.lim_Rlim_delta<S>/<S><S>=1}]    
Recall from~\eqref{e.<S..S>=sum_P} the expansion of the spin correlation into a summation of connection probabilities. For sets $x$, $y_R$, and $x\cup y_R$, we denote by $\mathfrak{P}_x$, $\mathfrak{P}_{y_R}$, and $\mathfrak{P}_{x\cup y_R}$, respectively, the associated collection $\mathfrak{P}_{+,1}$ of even partitions appearing in~\eqref{e.<S..S>=sum_P}. Each partition $\pa$ in these collections gives rise to the event $E_0(\pa)$ (defined below~\eqref{e.frakP_+=}) that points are connected in open clusters according to $\pa$. Hence, in the current setting, using the shorthand $\phi=\phi_{\delta\Z^2}$, we obtain from~\eqref{e.<S..S>=sum_P} that
\begin{align*}
    \E_\delta\Ll[ \S_{(x)_\delta}\Rr] =\sum_{\pa_1 \in \mathfrak{P}_x}\phi[E_0(\pa_1)],\qquad \E_\delta\Ll[ \S_{(y_R)_\delta}\Rr] =\sum_{\pa_2 \in \mathfrak{P}_{y_R}}\phi[E_0(\pa_2)],
    \\
    \text{and}\qquad \E_\delta\Ll[ \S_{(x\cup y_R)_\delta}\Rr] =\sum_{\pa \in \mathfrak{P}_{x\cup y_R}}\phi[E_0(\pa)].
\end{align*}
Each $(\pa_1,\pa_2)\in\mathfrak{P}_x\times\mathfrak{P}_{y_R}$ uniquely determines a partition in $\mathfrak{P}_{x\cup y_R}$. We write $E_0(\pa_1,\pa_2)$ for the corresponding event. Thus, we have
\begin{align}
    \E_\delta\Ll[ \S_{(x\cup y_R)_\delta}\Rr] - \E_\delta\Ll[ \S_{(x)_\delta}\Rr] \E_\delta\Ll[ \S_{(y_R)_\delta}\Rr] 
    =\sum_{(\pa_1,\pa_2)\in\mathfrak{P}_x\times\mathfrak{P}_{y_R}}\Ll(\phi\Ll[E_0(\pa_1,\pa_2)\Rr] -\phi\Ll[E_0(\pa_1)\Rr]\phi\Ll[ E_0(\pa_2)\Rr] \Rr) \notag
    \\
    + \sum_{\pa \in \mathfrak{P}_{x\cup y_R}\setminus\mathfrak{P}_x\times\mathfrak{P}_{y_R}}\phi\Ll[E_0(\pa)\Rr]. \label{e.<S>-<S><S>=}
\end{align}
In the following, we estimate the sums on the right.
Set $d= \frac{1}{8} \min\{ \min_{i\neq i'}|x_i-x_{i'}|,\ \min_{j\neq j'}|y_j-y_{j'}|\}$ and set $D=8\max_{i,j}\{|x_i|,|y_j|\}$.
We use $\lesssim$ to omit a constant independent of $\delta, R$.

Let $\pa \in \mathfrak{P}_{x\cup y_R}\setminus\mathfrak{P}_x\times\mathfrak{P}_{y_R}$. 
On $E_0(\pa)$, there is a one-arm event from $0$ to $d$ (in Euclidean distance) centered at every point in $x\cup y_R$, and there is a one-arm crossing $[-R/2,R/2]^2\setminus [-D,D]^2$ for sufficiently large $R$ (since there has to be a primal connection between some $x_i$ and $(y_R)_j$ due to $\pa \notin\mathfrak{P}_x\times\mathfrak{P}_{y_R}$). 
In addition, these events are well-separated. Therefore, we can use the mixing property (see~\cite[Proposition~2.9]{duminil2022planar}) to get
\begin{align*}
    \phi[E_0(\pa)] \lesssim \pi_1(d/\delta)^{m+n}\pi_1(D/\delta, R/2\delta) \lesssim \pi_1(d/\delta)^{m+n} R^{-c_1}, \qquad\forall \pa \in \mathfrak{P}_{x\cup y_R}\setminus\mathfrak{P}_x\times\mathfrak{P}_{y_R}
\end{align*}
for some absolute constant $c_1>0$.
Similarly, since there are one-arm events from $0$ to $d$ at each point, by the standard RSW-type argument, we have
\begin{align}\label{e.phi[E_0(P_1)]>}
    \pi_1(d/\delta)^m\lesssim\phi[E_0(\pa_1)] \quad\text{and}\quad \pi_1(d/\delta)^n\lesssim\phi[E_0(\pa_2)],\qquad\forall (\pa_1,\pa_2) \in\mathfrak{P}_x\times\mathfrak{P}_{y_R}.
\end{align}
Therefore, we obtain
\begin{align}\label{e.2nd_sum_in<S>-<S><S>}
     \sum_{\pa \in \mathfrak{P}_{x\cup y_R}\setminus\mathfrak{P}_x\times\mathfrak{P}_{y_R}}\phi\Ll[E_0(\pa)\Rr] \lesssim \pi_1(d/\delta)^{m+n} R^{-c_1} \lesssim R^{-c_1} \E_\delta\Ll[ \S_{(x)_\delta}\Rr] \E_\delta\Ll[ \S_{(y_R)_\delta}\Rr].
\end{align}

Next, to estimate the first sum on the right of~\eqref{e.<S>-<S><S>=}, let $(\pa_1,\pa_2)\in\mathfrak{P}_x\times\mathfrak{P}_{y_R}$. Set $Q_1=[-\sqrt{R},\sqrt{R}]^2$ and $Q_2=(R,0)+[-\sqrt{R},\sqrt{R}]^2$. Let $G_1$ (resp.\ $G_2$) be the event that every open cluster containing one of the points $(x_i)_\delta$ (resp.\ $(y_j+(R,0))_\delta$) is contained in $Q_1$ (resp.\ $Q_2$). Then, $E_0(\pa_i)\cap G_i$ is determined by the configuration in $Q_i$, and
\begin{align*}
    E_0(\pa_1,\pa_2)\cap G_1\cap G_2
    =
    E_0(\pa_1)\cap E_0(\pa_2)\cap G_1\cap G_2.
\end{align*}
Thus, we can bound each summand in the first sum in~\eqref{e.<S>-<S><S>=} as
\begin{align}
    &\Ll|\phi\Ll[E_0(\pa_1,\pa_2)\Rr] -\phi\Ll[E_0(\pa_1)\Rr]\phi\Ll[ E_0(\pa_2)\Rr]\Rr| \notag
    \\
    &\leq \Ll|\phi\Ll[E_0(\pa_1)\cap E_0(\pa_2)\cap G_1\cap G_2\Rr] -\phi\Ll[E_0(\pa_1)\cap G_1\Rr]\phi\Ll[ E_0(\pa_2)\cap G_2\Rr]\Rr| \label{e.|phi-phiphi|_line_1}
    \\
    &+ \phi\Ll[E_0(\pa_1,\pa_2)\cap G_1^\comple\Rr] + \phi\Ll[E_0(\pa_1,\pa_2)\cap G_2^\comple\Rr] \label{e.|phi-phiphi|_line_2}
    \\
    &+ \phi\Ll[E_0(\pa_1)\cap G_1^\comple\Rr]\phi\Ll[ E_0(\pa_2)\Rr] + \phi\Ll[E_0(\pa_1)\Rr]\phi\Ll[ E_0(\pa_2)\cap G_2^\comple\Rr]. \label{e.|phi-phiphi|_line_3}
\end{align}
By the mixing property (see~\cite[Proposition~2.9]{duminil2022planar}), there is an absolute constant $c_2>0$ such that
\begin{align*}
    \text{the term in~\eqref{e.|phi-phiphi|_line_1}}
    \leq
    R^{-c_2}\phi[E_0(\pa_1)\cap G_1]\phi[E_0(\pa_2)\cap G_2]
    \leq
    R^{-c_2}\phi[E_0(\pa_1)]\phi[E_0(\pa_2)].
\end{align*}
On $E_0(\pa_1,\pa_2)\cap G_1^\comple$ (resp.\ $E_0(\pa_1,\pa_2)\cap G_2^\comple$), apart from one-arm events centered at each point giving rise to $\pi_1(d/\delta)$, there is also a one-arm crossing $[-\sqrt{R},\sqrt{R}]^2\setminus[-D,D]^2$ (resp.\ the set shifted by $(R,0)$). Hence, the mixing property gives
\begin{align*}
    \text{each term in~\eqref{e.|phi-phiphi|_line_2}}
    \lesssim
    \pi_1(d/\delta)^{m+n}\pi_1\Ll(D/\delta,\sqrt{R}/\delta\Rr)
    \stackrel{\eqref{e.phi[E_0(P_1)]>}}{\lesssim}
    R^{-c_3}\phi[E_0(\pa_1)]\phi[E_0(\pa_2)]
\end{align*}
for some constant $c_3>0$. The two terms in~\eqref{e.|phi-phiphi|_line_3} are treated similarly.
Combining these, we obtain that
\begin{align*}
    \Ll|\text{the first sum in~\eqref{e.<S>-<S><S>=}}\Rr|
    \lesssim R^{-c_4}\sum_{(\pa_1,\pa_2)\in\mathfrak{P}_x\times\mathfrak{P}_{y_R}}\phi[E_0(\pa_1)]\phi[E_0(\pa_2)] = R^{-c_4}\E_\delta\Ll[ \S_{(x)_\delta}\Rr] \E_\delta\Ll[ \S_{(y_R)_\delta}\Rr] 
\end{align*}
for some constant $c_4>0$. Inserting this and~\eqref{e.2nd_sum_in<S>-<S><S>} into~\eqref{e.<S>-<S><S>=} gives
\begin{align*}
    \Ll|\E_\delta\Ll[ \S_{(x\cup y_R)_\delta}\Rr] - \E_\delta\Ll[ \S_{(x)_\delta}\Rr] \E_\delta\Ll[ \S_{(y_R)_\delta}\Rr]\Rr|\lesssim R^{-c_5}\E_\delta\Ll[ \S_{(x)_\delta}\Rr] \E_\delta\Ll[ \S_{(y_R)_\delta}\Rr] 
\end{align*}
for some constant $c_5>0$. Taking limits, we can deduce~\eqref{e.lim_Rlim_delta<S>/<S><S>=1}. 
\end{proof}

\section{Proof of Theorem~\ref{t.Phi}}

To identify the limit of the magnetization field \(\Phi_\delta\) defined in~\eqref{e.Phi_delta=}, we compute the limits of its moments and show that these moments uniquely determine the limit. We begin with some estimates. Recall $\varrho(\delta)$ from~\eqref{e.varrho=}.

\subsection{Second moments of the magnetization}

We introduce the mean magnetization
\begin{align}\label{e.m_delta=}
    m_\delta := \int \Phi_\delta \one_{[0,1]^2} \stackrel{\eqref{e.Phi_delta=}}{=} \delta^2\varrho(\delta)^{-\frac{1}{2}}\sum_{x\in\delta\Z^2\cap[0,1]^2}\S_x.
\end{align}
Since the correlation of $\S$ among an odd number of points is zero and $\S_x^2 =1$, we have
\begin{align}\label{e.<m^n>=0_odd}
    \E_\delta\Ll[ m^n_\delta\Rr] =0,\qquad\forall n\notin 2\N.
\end{align}

\begin{lemma}\label{l.<m^2><}
We have $\limsup_{\delta\to0} \E_\delta\Ll[ m^2_\delta\Rr]<\infty$.
\end{lemma}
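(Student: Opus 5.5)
We expand the second moment directly:
\[
\E_\delta[m_\delta^2]=\delta^4\varrho(\delta)^{-1}\sum_{x,y\in\delta\Z^2\cap[0,1]^2}\E_\delta[\S_x\S_y].
\]
By the discussion above~\eqref{e.varrho=}, $\E_\delta[\S_x\S_y]=\phi_{\delta\Z^2}[x\longleftrightarrow y]$. The plan is to bound this two-point connectivity uniformly by a power of the distance times $\varrho(\delta)$. We then sum over pairs and observe that the resulting Riemann sum is dominated by the locally integrable kernel $|x-y|^{-1/4-\eta}$ for a small $\eta>0$.

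\textbf{Step 1 (pointwise bound).} For $x\neq y$ at distance $r=|x-y|\ge\delta$, the connection event implies a one-arm event from $x$ to distance $r/2$ and one from $y$ to distance $r/2$. It also implies a macroscopic one-arm from scale $r$ to scale $1$ when we compare with the reference points $(0,0)$ and $(1,0)$. More precisely, we would use quasi-multiplicativity of one-arm probabilities together with the mixing property of~\cite[Proposition~2.9]{duminil2022planar} and RSW, to get
\[
\phi_{\delta\Z^2}[x\longleftrightarrow y]\lesssim \pi_1(r/\delta)^2,\qquad \varrho(\delta)\asymp \pi_1(1/\delta)^2.
\]
The lower bound on $\varrho$ follows from RSW gluing of two one-arms, as in~\eqref{e.phi[E_0(P_1)]>}. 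Quasi-multiplicativity gives $\pi_1(1/\delta)\asymp\pi_1(r/\delta)\,\pi_1(r,1)$, so
\[
\frac{\phi[x\leftrightarrow y]}{\varrho(\delta)}\lesssim \pi_1(r,1)^{-2}.
\]
We then need a uniform polynomial lower bound $\pi_1(r,1)\gtrsim r^{1/8+\eta}$ for $r\in[\delta,1]$, uniformly in $\delta$. Uniform polynomial bounds of this kind follow from RSW (scale invariance at criticality). The one-arm exponent $1/8$ from~\cite[Theorem~1.2]{qequal4} suggests that any $\eta>0$ works, but for integrability all we need is some exponent strictly less than $1$ for $\pi_1^{-2}$. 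So any a priori bound $\pi_1(r,1)\gtrsim r^{\alpha}$ with $2\alpha<2$ suffices.

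\textbf{Step 2 (summation).} Combining Step 1 with the bound, we estimate
\[
\E_\delta[m_\delta^2]\lesssim \delta^4\varrho(\delta)^{-1}\cdot\#(\delta\Z^2\cap[0,1]^2)+\delta^4\sum_{x\neq y}|x-y|^{-2\alpha}.
\]
The diagonal term is $\delta^{2}\varrho(\delta)^{-1}=\delta^{2-1/4+o(1)}\to0$ by~\eqref{e.varrho=}. The off-diagonal term is a Riemann sum bounded by $C\iint_{[0,1]^4}|x-y|^{-2\alpha}\,\d x\,\d y<\infty$, since $2\alpha<2$.

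\textbf{Main obstacle.} The delicate point is Step 1: we need a bound that is \emph{uniform} in $\delta$ and in $r\in[\delta,1]$. Plain exponent asymptotics $\pi_1(R)=R^{-1/8+o(1)}$ are not enough, because the $o(1)$ depends on the scale. I would handle this with quasi-multiplicativity plus the polynomial RSW lower bound $\pi_1(r,R)\gtrsim (r/R)^{\alpha}$, with $\alpha<1$ available from the $q=4$ RSW theory of~\cite{DumSidTas13}. Indeed, the two-arm (dual) exponent bound $\pi_1\ge \pi_2$-type comparisons give $\alpha\le 1/2$, which makes the kernel integrable.
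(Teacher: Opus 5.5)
Your argument is essentially the paper's: the two-point function is $\asymp \pi_1(\delta^{-1}|x-y|)^2$ by RSW, $\varrho(\delta)\asymp \pi_1(\delta^{-1})^2$, and quasi-multiplicativity together with a bound $\pi_1(r,R)\gtrsim (r/R)^{\alpha}$ for some $\alpha<1$ controls the ratio; you then compare with the integrable kernel $|x-y|^{-2\alpha}$ where the paper sums over dyadic shells, which is only a cosmetic difference. You should drop the side claim that $\pi_1\ge \pi_2$ comparisons give $\alpha\le 1/2$, since that needs the two-arm exponent rather than RSW alone; only $\alpha<1$ is required, and that is exactly the input $c\in(0,1)$ the paper also takes from quasi-multiplicativity.
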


\begin{proof}
Due to~\eqref{e.<S..S>=sum_P}, $\E_\delta\Ll[ \S_x\S_y\Rr]$ corresponds to the probability under $\phi_{\delta\Z^2}$ that there is a primal connection between $x$ and $y$. Hence, by a standard RSW-type argument, we have
\begin{align}\label{e.<SS>=pi_1^2}
    \E_\delta\Ll[ \S_x\S_y\Rr]\asymp \pi_1\Ll(\delta^{-1}|x-y|_\infty\Rr)^2
\end{align}
where $|z|_\infty= \max \{|z_1|,\,|z_2|\}$. In particular, we have
\begin{align}\label{e.f=pi_1}
    \varrho(\delta)^\frac{1}{2} \stackrel{\eqref{e.varrho=}}{\asymp} \E_\delta\Ll[  \S_{(0,0)_\delta} \S_{(1,0)_\delta}\Rr]^\frac{1}{2} \stackrel{\eqref{e.<SS>=pi_1^2}}{\asymp} \pi_1(\delta^{-1}).
\end{align}
By the quasi-multiplicativity of $\pi_1$, there are constants $C>0$ and $c\in(0,1)$ such that
\begin{align}\label{e.pi_1/pi_2<}
    \pi_1(2^i)/\pi_1(\delta^{-1}) \leq C\Ll(2^i\delta\Rr)^{-c}\qquad\text{for $2^i\leq \delta^{-1}$.}
\end{align}
We can expand
\begin{align*}
    \delta^{-4} \varrho(\delta)\E_\delta\Ll[ m_\delta^2\Rr] \stackrel{\eqref{e.m_delta=}}{=} \sum_{x,y\in\delta\Z^2\cap[0,1]^2}\E_\delta\Ll[ \S_x\S_y\Rr] \stackrel{\eqref{e.<SS>=pi_1^2}}{\asymp}  \sum_{x,y\in\delta\Z^2\cap[0,1]^2}\pi_1\Ll(\delta^{-1}|x-y|_\infty\Rr)^2
    \\
    \lesssim \sum_{y\in\delta\Z^2\cap[0,1]^2}\sum_{i=0}^{\log_2(1/\delta)}\sum_{\delta 2^i \leq \|x\|_\infty \leq \delta 2^{i+1}} \pi_1(2^i)^2  \lesssim \delta^{-2}\sum_{i=0}^{\log_2(1/\delta)}2^{2i} \pi_1(2^i)^2
    \\
    \stackrel{\eqref{e.pi_1/pi_2<}}{\lesssim} \delta^{-2-2c}\pi_1(\delta^{-1})^2\sum_{i=1}^{\log_2(1/\delta)} 2^{(2-2c)i} \lesssim \delta^{-4} \pi_1(\delta^{-1})^2 \stackrel{\eqref{e.f=pi_1}}{\asymp} \delta^{-4}\varrho(\delta)
\end{align*}
which gives the desired result.
\end{proof}

We record a useful bound. For any bounded continuous $h$ on $[0,1]^2$, we have
\begin{align}
     \Ll|\E_\delta\Ll[ \Ll(\int \Phi_\delta h \Rr)^n\Rr]\Rr| &\stackrel{\eqref{e.Phi_delta=}}{=} \delta^{2n} \varrho(\delta)^{-\frac{n}{2}} \E_\delta\Ll[ \sum_{x_1,\dots,x_n\in\delta\Z^2\cap[0,1]^2} h(x_1)\cdots h(x_n)\S_{x_1}\cdots \S_{x_n} \Rr] \notag
     \\
     &\leq  \delta^{2n} \varrho(\delta)^{-\frac{n}{2}}\|h\|_\infty^n\sum_{x_1,\dots,x_n\in\delta\Z^2\cap[0,1]^2}\E_\delta\Ll[\S_{x_1}\cdots \S_{x_n}\Rr] \notag
    \\
    &\stackrel{\eqref{e.m_delta=}}{=}  \|h\|_\infty^n \E_\delta\Ll[ m_\delta^n\Rr]. \label{e.intPhih<hm}
\end{align}

\subsection{Sobolev spaces and tightness}\label{s.sobolev}

Recall \(\N=\{1,2,\ldots\}\). On \([0,1]^2\), we use the Dirichlet sine basis
\(e_{jk}(z)=2\sin(j\pi z_1)\sin(k\pi z_2)\), \(j,k\in\N\). This is an
orthonormal basis of \(L^2([0,1]^2)\). For \(h\in L^2([0,1]^2)\), write
\(\hat h_{jk}=\int_{[0,1]^2}h e_{jk}\). For \(s\ge0\), let
\(\mcl H^s_0([0,1]^2)\) and \(\mcl H^{-s}([0,1]^2)\) be completions of
\(L^2([0,1]^2)\) under the norms
\[
    \|h\|^2_{\mcl H^s_0([0,1]^2)}
    =
    \sum_{j,k\in\N} \Ll(\hat h_{jk}\Rr)^2\Ll(j^2+k^2\Rr)^s\qquad\text{and}\qquad
    \|h\|^2_{\mcl H^{-s}([0,1]^2)}
    =
    \sum_{j,k\in\N} \Ll(\hat h_{jk}\Rr)^2\Ll(j^2+k^2\Rr)^{-s},
\]
respectively.
With the pairing
\(\langle f,h\rangle=\sum_{j,k\in\N}\hat f_{jk}\hat h_{jk}\),
\(\mcl H^{-s}([0,1]^2)\) is naturally identified with the dual of
\(\mcl H^s_0([0,1]^2)\).

For \(L>0\), the spaces \(\mcl H^s_0([-L,L]^2)\) and
\(\mcl H^{-s}([-L,L]^2)\) are defined by translating and dilating the
above construction.

On \(\R^2\), we use the corresponding local spaces. Namely,
\(\mcl H^s_0(\R^2)\) denotes the space of compactly supported functions
\(h\) such that, for some \(L\in\N\) with
\(\operatorname{supp} h\subset (-L,L)^2\), the restriction
\(h|_{[-L,L]^2}\) belongs to \(\mcl H^s_0([-L,L]^2)\). We use the natural
compact-support topology: \(h_n\to h\) if all supports are eventually
contained in a common box \((-L,L)^2\) and
\(h_n\to h\) in \(\mcl H^s_0([-L,L]^2)\).

The dual of \(\mcl H^s_0(\R^2)\) is the local negative Sobolev space consisting of every distribution $u$ on $\R^2$ satisfying $u|_{[-L,L]^2}\in \mcl H^{-s}([-L,L]^2)$ for every $L\in\N$,
equipped with the topology generated by the semi-norms
\( \|u|_{[-L,L]^2}\|_{\mcl H^{-s}([-L,L]^2)}\) for \(L\in\N\). 

\begin{lemma}\label{l.tightness}
For every \(L>0\) and \(s>1\), we have $\limsup_{\delta\to0}\E_\delta\Ll[ \|\Phi_\delta\|^2_{\mcl H^{-s}([-L,L]^2)} \Rr] <\infty$.
Consequently, \((\Phi_\delta)_{\delta>0}\) is tight in
\(\mcl H^{-s}(\R^2)\) for every \(s>1\).
\end{lemma}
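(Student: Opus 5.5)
We expand the negative Sobolev norm in the sine basis and reduce everything to two-point functions, as in the proof of Lemma~\ref{l.<m^2><}. By translation and dilation it suffices to treat the box $[0,1]^2$, since $[-L,L]^2$ is a finite union of unit boxes, or equivalently a rescaled unit box. The Sobolev norm is then
\[
\E_\delta\Ll[\|\Phi_\delta\|^2_{\mcl H^{-s}([0,1]^2)}\Rr]=\sum_{j,k\in\N}(j^2+k^2)^{-s}\,\E_\delta\Ll[\Ll(\int\Phi_\delta e_{jk}\Rr)^2\Rr].
\]
Here $\int \Phi_\delta e_{jk}$ is understood with $e_{jk}$ restricted to $[0,1]^2$, as in~\eqref{e.intPhih<hm}.

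The key step is a bound on each Fourier coefficient that is uniform in $(j,k)$ and $\delta$. We apply~\eqref{e.intPhih<hm} with $n=2$ and $h=e_{jk}$, which uses $\|e_{jk}\|_\infty\le 2$ and the fact that $\E_\delta[\S_x\S_y]\ge0$ (a connection probability, by~\eqref{e.<S..S>=sum_P}). This gives
\[
\E_\delta\Ll[\Ll(\int\Phi_\delta e_{jk}\Rr)^2\Rr]\le 4\,\E_\delta\Ll[m_\delta^2\Rr].
\]
Lemma~\ref{l.<m^2><} then bounds the right-hand side by a constant $C$ for all small $\delta$, uniformly in $(j,k)$.

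Summing over $(j,k)$ gives
\[
\E_\delta\Ll[\|\Phi_\delta\|^2_{\mcl H^{-s}}\Rr]\le 4C\sum_{j,k}(j^2+k^2)^{-s}.
\]
The sum is finite precisely when $s>1$, since the lattice sum behaves like $\int_{\R^2}(1+|z|^2)^{-s}\,dz$. This yields the $\limsup$ bound. For a general box $[-L,L]^2$, the rescaled field on $[0,1]^2$ differs from $\Phi_\delta$ only by a change of lattice spacing $\delta/(2L)$ and of the normalization $\varrho(\delta)$. The ratio of these normalizations stays bounded by~\eqref{e.f=pi_1} and quasi-multiplicativity of $\pi_1$. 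Alternatively, one can directly rerun the proof of Lemma~\ref{l.<m^2><} on $[-L,L]^2$, which only changes constants.

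Tightness follows from a Rellich-type compact embedding: $\mcl H^{-s'}([-L,L]^2)\hookrightarrow\mcl H^{-s}([-L,L]^2)$ is compact for $1<s'<s$, because the weights $(j^2+k^2)^{-(s-s')}\to0$. We apply the uniform second-moment bound with exponent $s'$ (still $>1$) and use Markov's inequality. This shows that balls in $\mcl H^{-s'}$, which are compact in $\mcl H^{-s}$, carry mass $1-\eta$ uniformly in $\delta$ for each $L$. A diagonal argument over $L\in\N$ then handles the Fréchet topology on $\mcl H^{-s}(\R^2)$, choosing radii $r_L$ with failure probabilities $\eta 2^{-L}$.

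The main subtlety is the boundary treatment. The restriction of $\Phi_\delta$ to the closed box, paired with sine functions that vanish on the boundary, must be identified with the distributional restriction used in the local space. I expect this to be only a definitional matter, since point masses on the boundary pair to zero.
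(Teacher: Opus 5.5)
Your proposal is correct and follows the paper's proof essentially step for step. You expand the $\mathcal H^{-s}([0,1]^2)$ norm in the sine basis, bound each coefficient by $4\E_\delta[m_\delta^2]$ via \eqref{e.intPhih<hm} and Lemma~\ref{l.<m^2><}, use $s>1$ to sum, and get tightness from the compact embedding $\mathcal H^{-s'}\hookrightarrow\mathcal H^{-s}$ for $1<s'<s$. Your extra details (nonnegativity of $\E_\delta[\S_x\S_y]$, the normalization comparison on $[-L,L]^2$, and the Markov/diagonal argument over $L$) just make explicit what the paper leaves implicit.
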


\begin{proof}
It suffices to prove the estimate on \([0,1]^2\). The proof on
\([-L,L]^2\) is the same after translation and dilation. By the definition
of the \(\mcl H^{-s}([0,1]^2)\)-norm and Tonelli's theorem,
\begin{align*}
    \E_\delta\Ll[ \|\Phi_\delta\|^2_{\mcl H^{-s}([0,1]^2)} \Rr]
    =
    \sum_{j,k\in\N}
    \E_\delta\Ll[
    \Ll( \int_{[0,1]^2} \Phi_\delta e_{jk} \Rr)^2
    \Rr]
    \Ll(j^2+k^2\Rr)^{-s}                                      \stackrel{\eqref{e.intPhih<hm}}{\le}
    4\E_\delta\Ll[ m_\delta^2\Rr]
    \sum_{j,k\in\N}\Ll(j^2+k^2\Rr)^{-s}.
\end{align*}
The last sum is finite for \(s>1\), and
Lemma~\ref{l.<m^2><} gives a uniform bound on \(\E_\delta\Ll[ m_\delta^2\Rr]\).
This proves the finite-volume estimate.

For tightness, fix \(s>1\) and choose \(s'\in(1,s)\). The estimate above
gives a uniform bound in \(\mcl H^{-s'}([-L,L]^2)\) for every \(L\). Since
the embedding $\mcl H^{-s'}([-L,L]^2)\hookrightarrow\mcl H^{-s}([-L,L]^2)$ is compact, the restrictions \((\Phi_\delta|_{[-L,L]^2})_{\delta>0}\) are
tight in \(\mcl H^{-s}([-L,L]^2)\) for every fixed \(L\). Since the
topology of \(\mcl H^{-s}(\R^2)\) is generated by these finite-volume
semi-norms, tightness of all restrictions implies tightness in
\(\mcl H^{-s}(\R^2)\).
\end{proof}

\subsection{Identifying the limit via moments}

We need the following two ingredients.
Recall $M_n(h)$ from~\eqref{e.M_n(h)=}.

\begin{lemma}\label{l.mom_Phi_delta}
For every $s>1$ and every $h\in \mcl H_0^s(\R^2)$, we have $\lim_{\delta\to0} \E_\delta\Ll[\Ll(\int \Phi_\delta h\Rr)^n\Rr] = M_n(h)$ for every $n\in\N$.
\end{lemma}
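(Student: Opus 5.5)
For odd $n$ both sides vanish: the left side by the odd-correlation argument behind~\eqref{e.<m^n>=0_odd}, applied to the weighted sum, and $M_n(h)=0$ by definition. So fix $n\in2\N$ and $h\in\mcl H^s_0(\R^2)$ with $s>1$; such $h$ is bounded, continuous and supported in some $(-L,L)^2$. Expanding the $n$-th power gives
\[
\E_\delta\Ll[\Ll(\int\Phi_\delta h\Rr)^n\Rr]=\delta^{2n}\varrho(\delta)^{-\frac n2}\sum_{z_1,\dots,z_n\in\delta\Z^2}h(z_1)\cdots h(z_n)\,\E_\delta\Ll[\S_{z_1}\cdots\S_{z_n}\Rr].
\]
I would read this as a Riemann-sum approximation of $\int h^{\otimes n}\la x_1,\dots,x_n\ra$, with $\varrho(\delta)^{-n/2}\E_\delta[\cdots]$ converging to $\la x\ra$ by Theorem~\ref{t}. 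Since that convergence is only locally uniform away from the diagonals, I fix $\eta>0$ and split $(\R^2)^n$ (or rather $(\delta\Z^2\cap[-L,L]^2)^n$) into the good set $G_\eta=\{\min_{i\ne j}|x_i-x_j|\ge\eta\}\cap[-L,L]^{2n}$ and its complement $B_\eta$.

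\emph{Good region.} On $G_\eta$, which is compact and avoids the diagonals, Theorem~\ref{t} gives $\varrho(\delta)^{-n/2}\E_\delta[\S_{(x_1)_\delta}\cdots\S_{(x_n)_\delta}]\to\la x\ra$ uniformly. Because $h$ is continuous and $\la x\ra$ is continuous and bounded on $G_\eta$, the lattice sum over $G_\eta$ converges to $\int_{G_\eta}h^{\otimes n}\la x\ra\,dx$. Lattice points near $\partial G_\eta$ contribute $o(1)$, since the integrand is bounded there. Letting $\eta\to0$ recovers $M_n(h)$ by dominated convergence, using the local integrability of $\la x\ra$ stated after~\eqref{e.M_n(h)=}.

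\emph{Bad region, which I expect to be the main obstacle.} I must show
\[
\lim_{\eta\to0}\limsup_{\delta\to0}\delta^{2n}\varrho(\delta)^{-\frac n2}\sum_{z\in B_\eta}\|h\|_\infty^n\,\E_\delta\Ll[\S_{z_1}\cdots\S_{z_n}\Rr]=0 .
\]
Using~\eqref{e.<S..S>=sum_P}, the correlation is a sum over even partitions $\pa\in\mathfrak P_{+,1}$ of $\phi[E_0(\pa)]$. I would bound each term by a tree-like product of one-arm probabilities. Order the points by a hierarchical (nearest-neighbour, Kruskal-type) clustering. By the mixing property and quasi-multiplicativity, $\phi[E_0(\pa)]$ is at most a constant $C_n$ times a product over the merge scales $r_k$ of factors like $\pi_1(r_k/\delta)$, which by~\eqref{e.f=pi_1} and~\eqref{e.pi_1/pi_2<} is comparable to $\varrho(\delta)^{n/2}\prod_i r_i^{-1/8+o(1)}$, with $r_i$ the distance from $z_i$ to its nearest neighbour. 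This is essentially the argument of Lemma~\ref{l.<m^2><} generalised to $n$ points, as in~\cite{camia2015planar}. Summing dyadic shells then gives
\[
\delta^{2n}\varrho(\delta)^{-\frac n2}\sum_{z\in B_\eta}(\cdots)\lesssim\int_{B_{2\eta}}\prod_i r_i(x)^{-\frac14}\,dx .
\]
The exponents are strictly less than $2$ per point, because $c<1$ in~\eqref{e.pi_1/pi_2<}, so the integrand is integrable and the right side tends to $0$ as $\eta\to0$. The care needed is in making this clustering bound uniform, and in handling points at distance $O(\delta)$, where the lattice sum has to replace the integral. The tool there is the slack $(2^i\delta)^{-c}$ with $c<1$, exactly as in the proof of Lemma~\ref{l.<m^2><}.

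Combining the two regions and letting $\delta\to0$ and then $\eta\to0$ gives the claimed limit $M_n(h)$.
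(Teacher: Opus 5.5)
Your proposal is correct and follows essentially the same route as the paper. You split at $\min_{i\neq j}|x_i-x_j|\ge\eta$, treat the separated region as a Riemann sum using the locally uniform convergence of Theorem~\ref{t} and the continuity of $h$, and control the near-diagonal region by one-arm bounds in the spirit of \cite{camia2015planar}, sending $\delta\to0$ and then $\eta\to0$. The paper only cites \cite{camia2015planar} for that bound, whereas you sketch the nearest-neighbour product bound yourself.

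One thing to tidy in the bad region. The nearest-neighbour product is an upper bound, not a comparability. Also, integrability comes from $2c<2$ for mutual nearest-neighbour pairs, which share the same distance and so produce $r^{-2c}$. A per-point exponent below $2$ would not be enough.
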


\begin{proof}
Without loss of generality, we can assume that $h$ is supported on $[0,1]^2$. The case $n\notin2\N$ follows from~\eqref{e.<m^n>=0_odd} and~\eqref{e.intPhih<hm}. Now, let $n\in 2\N$, fix any $\eps>0$, and expand
\begin{align}
    \E_\delta\Ll[\Ll(\int \Phi_\delta h\Rr)^n\Rr] &\stackrel{\eqref{e.Phi_delta=}}{=}\sum_{x_1,\dots, x_n \in \delta\Z^2\cap [0,1]^2}(\delta^2\varrho(\delta)^{-\frac{1}{2}})^n h(x_1)\cdots h(x_n)\E_\delta\Ll[ \S_{x_1}\cdots\S_{x_n}\Rr] \notag
    \\
    &=\sum_{\substack{x_1,\dots, x_n \in \delta\Z^2\cap [0,1]^2 \\ \inf_{i\neq j}|x_i-x_j|\geq\eps}}\delta^{2n} h(x_1)\cdots h(x_n)\varrho(\delta)^{-\frac{n}{2}}\E_\delta\Ll[ \S_{x_1}\cdots\S_{x_n}\Rr] \label{e.<Phih>^n=_I}
    \\
    &+ \sum_{\substack{x_1,\dots, x_n \in \delta\Z^2\cap [0,1]^2 \\ \inf_{i\neq j}|x_i-x_j|<\eps}} (\delta^2\varrho(\delta)^{-\frac{1}{2}})^n h(x_1)\cdots h(x_n)\E_\delta\Ll[ \S_{x_1}\cdots\S_{x_n}\Rr] .\label{e.<Phih>^n=_II}
\end{align}
Theorem~\ref{t} implies that
\begin{align*}
    \lim_{\delta\to0} \varrho(\delta)^{-\frac{n}{2}}\E_\delta\Ll[ \S_{x_1}\cdots\S_{x_n}\Rr]  = \la x_1,\dots, x_n\ra 
\end{align*}
uniformly in $x_1,\dots,x_n\in\delta\Z^2\cap[0,1]^2$ with $\inf_{i\neq j}|x_i-x_j|\geq \eps$.
Since $h \in \mcl H^s_0(\R^2)$ and $s>1$, $h$ is uniformly continuous. Using these, we can see that, for each $\eps>0$, the sum in~\eqref{e.<Phih>^n=_I} converges as $\delta\to0$ to
\begin{align*}
    \int_{\substack{x_1,\dots, x_n \in  [0,1]^2 \\ \inf_{i\neq j}|x_i-x_j|\geq\eps}} h(x_1)\cdots h(x_n)\la x_1,\dots,x_n\ra \d x_1\cdots \d x_n.
\end{align*}
By adapting the arguments in~\cite[Section~3.2]{camia2015planar} together with the modification in~\cite[Section~4.2]{camia2015planar}\footnote{The proof in \cite[Section~3.2]{camia2015planar} is based on the random cluster representation and induction. One key ingredient for adaptation is that $\varrho(\delta)^\frac{1}{2}$ is comparable to the one-arm probability as in~\eqref{e.f=pi_1}. Our setting is closer to the description in~\cite[Section~4.2]{camia2015planar}.}, we can show that there are constants $C,c>0$ such that, uniformly in $0<\delta<\eps$, 
\begin{align*}
    \sum_{\substack{x_1,\dots, x_n \in \delta\Z^2\cap [0,1]^2 \\ \inf_{i\neq j}|x_i-x_j|<\eps}} (\delta^2\varrho(\delta)^{-\frac{1}{2}})^n \E_\delta\Ll[ \S_{x_1}\cdots\S_{x_n}\Rr] \leq C\eps^c .
\end{align*}
Since $h$ is bounded, this ensures that the term in~\eqref{e.<Phih>^n=_II} is bounded by $C\|h\|^n_\infty\eps^c$. First taking $\delta\to0$ and then $\eps\to0$, we can deduce the desired result.
\end{proof}

We recall the following standard form of \textit{Carleman's condition} (e.g.\ see~\cite[Theorem 3.3.25 and Remark]{durrett2019probability}). Let
$(m_n)_{n\in\N}$ be the moment sequence of a probability measure on
$\mathbb R$. If $\sum_{k=1}^\infty m_{2k}^{-1/(2k)}=\infty$, then the
probability measure is uniquely determined by its moments. 

\begin{lemma}
\label{l.carleman_Phi}
Let $s>1$ and let $h\in\mathcal H^s_0(\mathbb R^2)$. 
Then, there is a constant $A_h>0$ such that, for every $k\in\N$, we have $|M_{2k}(h)|\leq A_h^{2k}(2k)!$.
Consequently, the sequence $(M_n(h))_{n\in\N}$ satisfies Carleman's condition.
\end{lemma}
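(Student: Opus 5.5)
We bound $|M_{2k}(h)|$ by passing the limit through the discrete moments. By Lemma~\ref{l.mom_Phi_delta}, $M_{2k}(h)=\lim_{\delta\to0}\E_\delta[(\int\Phi_\delta h)^{2k}]$. By~\eqref{e.intPhih<hm} and translation/dilation (say $\supp h\subset(-L,L)^2$, covered by $(2L)^2$ unit squares), it suffices to show $\limsup_{\delta\to0}\E_\delta[m_\delta^{2k}]\le A^{2k}(2k)!$ for some constant $A$ and every $k$. Here we should not apply~\eqref{e.intPhih<hm} on $[-L,L]^2$ directly, but rescale so that the $n$-fold sum over $[-L,L]^2$ is controlled by a multiple of the unit-square quantity. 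An alternative is to bound $M_{2k}(h)$ directly by $\|h\|_\infty^{2k}\int_{([-L,L]^2)^{2k}}\la x_1,\dots,x_{2k}\ra\,dx$, since each term in $\la\cdot\ra$ is positive. We then need a factorial-type bound on the integral of the Coulomb-gas sum.

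We take the direct continuum route, which avoids re-examining the discrete induction. Each summand $\prod_{i<j}|x_i-x_j|^{\xi_i\xi_j/4}$ with $\sum\xi_i=0$ is the correlation $\E[\prod_i e^{i\xi_i X_\eta(x_i)/2}]$ of a regularized GFF, up to self-energy normalization. Equivalently, it is a neutral Coulomb gas at $\beta^2=1/4$. The standard electrostatic (Onsager-type) inequality for neutral charges bounds this energy term. It gives
\[
\prod_{i<j}|x_i-x_j|^{\xi_i\xi_j/4}\le \prod_{i=1}^{2k} \Big(\tfrac12 \min_{j\ne i}|x_i-x_j|\Big)^{-1/8}.
\]
The proof compares with disjoint balls of radius $r_i=\frac12$ times the nearest-neighbour distance and uses positive definiteness of $-\log$ on neutral measures.

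The number of neutral $\xi$ is $\binom{2k}{k}\le 4^k$. It then remains to bound
\[
\int_{([-L,L]^2)^{2k}}\prod_i d_i(x)^{-1/8}\,dx,\qquad d_i(x)=\min_{j\neq i}|x_i-x_j|.
\]
For this we use the nearest-neighbour graph: each $i$ picks $\nu(i)$ with $d_i=|x_i-x_{\nu(i)}|$. We sum over the at most $(2k)^{2k}$ maps $\nu$, but must be careful, because $(2k)^{2k}$ is comparable to $(2k)!\,e^{2k}$, which is acceptable. For a fixed $\nu$ the functional graph decomposes into components, each containing a $2$-cycle. Integrating leaves-first, each point contributes $\int_{[-2L,2L]^2}|y|^{-1/8}dy\le C_L$, and at most $d_i^{-1/4}$ arises at the $2$-cycle, which is still integrable. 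This gives at most $C_L^{2k}$ per $\nu$. Hence $|M_{2k}(h)|\le 4^k(2k)^{2k}C_L^{2k}\|h\|_\infty^{2k}\le A_h^{2k}(2k)!$ by Stirling's formula.

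Carleman's condition then follows. We have $M_{2k}^{1/(2k)}\le A_h((2k)!)^{1/(2k)}\le A_h\cdot 2k$, so $\sum_k M_{2k}^{-1/(2k)}\ge\sum_k (2kA_h)^{-1}=\infty$. This uses only that $M_{2k}(h)\ge0$, which holds as it is a limit of even moments.

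The main obstacle is the electrostatic inequality: showing that, for neutral charges, the product of pairwise factors is dominated by the nearest-neighbour self-energies with uniform constants. If it proves delicate, we would instead run the discrete argument of~\cite[Section~3.2]{camia2015planar} quantitatively. That argument expands $\E_\delta[m_\delta^{2k}]$ via~\eqref{e.<S..S>=sum_P} into one-arm events, and tracks the combinatorial count $(2k)!$ arising from the choice of nested clusters through the induction.
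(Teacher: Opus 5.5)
Your proof is correct, but it takes a different route from the paper.

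\textbf{The paper's route.} The paper treats each neutral term exactly. With $P=\{i:\xi_i=+1\}$ and $N=\{i:\xi_i=-1\}$, the Cauchy double alternant identity turns $\prod_{i<j}|x_i-x_j|^{\xi_i\xi_j}$ into $|\det(1/(x_i-x_j))_{i\in P,\,j\in N}|$. Expanding the determinant and using subadditivity of $u\mapsto u^{1/4}$ bounds the term by a sum, over the $k!$ bijections $\pi:P\to N$, of $\prod_{i\in P}|x_i-x_{\pi(i)}|^{-1/4}$. Each of these integrates against $\prod_\ell|h(x_\ell)|$ to exactly $B_h^k$, so $|M_{2k}(h)|\le\binom{2k}{k}k!\,B_h^k=\frac{(2k)!}{k!}B_h^k$.

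\textbf{Your route.} You use Onsager's inequality followed by a nearest-neighbour functional-graph integration. The step you flag as the main obstacle is actually fine. Let $\sigma_i$ be the uniform probability measure on $\partial B_{r_i}(x_i)$, with $r_i=\frac12\min_{j\neq i}|x_i-x_j|$. The mutual logarithmic energies are then exactly $-\log|x_i-x_j|$, since the disks have disjoint interiors, and the self-energies are $-\log r_i$. Nonnegativity of the energy of the neutral measure $\sum_i\xi_i\sigma_i$ therefore gives your displayed bound with exactly the exponent $1/8$. Using uniform measures on the disks instead only costs a factor $e^{1/32}$ per point. The leaves-first integration is also fine. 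You do not even need the cycles to be $2$-cycles, since $|a|^{-1/8}|b|^{-1/8}\le\frac12(|a|^{-1/4}+|b|^{-1/4})$ opens any cycle.

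\textbf{What each route buys.} The paper's bound $\frac{(2k)!}{k!}B_h^k$ is exactly the $2k$-th moment of a centered Gaussian of variance $2B_h$. It therefore gives Gaussian-type moment growth in a few lines. Yours, $\binom{2k}{k}(2k)^{2k}C^{2k}\le(2k)!\,(2eC)^{2k}$, only gives exponential-type growth, which is still enough for Carleman. On the other hand, the paper's argument relies on the determinantal structure: it needs equal-magnitude charges $\pm1$ and an exponent at most $1$ so that subadditivity applies. Yours works for arbitrary neutral charge configurations, as long as the resulting $2$-cycle singularity is locally integrable.

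Your remark that $M_{2k}(h)\ge0$ (as a limit of even moments via Lemma~\ref{l.mom_Phi_delta}) makes $M_{2k}(h)^{-1/(2k)}$ meaningful; the paper leaves this implicit. The discrete fallback sketched in your first and last paragraphs is not needed.
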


\begin{proof}
By \eqref{e.t}, for $n=2k$, we have
\begin{align}
    \langle x_1,\dots,x_{2k}\rangle
    =
    \sum_{\substack{\xi\in\{-1,+1\}^{2k}\\ \sum_i \xi_i=0}}
    \prod_{1\leq i<j\leq 2k}|x_i-x_j|^{\xi_i\xi_j/4}. \label{e.<>2k}
\end{align}
Fix any $\xi$ satisfying $\sum_i\xi_i=0$ and set $P=\{i:\xi_i=+1\}$ and $N=\{i:\xi_i=-1\}$.
Then, we have $|P|=|N|=k$ and
\[
    \prod_{1\leq i<j\leq 2k}|x_i-x_j|^{\xi_i\xi_j/4}
    =
    \frac{
    \prod_{\substack{i<j\\ i,j\in P}}|x_i-x_j|^{1/4}
    \prod_{\substack{i<j\\ i,j\in N}}|x_i-x_j|^{1/4}
    }{
    \prod_{\substack{i\in P\\ j\in N}}|x_i-x_j|^{1/4}
    }.
\]
By the Cauchy double alternant identity\footnote{See
\cite[(0.9.12.1) in Sec.~0.9.12]{hornjohnson2013matrix} of the form
\[
    \det\left(\frac{1}{a_r+b_s}\right)_{1\leq r,s\leq k}
    =
    \frac{
    \prod_{r<r'}(a_{r'}-a_r)
    \prod_{s<s'}(b_{s'}-b_s)
    }{
    \prod_{r,s}(a_r+b_s)
    }.
\]
Replacing \(b_s\) by \(-b_s\), and then taking absolute values, gives the
identity used here with \(a_r=x_{p_r}\), \(b_s=x_{n_s}\), and $\R^2$ identified with $\mathbb{C}$, where
\(P=\{p_1,\dots,p_k\}\) and \(N=\{n_1,\dots,n_k\}\).}, we have (viewing $x_i$'s as complex numbers on the right-hand side) that
\[
    \frac{
    \prod_{\substack{i<j\\ i,j\in P}}|x_i-x_j|
    \prod_{\substack{i<j\\ i,j\in N}}|x_i-x_j|
    }{
    \prod_{\substack{i\in P\\ j\in N}}|x_i-x_j|
    }
    =
    \left|
    \det\left(\frac{1}{x_i-x_j}\right)_{i\in P,\ j\in N}
    \right|.
\]
Therefore, expanding the determinant and using subadditivity of $u\mapsto u^{1/4}$,
we get
\[
    \prod_{1\leq i<j\leq 2k}|x_i-x_j|^{\xi_i\xi_j/4}
    \leq
    \sum_{\pi:P\to N}
    \prod_{i\in P}|x_i-x_{\pi(i)}|^{-1/4},
\]
where the sum is over all bijections $\pi:P\to N$.
Combining the above three displays, we obtain
\[
\begin{aligned}
&\int_{(\mathbb R^2)^{2k}}
\prod_{\ell=1}^{2k}|h(x_\ell)|
\prod_{1\leq i<j\leq 2k}|x_i-x_j|^{\xi_i\xi_j/4}
\,dx_1\cdots dx_{2k}
\\
&\qquad\leq
\sum_{\pi:P\to N}
\int_{(\mathbb R^2)^{2k}}
\prod_{\ell=1}^{2k}|h(x_\ell)|
\prod_{i\in P}|x_i-x_{\pi(i)}|^{-1/4}
\,dx_1\cdots dx_{2k}=
k!\,B_h^k
\end{aligned}
\]
where $B_h:=\iint_{\mathbb R^2\times\mathbb R^2}|h(x)|\,|h(y)|\,|x-y|^{-1/4}\,dx\,dy$. Since $h$ has compact support and $s>1$, $h$ is bounded. Moreover,
$|x-y|^{-1/4}$ is locally integrable in two dimensions, so $B_h<\infty$.
Since $\#\{\xi\in\{\pm1\}^{2k}:\:\sum_i\xi_i=0\}=\binom{2k}{k}$, in view of~\eqref{e.<>2k} and $M_{2k}(h)$ given in~\eqref{e.M_n(h)=}, we have
\[
\begin{aligned}
    |M_{2k}(h)|
    &\leq
    \binom{2k}{k}
    k!\,B_h^k
    =
    \frac{(2k)!}{k!}
    B_h^k
    \leq
    A_h^{2k}(2k)!
\end{aligned}
\]
for some constant $A_h$.
By Stirling's formula, we have $((2k)!)^{-1/(2k)} \asymp \frac1k$ and thus
\[
    M_{2k}(h)^{-1/(2k)}
    \geq
    A_h^{-1}((2k)!)^{-1/(2k)}
    \geq
    \frac{c_h}{k}
s\]
for some $c_h>0$. Now, we can conclude $\sum_{k=1}^\infty M_{2k}(h)^{-1/(2k)} = \infty$, which is Carleman's condition.
\end{proof}

\begin{proof}[Proof of Theorem~\ref{t.Phi}]
For brevity, we write $\stackrel{\d}{\to}$ to denote convergence in law along a sequence and in a space that will be clear from the context.
Fix $s>1$. 
Let $(\delta_j)_{j\in\N}$ be any sequence with $\delta_j\to0$. By tightness in Lemma~\ref{l.tightness}, after
passing to a subsequence, still denoted by $(\delta_j)_{j\in\N}$, there exists a random
distribution $\Phi$ such that $\Phi_{\delta_j}\stackrel{\d}{\to} \Phi$ in $\mathcal H^{-s}(\mathbb R^2)$.
Let us identify the law of this arbitrary subsequential limit.

Fix any $h\in\mathcal H^s_0(\mathbb R^2)$ and set $Y_j:=\int \Phi_{\delta_j}h$ and $Y:=\int \Phi h$. 
Since $u\mapsto \int uh$ is continuous on $\mathcal H^{-s}(\mathbb R^2)$, we have $Y_j\stackrel{\d}{\to}Y$.
We claim that $Y$ has the limiting moments from Lemma~\ref{l.mom_Phi_delta}.
Let $p\in\N$ and choose an even integer $q>p$. 
Lemma~\ref{l.mom_Phi_delta} ensures that $\sup_j \E_{\delta_j}\Ll[|Y_j|^q \Rr]<\infty$ and thus $(Y_j^p)_j$ is uniformly integrable. Since $Y_j\stackrel{\d}{\to} Y$, we get $\E\Ll[ Y^p\Rr]=\lim_{j\to\infty}\E_{\delta_j}\Ll[ Y_j^p\Rr]$.
Applying Lemma~\ref{l.mom_Phi_delta} again gives
\begin{align}\label{e.E[()^p]=M_p(h)}
    \E\Ll[
    \left(\int \Phi h\right)^p
    \Rr]
    =
    M_p(h).
\end{align}
By Lemma~\ref{l.carleman_Phi}, the law of $\int\Phi h$ is uniquely
determined by these moments and thus independent of
the subsequential limit.

It remains to show that the law of $\Phi$ is independent of the subsequence. Let $d\in\N$,
$h_1,\ldots,h_d\in\mcl H^s_0(\R^2)$, and $a\in\R^d$. By the above
the law of
$\sum_{i=1}^d a_i\int\Phi h_i=\int\Phi(\sum_{i=1}^d a_i h_i)$ is independent
of the subsequence. By Cramér--Wold, the joint law of
$(\int\Phi h_1,\ldots,\int\Phi h_d)$ is independent of the subsequence. Since
the maps $u\mapsto\int uh$, for $h\in\mcl H^s_0(\R^2)$, are continuous, separate
points, and generate the Borel $\sigma$-algebra associated with the local
$\mcl H^{-s}$-topology, all subsequential limits have the same law.

Finally, tightness in \(\mcl H^{-s}(\R^2)\), endowed with this local topology,
together with uniqueness of subsequential limits implies $\Phi_\delta \stackrel{\d}{\to} \Phi_0$ in $\mcl H^{-s}(\R^2)$.
The moment formula for \(\Phi_0\) follows from~\eqref{e.E[()^p]=M_p(h)}.
Moreover, by Lemma~\ref{l.carleman_Phi}, the law of \(\int \Phi_0 h\) is
uniquely determined by these moments.
\end{proof}

\appendix

\section{Choice of the slowly varying sequence \texorpdfstring{$\eps(\delta)$}{ε(δ)}}

The following procedure of choosing $\eps(\delta)$ that decays slowly as $\delta\to0$ was used in the proof of Proposition~\ref{p}.

\begin{lemma}\label{lem:choose_epsilon}
Fix $n\in2\N$ and $d\in(0,1)$. Let $\rho$ be a function satisfying $\rho(t)\to0$ as $t\downarrow0$. There exists
$\eps(\delta)\in(0,d/2)$ such that we have, as $\delta\to0$,
\begin{align*}
    \eps(\delta)\to0,\qquad \tfrac{\delta}{\eps(\delta)}\to0,\qquad
    \eps(\delta)^{\rho(\delta/\eps(\delta))}\to1,
\end{align*}
and, for every $\tau\in4\Z\cap[-n,n]$ and every $\xi\in\Xi_\tau$, the convergence in
\emph{\eqref{e.GFF_compute}} holds with $\eps$ replaced by $\eps(\delta)$, uniformly in $x\in X(n,d)$.
\end{lemma}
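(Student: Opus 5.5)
This is a diagonal extraction. I first fix the ingredients that do not depend on $\delta$ and then choose $\eps(\delta)$ by a piecewise-constant construction along a sequence $\eps_k\downarrow0$.

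\emph{Convergence at each fixed $\eps$.} By Lemma~\ref{l.GFF_test_function} together with~\eqref{e.|c_eps(v)|<}, and by Lemma~\ref{l.e.GFF_compute}, the following holds for every fixed $\eps\in(0,d/2)$, every $\tau\in4\Z\cap[-n,n]$ and every $\xi\in\Xi_\tau$. The quantity $\eps^{-n/8}\phi_{\delta\Z^2}[\mathcal A_{F_{x,\xi,\eps}}(\mcl L)]$ converges as $\delta\to0$, uniformly in $x\in X(n,d)$. Its limit is $G_{\xi,\eps}(x)$, which equals $e^{nc_0}\prod_{i<j}|x_i-x_j|^{\xi_i\xi_j/4}e^{2\xi_i\xi_jc_\eps(x_i-x_j)}$ if $\tau=0$ and equals $0$ otherwise. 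By~\eqref{e.|c_eps(v)|<} we have $\sup_{x\in X(n,d)}|G_{\xi,\eps}(x)-G_{\xi,0}(x)|\to0$ as $\eps\to0$, and this is the $o_\eps(1)$ in~\eqref{e.GFF_compute}.

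\emph{Choice of the threshold $\delta_k$.} Since there are finitely many $\xi$, for each $k$ I can pick $\delta_k>0$ with $\delta_k\downarrow0$ strictly such that three conditions hold for all $\delta\le\delta_k$:
\begin{itemize}
\item $\max_\xi\sup_{x\in X(n,d)}\bigl|\eps_k^{-n/8}\phi_{\delta\Z^2}[\mathcal A_{F_{x,\xi,\eps_k}}(\mcl L)]-G_{\xi,\eps_k}(x)\bigr|\le 1/k$;
\item $\delta/\eps_k\le 1/k$;
\item $\sup_{t\le\delta_k/\eps_k}|\rho(t)|\,\log(1/\eps_k)\le 1/k$.
\end{itemize}
The last condition is achievable because $\rho(t)\to0$ as $t\to0$ and $\log(1/\eps_k)$ is a fixed number once $k$ is fixed. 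Replacing $\rho$ by $\bar\rho(t)=\sup_{s\le t}|\rho(s)|$ first makes the supremum monotone.

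\emph{Definition of $\eps(\delta)$ and verification.} For $\delta\in(\delta_{k+1},\delta_k]$, set $\eps(\delta)=\eps_k$, and for $\delta>\delta_1$ set $\eps(\delta)=\eps_1$. As $\delta\to0$ the index $k=k(\delta)$ tends to infinity, so the properties follow in turn:
\begin{itemize}
\item $\eps(\delta)=\eps_k\to0$;
\item $\delta/\eps(\delta)\le1/k\to0$;
\item $|\log \eps(\delta)^{\rho(\delta/\eps(\delta))}|\le \bar\rho(\delta_k/\eps_k)\log(1/\eps_k)\le 1/k$, so $\eps(\delta)^{\rho(\delta/\eps(\delta))}\to1$;
\item $\eps(\delta)^{-n/8}\phi[\mathcal A_{F_{x,\xi,\eps(\delta)}}]-G_{\xi,0}(x)$ is bounded by $1/k+\sup_x|G_{\xi,\eps_k}-G_{\xi,0}|\to0$, uniformly in $x\in X(n,d)$. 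This is precisely~\eqref{e.GFF_compute} with $\eps$ replaced by $\eps(\delta)$.
\end{itemize}

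\emph{Main obstacle.} The one delicate point is the uniform-in-$x$ convergence at each fixed $\eps$. The paper defers this to Lemma~\ref{lem:uniform_gff_convergence}, which I take as given. With that in hand, the remainder is the routine diagonal bookkeeping above.
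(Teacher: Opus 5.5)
Your proposal is correct and is essentially the paper's proof: a piecewise-constant diagonal choice $\eps(\delta)=\eps_k$ on $(\delta_{k+1},\delta_k]$, with $\delta_k$ chosen so that three things hold: the uniform-in-$x$ error at $\eps_k$ is at most $1/k$, $\delta_k/\eps_k$ is small, and the $\rho$-term times $\log(1/\eps_k)$ is at most $1/k$. The $e^{2\xi_i\xi_jc_\eps}$ factors are then removed uniformly on $X(n,d)$ via \eqref{e.|c_eps(v)|<}, and your monotone envelope $\sup_{s\le t}|\rho(s)|$ is only a slightly more careful version of the paper's condition $\sup_{0<t<t_k}\rho(t)\log(1/\eps_k)\le 1/k$.
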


\begin{proof}
Choose $\eps_k\downarrow0$ as $k\to\infty$ with $\eps_k<d/2$. For $\xi\in\Xi_\tau$,
$\tau\in4\Z\cap[-n,n]$, set
\begin{align*}
    H_{\xi,\eps}(x)=
    \begin{cases}
    e^{nc_0}\prod_{1\le i<j\le n}
    \Ll(|x_i-x_j|^{\xi_i\xi_j/4}
    e^{2\xi_i\xi_jc_\eps(x_i-x_j)}\Rr),
    & \xi\in\Xi_0,\\
    0, & \xi\in\Xi_\tau,\ \tau\neq0,
    \end{cases}
\end{align*}
and
\begin{align*}
    \Delta_\xi(\delta,\eps)
    =
    \sup_{x\in X(n,d)}
    \Ll|
    \eps^{-n/8}\phi_{\delta\Z^2}
    \Ll[\mcl A_{F_{x,\xi,\eps}}(\mcl L)\Rr]
    -H_{\xi,\eps}(x)
    \Rr|.
\end{align*}
By Lemmas~\ref{l.GFF_test_function} and~\ref{l.e.GFF_compute}, for every fixed $k$,
$\Delta_\xi(\delta,\eps_k)\to0$ as $\delta\to0$. Since there are only finitely many
$\xi$'s, we may choose $\delta_k\downarrow0$ such that, for every $\delta<\delta_k$,
\begin{align*}
    \max_{\tau\in4\Z\cap[-n,n]}\max_{\xi\in\Xi_\tau}
    \Delta_\xi(\delta,\eps_k)\le \tfrac1k .
\end{align*}
We choose $\delta_k$ smaller if necessary so that
\begin{align*}
    \tfrac{\delta_k}{\eps_k}\le t_k
    \qquad\text{and}\qquad
    \sup_{0<t<t_k}\rho(t)\log\tfrac1{\eps_k}\le \tfrac1k ,
\end{align*}
for some $t_k\downarrow0$. Define $\eps(\delta)=\eps_k$ when
$\delta_{k+1}\le\delta<\delta_k$. Then $\eps(\delta)\to0$,
$\delta/\eps(\delta)\to0$, and
\begin{align*}
    \rho\Ll(\tfrac{\delta}{\eps(\delta)}\Rr)
    \log\tfrac1{\eps(\delta)}\to0,
\end{align*}
which gives $\eps(\delta)^{\rho(\delta/\eps(\delta))}\to1$. The definition of
$\delta_k$ gives the desired diagonal convergence. Finally, when $\xi\in\Xi_0$,
the factor $e^{2\xi_i\xi_jc_{\eps(\delta)}(x_i-x_j)}$ tends to $1$ uniformly on
$X(n,d)$, due to $|x_i-x_j|\ge d$ (see~\eqref{e.|c_eps(v)|<}). This proves the lemma.
\end{proof}

\section{Uniform convergence of test functions supported on \texorpdfstring{$\eps$-balls}{ε-balls}}

We verify the uniformity used in Lemmas~\ref{l.GFF_test_function} and~\ref{l.e.GFF_compute}. The statement below is slightly stronger than what is needed there. In those lemmas, $\eps$ and $\xi$ are fixed, and only uniformity in $x$ is required.

\begin{lemma}\label{lem:uniform_gff_convergence}
Let $n\in\N$.
For $r>1$, let $\Delta(r)$ be the collection of parameters $(x,\xi,\eps)=((x_i)_{i\in\llbracket 1,n\rrbracket},(\xi_i)_{i\in\llbracket 1,n\rrbracket},\eps)\in(\R^2)^n\times \R^n\times (0,\infty)$ satisfying $\max_{i\in\llbracket 1,n\rrbracket}|x_i|\leq r$, $\max_{i\in\llbracket 1,n\rrbracket}|\xi_i|\leq r$, $\sum_{i=1}^n\xi_i=0$, and $\eps\in[\frac{1}{r},r]$. Given $(x,\xi,\eps)$, we consider the test function $F_{x,\xi,\eps} = \sum_{i=1}^n \frac{\xi_i}{2\eps^2}\mathbf{1}_{B_\eps(x_i)}$.
Then, for every $r>1$, we have
\begin{align*}
    \lim_{\delta\to0}\sup_{(x,\xi,\eps)\in\Delta(r)}\Ll|\phi_{\delta\Z^2}\Ll[\mathcal{A}_{F_{x,\xi,\eps}}(\mcl L)\Rr]- \exp\left( \frac{1}{2\pi^2}\iint F_{x,\xi,\eps}(\rz)F_{x,\xi,\eps}(\rz')\log|\rz-\rz'|\d\rz\d\rz'\right)\Rr|=0.
\end{align*}
\end{lemma}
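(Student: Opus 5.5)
We argue by contradiction and compactness, using Theorem~\ref{thm:M formula} pointwise together with an equicontinuity estimate in the parameters. Write $G(x,\xi,\eps):=\exp\bigl(\frac{1}{2\pi^2}\iint F_{x,\xi,\eps}(\rz)F_{x,\xi,\eps}(\rz')\log|\rz-\rz'|\,\d\rz\,\d\rz'\bigr)$ and $A_\delta(x,\xi,\eps):=\phi_{\delta\Z^2}[\mathcal A_{F_{x,\xi,\eps}}(\mcl L)]$.

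\textbf{Step 1: compactness and continuity of the limit.} The set $\Delta(r)$ is a compact subset of $(\R^2)^n\times\R^n\times(0,\infty)$, since the constraint $\sum_i\xi_i=0$ is closed and $\eps$ ranges over $[1/r,r]$. The map $(x,\xi,\eps)\mapsto F_{x,\xi,\eps}$ is continuous into $L^1\cap L^\infty$ with supports in a fixed ball. The logarithmic kernel is locally integrable, so $(x,\xi,\eps)\mapsto G(x,\xi,\eps)$ is continuous on $\Delta(r)$. Each $F_{x,\xi,\eps}$ is mean zero and lies in $\mcl H^s_0$ for $s<1/2$. Hence Theorem~\ref{thm:M formula} gives $A_\delta\to G$ pointwise on $\Delta(r)$.

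\textbf{Step 2: reduction to asymptotic equicontinuity.} By a standard Arzel\`a--Ascoli type argument, pointwise convergence to a continuous limit on a compact set becomes uniform once we show
\[
\lim_{\eta\to0}\limsup_{\delta\to0}\ \sup\bigl\{|A_\delta(p)-A_\delta(p')|:\ p,p'\in\Delta(r),\ |p-p'|<\eta\bigr\}=0.
\]
Indeed, suppose uniformity fails. Then there are $\delta_k\to0$ and points $p_k\to p\in\Delta(r)$ with $|A_{\delta_k}(p_k)-G(p_k)|\ge c>0$. By the estimate above, $A_{\delta_k}(p_k)-A_{\delta_k}(p)\to0$; by Step 1, $A_{\delta_k}(p)\to G(p)$; and by continuity, $G(p_k)\to G(p)$. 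This is a contradiction.

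\textbf{Step 3: the equicontinuity estimate (main obstacle).} We bound $|\mathcal A_F(\mcl L)-\mathcal A_{F'}(\mcl L)|$ pathwise, with $F=F_p$ and $F'=F_{p'}$. Both products run over the same loops, and only loops surrounding part of the common support contribute nontrivially. By the telescoping bound $|\prod_\ell a_\ell-\prod_\ell b_\ell|\le\sum_\ell|a_\ell-b_\ell|$ for factors in $[-1,1]$,
\[
|\mathcal A_F(\mcl L)-\mathcal A_{F'}(\mcl L)|\le \sum_{\ell}\Bigl|\int_{\itr(\ell)}(F-F')\Bigr|,
\]
where the sum is over loops separating the relevant region. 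We split these loops into two groups.

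\emph{Macroscopic loops.} Consider loops of diameter at least $\kappa$ that intersect the support. By RSW for the critical $q=4$ FK model (uniform in $\delta$), their number has uniformly bounded expectation $N(\kappa)$. Each such loop contributes at most $\|F-F'\|_{L^1}\lesssim_r\eta$, so this group contributes at most $N(\kappa)\,C_r\,\eta$.

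\emph{Small loops.} For loops of diameter below $\kappa$, we use $|\int_{\itr(\ell)}(F-F')|\le\|F-F'\|_\infty|\itr(\ell)|$. This sup norm is not small: the indicator functions of the shifted balls differ on annuli, so $\|F-F'\|_\infty\lesssim r^3$ rather than $O(\eta)$. Instead we bound each such contribution by $\int_{\itr(\ell)}(|F|+|F'|)\lesssim r^3\,\mathrm{diam}(\ell)^2$, and only for loops meeting the symmetric difference of the balls, a region of area $O(\eta)$. The expected total is then at most
\[
\E\sum_{\ell:\ \ell\cap U\ne\varnothing,\ \mathrm{diam}\,\ell<\kappa}\mathrm{diam}(\ell)^2 .
\]
Here $U$ is the $\kappa$-neighbourhood of a set of area $O(\eta)$, plus the loops near the boundary of the support. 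A dyadic decomposition with RSW (a scale-$2^{-k}$ box is surrounded by $O(1)$ loops of that scale in expectation) bounds this by $C(\kappa\log(1/\kappa)+\eta/\kappa)$, uniformly in $\delta$. Loops that cross the boundaries of the balls but have small diameter are also controlled by this count.

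Choosing $\kappa$ small and then $\eta$ small yields the equicontinuity estimate, which completes the proof. The delicate point is this small-loop control uniformly in $\delta$. A cruder alternative, if the counting is awkward, is to compare the observables with mollified test functions, where $\|F-F'\|_\infty\lesssim\eta$, and to bound the mollification error by the same area-of-loops estimate.
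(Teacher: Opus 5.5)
Steps 1 and 2 are sound. One slip: $(x,\xi,\eps)\mapsto F_{x,\xi,\eps}$ is continuous into $L^1$ but not into $L^\infty$; $L^1$-continuity plus a uniform sup bound is enough for continuity of $G$.

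\textbf{The gap is in Step 3.} The first-order telescoping bound $|\mathcal A_F(\mcl L)-\mathcal A_{F'}(\mcl L)|\le\sum_\ell|\int_{\itr(\ell)}(F-F')|$ cannot be controlled uniformly in $\delta$, because small loops are too numerous. By RSW, every point is surrounded by order $\log(\kappa/\delta)$ nested loops with diameters in $[\delta,\kappa]$. Hence for any fixed disc $D$, the expected total interior area $\E\sum_{\ell\subset D,\,\mathrm{diam}(\ell)<\kappa}|\itr(\ell)|$ is of order $|D|\log(1/\delta)$. This breaks your argument in two places.
\begin{itemize}
\item If $\eps\neq\eps'$ or $\xi\neq\xi'$, then $F-F'$ is of order $\eta$ on the whole bulk of the balls, not only on their symmetric difference. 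The small loops there, which you never treat, contribute of order $\eta\log(1/\delta)$.
\item Even for a pure translation, the loops inside the symmetric difference (area $\asymp\eta$) contribute of order $\eta\log(\eta/\delta)$. So your bound $C(\kappa\log(1/\kappa)+\eta/\kappa)$, uniform in $\delta$, is false. Summing $\mathrm{diam}(\ell)^2$ over the $\asymp|U|2^{2k}$ loops of scale $2^{-k}$ meeting $U$ gives $\asymp|U|$ per dyadic scale, and there are $\log_2(\kappa/\delta)$ scales below $\kappa$, not $\log(1/\kappa)$.
\end{itemize}
The mollified variant diverges the same way. So for each fixed $\eta>0$ the quantity you bound has $\limsup_{\delta\to0}=+\infty$, and asymptotic equicontinuity does not follow.

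\textbf{The missing idea is the second-order cancellation} $1-\cos a=O(a^2)$ for small loops; this is the same mechanism that keeps $\bc_\delta(\eps)$ away from $0$ in Lemma~\ref{l.kappa}. For loops of diameter below $\kappa$, use
$|\cos a-\cos b|=2|\sin\frac{a+b}{2}\sin\frac{a-b}{2}|\le\frac12|a+b|\,|a-b|$, where $|a|,|b|\lesssim_{n,r}\mathrm{diam}(\ell)^2$. Each such loop then contributes $\lesssim_{n,r}\mathrm{diam}(\ell)^4$, and $\E\sum_{\ell\cap B_{2r}(0)\neq\varnothing,\ \mathrm{diam}(\ell)<\kappa}\mathrm{diam}(\ell)^4\lesssim_r\kappa^2$ uniformly in $\delta$. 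For the macroscopic loops keep your bound $N(\kappa)\|F-F'\|_{L^1}$, but count loops meeting $B_{2r}(0)$ rather than the support, since a separating loop need not touch the support. With these two pieces you get the equicontinuity, and your argument closes using only RSW and Theorem~\ref{thm:M formula}.

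\textbf{The paper's route avoids loop counting altogether.} By the Baxter--Kelland--Wu coupling, $\phi_{\delta\Z^2}[\mathcal A_F(\mcl L)]=\E[e^{\mathrm i\int Fh_\delta}]$ for the six-vertex height function $h_\delta$. Tightness of $(h_\delta)$ in $\mcl H^{-s}([-L,L]^2)$, a consequence of the GFF convergence, gives $|\int(f-g)h_\delta|\le M\|f-g\|_{\mcl H^s_0}$ outside an event of small probability, uniformly in $\delta$. Since $(x,\xi,\eps)\mapsto F_{x,\xi,\eps}$ is continuous into $\mcl H^s_0$ for $s<\frac12$, Arzel\`a--Ascoli on the compact image of $\Delta(r)$ concludes. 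That route buys equicontinuity in one line from a stronger input (tightness of the height field); the repaired loop argument needs only RSW-type inputs but requires the second-order expansion.
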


\begin{proof}
Let $h_\delta$ be the six-vertex configuration coupled with the random cluster model $\phi_{\delta\Z^2}$ via the Baxter--Kelland--Wu correspondence (see \cite{baxter1976equivalence,DumManTas16b,magicformula}), which gives $\phi_{\delta\Z^2}\Ll[\mathcal{A}_{F_{x,\xi,\eps}}(\mcl L)\Rr] = \E\Ll[e^{\mathrm{i}\int F_{x,\xi,\eps}h_\delta}\Rr]$ where $h_\delta$ is viewed as a random distribution on $\R^2$ in a natural way. Then, the exponential term in the display is equal to $\E\Ll[e^{\mathrm{i}\int F_{x,\xi,\eps} h}\Rr]$ where $h$ is the planar GFF with variance $\frac{2}{\pi}$. 
Hence, it is equivalent to showing
\begin{align*}
    \lim_{\delta\to0}\sup_{(x,\xi,\eps)\in\Delta(r)}\Ll|\E\Ll[e^{\mathrm{i}\int F_{x,\xi,\eps}h_\delta}\Rr]- \E\Ll[e^{\mathrm{i}\int F_{x,\xi,\eps} h}\Rr]\Rr|=0.
\end{align*}
Choose $L>0$ large enough so that $\supp F_{x,\xi,\eps}\subset(-L,L)^2$.
Fix any $s\in(0,1/2)$ and we have $F_{x,\xi,\eps} \in V:= \mcl H^s_0([-L,L]^2)$ (see the discussion below~\eqref{e.Xi_tau=}).
Restrictions of $h$ and $h_\delta$ to $[-L,L]^2$ are random distributions in $V^* := H^{-s}([-L,L]^2)$. 

Define $\Psi : \Delta(r) \to V$ by $\Psi(x, \xi, \eps) := F_{x,\xi,\eps}$. From the expression of $F_{x,\xi,\eps}$, we can verify that $\Psi$ is continuous. Thus, $\mathcal{F} := \Psi(\Delta(r))$ is compact in $V$ as $\Delta(r)$ is compact.
For any $f\in \mathcal F$, write $\varphi_\delta(f) := \E[e^{\mathrm{i}\int f\, h_\delta}]$ and $\varphi(f) := \E[e^{\mathrm{i}\int f\, h}]$. By~\cite[Theorem~2.8 and Definition~2.7~(iii)]{GFF6vertex}, $h_\delta$ converges to $h$ in law in $V^*$, which implies $\varphi_\delta(f) \to \varphi(f)$ pointwise for every fixed $f \in V$.

We now show that the family $\{\varphi_\delta\}_{\delta>0}$ is equicontinuous on $V$. 
By Prokhorov's Theorem, the convergence in law of $h_\delta$ implies that $\{h_\delta\}_{\delta>0}$ is tight. Hence, for any $\eta > 0$, there is a compact set $K \subset V^*$ such that $\sup_{\delta > 0} \mathbb{P}(h_\delta \notin K) < \eta$.
Because $K$ is compact in $V^*$, it is bounded; hence, there exists a constant $M > 0$ such that $\sup_{u \in K} \|u\|_{V^*} \leq M$. 
For any two test functions $f, g \in V$, we have
\begin{align*}
    &|\varphi_\delta(f) - \varphi_\delta(g)| 
    = \Ll| \E\Ll[ e^{\mathrm{i}\int f\,h_\delta} - e^{\mathrm{i}\int g\, h_\delta} \Rr] \Rr|\leq \E\Ll[ \Ll| e^{\mathrm{i}\int(f-g) \,h_\delta} - 1 \Rr| \Rr] 
    \\
    &= \E\Ll[ \Ll| e^{\mathrm{i}\int(f-g) \,h_\delta} - 1 \Rr| \Ll(\one_{\{h_\delta \in K\}}+\one_{\{h_\delta \notin K\}}\Rr)\Rr]
    \leq \E\Ll[ \Ll|\int (f-g)\, h_\delta\Rr| \1_{\{h_\delta \in K\}} \Rr] + 2 \mathbb{P}(h_\delta \notin K).
\end{align*}
The Cauchy-Schwarz inequality gives
$\Ll|\int (f-g)\, h_\delta\Rr| \leq \|f-g\|_V \|h_\delta\|_{V^*}$. On the event $\{h_\delta \in K\}$, we have $\|h_\delta\|_{V^*} \leq M$, which together with the choice of $K$ gives
\begin{align*}
    |\varphi_\delta(f) - \varphi_\delta(g)| 
    &\leq  M \|f-g\|_V + 2\eta.
\end{align*}
This implies the equicontinuity of $\{\varphi_\delta\}_{\delta>0}$ on $V$.

Since $\varphi_\delta$ converges to $\varphi$ pointwise, $\{\varphi_\delta\}_{\delta>0}$ is equicontinuous, and $\mathcal{F}$ is compact, the Arzelà-Ascoli theorem yields $\lim_{\delta\to0} \sup_{f \in \mathcal{F}} \Ll| \varphi_\delta(f) - \varphi(f) \Rr| = 0$.
Recalling that $\mathcal{F} = \{F_{x,\xi,\eps} : (x,\xi,\eps)\in\Delta(r)\}$, this uniform convergence over $\mathcal{F}$ is exactly the stated limit. This concludes the proof.
\end{proof}

\medskip

\noindent \textbf{Acknowledgements.} Hong-Bin Chen acknowledges funding from the NYU Shanghai Start-Up Fund and support from the NYU–ECNU Institute of Mathematical Sciences at NYU Shanghai.

\bibliographystyle{plain}

\begin{thebibliography}{10}

\bibitem{aru2022density}
Juhan Aru, Antoine Jego, and Janne Junnila.
\newblock Density of imaginary multiplicative chaos via malliavin calculus.
\newblock {\em Probability Theory and Related Fields}, 184(3):749--803, 2022.

\bibitem{magicformula}
E.~Averous, H.-B. Chen, H.~Duminil-Copin, T.~He, D.~Krachun, I.~Manolescu, and
  J.~Xia.
\newblock Rotational invariance of any six-vertex model scaling limit with
  $-1\le\delta\le-\frac12$.
\newblock Manuscript in preparation.

\bibitem{baxter1976equivalence}
Rodney~J Baxter, Stewart~B Kelland, and Frank~Y Wu.
\newblock Equivalence of the potts model or whitney polynomial with an ice-type
  model.
\newblock {\em Journal of Physics A: Mathematical and General}, 9(3):397, 1976.

\bibitem{Beffara08}
Vincent Beffara.
\newblock The dimension of the sle curves.
\newblock {\em Annals of Probability}, 36(4):1421--1452, 2008.

\bibitem{beffara2012self}
Vincent Beffara and Hugo Duminil-Copin.
\newblock The self-dual point of the two-dimensional random-cluster model is
  critical for $q\geq1$.
\newblock {\em Probability Theory and Related Fields}, 153(3):511--542, 2012.

\bibitem{cai2025three}
Gefei Cai, Haoyu Liu, Baojun Wu, and Zijie Zhuang.
\newblock {Three-point connectivity constant for $q$-state Potts spin
  clusters}.
\newblock {\em arXiv preprint arXiv:2510.05850}, 2025.

\bibitem{camia2024conformal}
Federico Camia.
\newblock Conformal covariance of connection probabilities and fields in 2d
  critical percolation.
\newblock {\em Communications on Pure and Applied Mathematics},
  77(3):2138--2176, 2024.

\bibitem{camia2024conformally}
Federico Camia and Yu~Feng.
\newblock Conformally covariant probabilities, operator product expansions, and
  logarithmic correlations in two-dimensional critical percolation.
\newblock {\em arXiv preprint arXiv:2407.04246}, 2024.

\bibitem{camia2024logarithmic}
Federico Camia and Yu~Feng.
\newblock Logarithmic correlation functions in 2d critical percolation.
\newblock {\em Journal of High Energy Physics}, 2024(8):1--25, 2024.

\bibitem{camia2025}
Federico Camia and Yu~Feng.
\newblock Conformal covariance of connection probabilities in the 2d critical
  fk-ising model.
\newblock {\em Stochastic Processes and their Applications}, 189:104734, 06
  2025.

\bibitem{camia2015planar}
Federico Camia, Christophe Garban, and Charles~M Newman.
\newblock Planar ising magnetization field i. uniqueness of the critical
  scaling limit.
\newblock {\em The Annals of Probability}, pages 528--571, 2015.

\bibitem{cardy1980scaling}
John~L Cardy, M~Nauenberg, and DJ~Scalapino.
\newblock {Scaling theory of the Potts-model multicritical point}.
\newblock {\em Physical Review B}, 22(5):2560, 1980.

\bibitem{chelkak2021correlations}
Dmitry Chelkak, Cl{\'e}ment Hongler, and Konstantin Izyurov.
\newblock Correlations of primary fields in the critical ising model.
\newblock {\em arXiv preprint arXiv:2103.10263}, 2021.

\bibitem{CHI2015}
Dmitry Chelkak, Clément Hongler, and Konstantin Izyurov.
\newblock Conformal invariance of spin correlations in the planar ising model.
\newblock {\em Annals of Mathematics}, 181(3):1087--1138, 2015.

\bibitem{qequal4}
H.-B. Chen, H.~Duminil-Copin, T.~He, F.~Jacopin, D.~Krachun, I.~Manolescu, and
  J.~Xia.
\newblock Critical exponents for the planar random-cluster model with
  cluster-weight $q=4$.
\newblock {\em arXiv preprint arXiv:2605.30030}, 2026.

\bibitem{den1983extended}
Marcel den Nijs.
\newblock {Extended scaling relations for the magnetic critical exponents of
  the Potts model}.
\newblock {\em Physical Review B}, 27(3):1674, 1983.

\bibitem{dotsenko1984conformal}
Vladimir~S Dotsenko and Vladimir~A Fateev.
\newblock Conformal algebra and multipoint correlation functions in {2D}
  statistical models.
\newblock {\em Nuclear Physics B}, 240(3):312--348, 1984.

\bibitem{DumManTas16b}
Hugo Duminil{-}Copin, Maxime Gagnebin, Matan Harel, Ioan Manolescu, and Vincent
  Tassion.
\newblock Discontinuity of the phase transition for the planar random-cluster
  and potts models with $q>4$.
\newblock {\em Annales scientifiques de l'{\'E}cole Normale Sup{\'e}rieure},
  54:1363--1413, 2021.

\bibitem{GFF6vertex}
Hugo Duminil-Copin, Karol Kozlowski, Piet Lammers, and Ioan Manolescu.
\newblock {Gaussian free field convergence of the six-vertex model with
  $-1\leq\Delta\leq-\frac{1}{2}$}.
\newblock {\em arXiv preprint arXiv:2603.06268}, 2026.

\bibitem{duminil2020rotational}
Hugo Duminil-Copin, Karol~Kajetan Kozlowski, Dmitry Krachun, Ioan Manolescu,
  and Mendes Oulamara.
\newblock Rotational invariance in critical planar lattice models.
\newblock {\em arXiv preprint arXiv:2012.11672}, 2020.

\bibitem{duminil2022planar}
Hugo Duminil-Copin and Ioan Manolescu.
\newblock Planar random-cluster model: scaling relations.
\newblock {\em Forum of Mathematics, Pi}, 10:e23, 2022.

\bibitem{DumSidTas13}
Hugo Duminil-Copin, Vladas Sidoravicius, and Vincent Tassion.
\newblock Continuity of the phase transition for planar random-cluster and
  {P}otts models with $1\leq q\leq 4$.
\newblock {\em Communications in Mathematical Physics}, 349(1):47--107, 2017.

\bibitem{durrett2019probability}
Rick Durrett.
\newblock {\em Probability: Theory and Examples}.
\newblock Number~49 in Cambridge Series in Statistical and Probabilistic
  Mathematics. Cambridge University Press, Cambridge, 5 edition, 2019.

\bibitem{garban2013pivotal}
Christophe Garban, G{\'a}bor Pete, and Oded Schramm.
\newblock Pivotal, cluster, and interface measures for critical planar
  percolation.
\newblock {\em Journal of the American Mathematical Society}, 26(4):939--1024,
  2013.

\bibitem{hornjohnson2013matrix}
Roger~A. Horn and Charles~R. Johnson.
\newblock {\em Matrix Analysis}.
\newblock Cambridge University Press, Cambridge, 2 edition, 2013.

\bibitem{iGMC}
Janne Junnila, Eero Saksman, and Christian Webb.
\newblock {Imaginary multiplicative chaos: Moments, regularity and connections
  to the Ising model}.
\newblock {\em The Annals of Applied Probability}, 30(5):2099 -- 2164, 2020.

\bibitem{kadanoff1979correlation}
Leo~P Kadanoff and Alan~C Brown.
\newblock Correlation functions on the critical lines of the {Baxter} and
  {Ashkin-Teller} models.
\newblock {\em Annals of Physics}, 121(1-2):318--345, 1979.

\bibitem{kohler2025fuzzy}
Laurin K{\"o}hler-Schindler and Matthis Lehmkuehler.
\newblock {The fuzzy Potts model in the plane: scaling limits and arm
  exponents}.
\newblock {\em Probability Theory and Related Fields}, 191(1):287--359, 2025.

\bibitem{LSW2004}
Gregory~F. Lawler, Oded Schramm, and Wendelin Werner.
\newblock Conformal invariance of planar loop-erased random walks and uniform
  spanning trees.
\newblock {\em Annals of Probability}, 32(1B):939--995, 2004.

\bibitem{liu2024bulk}
Haoyu Liu, Xin Sun, Pu~Yu, and Zijie Zhuang.
\newblock {The bulk one-arm exponent for the $\mathrm{CLE}_{\kappa'}$
  percolations}.
\newblock {\em arXiv preprint arXiv:2410.12724}, 2024.

\bibitem{MillerSheffield16I}
Jason Miller and Scott Sheffield.
\newblock {Imaginary Geometry I: Interacting SLEs}.
\newblock {\em Probability Theory and Related Fields}, 164:553--705, 2016.

\bibitem{MillerSheffield16II}
Jason Miller and Scott Sheffield.
\newblock {Imaginary Geometry II: Reversibility of SLE${}_\kappa(\rho)$ for
  $\kappa\in(0,4)$}.
\newblock {\em Annals of Probability}, 44(3):1647--1722, 2016.

\bibitem{MillerSheffield16III}
Jason Miller and Scott Sheffield.
\newblock {Imaginary Geometry III: Conformal welding}.
\newblock {\em Probability Theory and Related Fields}, 166:553--626, 2016.

\bibitem{Onsager44}
Lars Onsager.
\newblock {Crystal Statistics. I. A Two-Dimensional Model with an
  Order–Disorder Transition}.
\newblock {\em Physical Review}, 65(3-4):117--149, 1944.

\bibitem{salas1997logarithmic}
Jesus Salas and Alan~D Sokal.
\newblock {Logarithmic corrections and finite-size scaling in the
  two-dimensional 4-state Potts model}.
\newblock {\em Journal of statistical physics}, 88(3):567--615, 1997.

\bibitem{sickel2020regularity}
Winfried Sickel.
\newblock On the regularity of characteristic functions.
\newblock In {\em Anomalies in Partial Differential Equations}, pages 395--441.
  Springer, 2020.

\bibitem{Smirnov10}
Stanislav Smirnov.
\newblock {Conformal invariance in random cluster models. I. Holomorphic
  fermions in the Ising model}.
\newblock {\em Annals of Mathematics}, 172(2):1435--1467, 2010.

\bibitem{WuMcCoyTracyBarouch76}
Tai~Tsun Wu, Barry~M. McCoy, Craig~A. Tracy, and Elie Barouch.
\newblock {Spin–spin correlation functions for the two-dimensional Ising
  model: Exact theory in the scaling region}.
\newblock {\em Physical Review B}, 13(1):316--374, 1976.

\bibitem{zamolodchikov1985two}
Alexander~B Zamolodchikov.
\newblock Two-dimensional conformal symmetry and critical four-spin correlation
  functions in the {Ashkin-Teller} model.
\newblock {\em Soviet Physics JETP}, 62(4):653--659, 1985.

\end{thebibliography}
\newcommand{\noop}[1]{} \def\cprime{$'$}

\end{document}